\documentclass[10pt]{article}

\usepackage{amsmath,amssymb,amsfonts,amsbsy,mathrsfs}

\setlength{\textwidth}{16.5cm}
\setlength{\textheight}{23.5cm} \setlength{\topmargin}{-1.2cm}
\setlength{\oddsidemargin}{0cm} \setlength{\evensidemargin}{23.5cm}
\linespread{1.3}

\renewcommand{\subsection}{\subsubsection}

\newtheorem{theorem}{Theorem}[section]

\newtheorem{lemma}{Lemma}[section]
\newtheorem{proposition}{Proposition}[section]
\newenvironment{proof}{\noindent{\bf Proof}.}{\hfill $\Box$ \vspace*{4mm}}
\newtheorem{definition}{Definition}[section]

\newtheorem{remark}{Remark}[section]

\def\nt{|\hspace{-0.7pt}|\hspace{-0.7pt}|}
\def\nl{\langle\hspace*{-2pt}\langle}
\def\nr{\rangle\hspace*{-2pt}\rangle}

\begin{document}

\begin{center}
{\LARGE \bf Local existence for the free boundary problem }\\[6pt]
{\LARGE \bf for the non-relativistic and relativistic compressible}\\[6pt]
{\LARGE \bf Euler equations with a vacuum boundary condition}
\end{center}

\vspace*{7mm}

\centerline{\large Yuri Trakhinin}

\begin{center}
Sobolev Institute of Mathematics, Koptyug av. 4, 630090 Novosibirsk, Russia\\
e-mail: trakhin@math.nsc.ru
\end{center}

\vspace*{7mm}
\centerline{\bf \large Abstract}

\noindent We study the free boundary problem for the equations of compressible Euler equations with a vacuum boundary condition. Our main goal is to recover in Eulerian coordinates the earlier well-posedness result obtained by Lindblad \cite{Lind} for the isentropic Euler equations and extend it to the case of full gas dynamics. For technical simplicity we consider the case of an unbounded domain whose boundary has the form of a graph and make short comments about the case of a bounded domain. We prove the local-in-time existence in Sobolev spaces by the technique applied earlier to weakly stable shock waves and characteristic discontinuities \cite{CS,Tr}. It contains, in particular, the reduction to a fixed domain, using the ``good unknown'' of Alinhac \cite{Al}, and a suitable Nash-Moser-type iteration scheme. A certain modification of such an approach is caused by the fact that the symbol associated to the free surface is not elliptic. This approach is still directly applicable to the relativistic version of our problem in the setting of special relativity and we briefly discuss its extension to general relativity.

\section{Introduction}
\label{s1}

Consider the compressible Euler equations with the gravitational field ${\cal G}\in \mathbb{R}^3$:
\begin{align}
& \partial_t\rho  +{\rm div}\, (\rho v )=0,  \label{1}\\[3pt]
& \partial_t(\rho v ) +{\rm div}\,(\rho v\otimes v  ) +
{\nabla}p=\rho {\cal G}, \label{2} \\[3pt]
& \textstyle \partial_t\left( \rho \left(e+  \frac{1}{2}|v|^2 \right)\right)+
{\rm div}\,  \left( \left(\rho  \left(e+\frac{1}{2}|v|^2\right) +p\right)v \right)=0,  \label{3}
\end{align}
where $\rho$ denotes density, $v\in\mathbb{R}^3$ fluid velocity, $p=p(\rho,S )$ pressure, $S$ entropy, and  $e=e(\rho,S )$ internal energy. With a state equation of gas, $p=p(\rho ,S)$, and the first principle of thermodynamics, \eqref{1}--\eqref{3} is a closed system. As the unknown we can fix, for example, the vector $ U =U (t, x )=(p, v,S)$.

We can easily symmetrize system \eqref{1}--\eqref{3} by rewriting it in the nonconservative form
\begin{equation}
\frac{1}{\rho c^2}\,\frac{{\rm d} p}{{\rm d} t} +{\rm div}\, v =0,\qquad
\rho\, \frac{{\rm d} v}{{\rm d} t} +{\nabla} p  =\rho {\cal G} ,\qquad \frac{{\rm d} S}{{\rm d} t} =0,
\label{4}
\end{equation}
where $c^2=p_\rho(\rho ,S)$ is the square of the sound velocity and ${\rm d} /{\rm d} t =\partial_t+({v} ,{\nabla} )$ (by $(\ ,\ )$ we denote the scalar product). Equations (\ref{4}) read as the symmetric quasilinear system
\begin{equation}
\label{5}
A_0(U )\partial_tU+\sum_{j=1}^3A_j(U )\partial_jU+Q(U)=0,
\end{equation}
where $Q(U)=(0,-\rho{\cal G},0)$,
\[
A_0=\left( \begin{array}{ccccc} {\displaystyle \frac{1}{\rho c^2}} & 0 & 0 & 0 & 0 \\[6pt] 0 & \rho & 0 & 0 & 0\\
0 & 0 & \rho & 0 & 0 \\
0 & 0 & 0 & \rho & 0\\
0 & 0 & 0 & 0 & 1 \end{array} \right) ,\quad
A_1=\left( \begin{array}{ccccc} {\displaystyle \frac{v_1}{\rho c^2}}&1&0&0&0\\[6pt]
 1&\rho v_1&0&0&0\\
0&0&\rho v_1&0&0\\ 0&0&0&\rho v_1&0\\0&0&0&0&v_1
\end{array} \right) ,
\]
\[
A_2=\left( \begin{array}{ccccc} {\displaystyle \frac{v_2}{\rho c^2}}&0&1&0&0\\[6pt]
 0&\rho v_2&0&0&0\\ 1&0&\rho v_2&0&0\\ 0&0&0&\rho v_2 &0\\ 0&0&0&0&v_2
\end{array} \right) ,\quad
A_3=\left( \begin{array}{ccccc} {\displaystyle \frac{v_3}{\rho c^2}}&0&0&1&0\\[6pt]
 0&\rho v_3&0&0&0\\ 0&0&\rho v_3&0&0\\ 1&0&0&\rho v_3&0 \\ 0&0&0&0&v_3
\end{array} \right) .
\]
System (\ref{5}) is symmetric hyperbolic if the the hyperbolicity condition $A_0>0$ holds:
\begin{equation}
\rho  >0,\quad p_\rho >0. \label{6}
\end{equation}
One can alternatively consider the {\it isentropic} Euler equations, i.e., system (\ref{1}), (\ref{2}) for the same variables except for the entropy $S$.  Then, the state equation of gas is $p=p(\rho )$ and the second inequality in
\eqref{6} is understood in the sense that $p\,'(\rho )>0$.

We are interested in the motion of an ideal compressible fluid (gas) body in vacuum described by the Euler equations \eqref{1}--\eqref{3} (or \eqref{1}, \eqref{2} for isentropic gas) in a space-time domain $\Omega (t)$ which boundary $\Sigma (t)=\{F(t,x)=0\}$ is to be determined and moves with the velocity of the gas particles at the boundary:
\begin{equation}
\frac{{\rm d}F }{{\rm d} t}=0,\qquad p=0 \qquad \mbox{on}\quad \Sigma (t)\label{8}
\end{equation}
(for all $t\in [0,T]$). This free boundary problem can be used for modeling the motion of the ocean or a star.
Most results for such kind of problems were earlier obtained for incompressible fluids and the history of mathematical studies of incompressible versions of problem \eqref{1}--\eqref{3}, \eqref{8} can be found, for example, in \cite{Lind}.

The first result for compressible fluids was obtained by Makino \cite{Makino1} (see also \cite{Makino}) who proved the local-in-time existence of solutions to problem \eqref{1}--\eqref{3}, \eqref{8} for the case of a polytropic gas and when the boundary condition $p=0$ in \eqref{8} is replaced by $\rho =0$. This was done by
using a special symmetrization of the gas dynamics system that supports vacuum regions. That is, the corresponding symmetric system for a new unknown $U$ (see \cite{Makino1,Makino}) is always hyperbolic without assumptions \eqref{6}.
However, employing this symmetrization leads to certain non-physical restrictions on the initial data. Therefore,
Makino's result does not cover the general case. On the other hand, from the physical point of view, the vacuum boundary condition $\rho |_{\Sigma} =0$ is, of course, more natural than $p|_{\Sigma}=0$. In particular, \eqref{6} and \eqref{8} does not formally allow the equation of state of a polytropic gas $p=a\rho^{\gamma}\exp (S/c_V)$. In this connection, as was recommended in \cite{Lind}, for the case of boundary condition $p|_{\Sigma}=0$ one can alternatively think of the pressure as a small constant on the boundary (see also Remark \ref{r1} below).

The local-in-time existence for the general case of initial data was recently proved by Lindblad \cite{Lind} for the free boundary problem with non-vanishing density on the boundary for the isentropic Euler equations.
Namely, the local-in-time existence of smooth solutions of problem \eqref{1}, \eqref{2}, \eqref{8} (with ${\cal G}=0$) was shown in \cite{Lind} under the natural physical assumption
\begin{equation}
\frac{\partial p}{\partial N}\leq -\epsilon <0\qquad \mbox{on}\quad \Sigma (0),
\label{9}
\end{equation}
where $\partial /\partial N=(\nabla F,\nabla )$, together with the hyperbolicity condition \eqref{6}, provided that the initial domain $\Omega (0)$ is diffeomorfic to a ball.
The main tool in \cite{Lind} is the passage to the Lagrangian coordinates for reducing the original problem to that in a fixed domain. Such a technique seems most natural for free boundary problems with boundary conditions like \eqref{8}. At the same time, for {\it compressible} fluids it is connected with a lot of technical difficulties and it is not quite clear how to extend the results to similar problems for more complicated fluid dynamics models like, for example,
relativistic gas dynamics or magnetohydrodynamics. Even the extension of the existence theorem in \cite{Lind} to full gas dynamics does not seem to be just a technical matter.

\begin{remark}{\rm
If the domain $\Omega (t)$ is unbounded, we should additionally assume that the velocity vanishes at infinity (as $|x|\rightarrow\infty$). As follows from the second vector equation in \eqref{4}, in the absence of gravity (${\cal G}=0$) this contradicts condition \eqref{9}. That is, in the case of an unbounded domain, the presence of gravity is {\it absolutely necessary}. However, if the domain is bounded, without loss of generality and as was done in \cite{Lind}, the gravity can be neglected as a lower order term (it plays no role in the proof of well-posedness).
}\label{r.unb}
\end{remark}

In this paper we propose another approach to studying the well-posedness of problem \eqref{1}--\eqref{3}, \eqref{8}
(or \eqref{1}, \eqref{2}, \eqref{8}) and similar free boundary problems for other systems of hyperbolic conservation laws. This approach could be probably called ``hyperbolic'' or ''shock waves'' approach because it was first applied by Blokhin (see \cite{BThand} and references therein) and Majda \cite{Majda} to prove the short-time persistence of discontinuous shock front solutions to hyperbolic conservation laws.  The ``hyperbolic'' approach to free boundary problems does not propose to pass to the Lagrangian coordinates (the more so as this is impossible for shock waves).
Instead of this we work in the Eulerian coordinates and reduce our free boundary problem to that in a fixed domain.
More precisely, such a procedure is indeed quite simple if our domain $\Omega (t)$ is unbounded and its boundary has the form of a graph. In this case we reduce our problem to that in a half-space by simple straightening of the unknown free surface (for example, a shock front). Otherwise, the technique of reduction to a fixed domain is more technically involved (see \cite{Majda}), but the resulting problem in a fixed domain has no principal differences from that for the case of unbounded domains. We can then follow standard arguments and reduce the corresponding linearized problem to a linear problem in a half-space by using a fixed partition of unity flattering the boundary. Therefore, without loss of generality we can restrict ourself to an unbounded initial domain and we do so in this paper. On the other hand, the possibility to treat unbounded domains is already a certain advantage of the ``hyperbolic'' approach.

Regarding the free boundary problem \eqref{1}--\eqref{3}, \eqref{8}, it should be noted that its linearized version is well-posed only in a weak sense. It means that the corresponding linear problem satisfies the Kreiss--Lopatinski condition but violates the uniform Kreiss--Lopatinski condition \cite{Kreiss,Majda,Met}. This yields losses of derivatives in a priori estimates for the linearized problem. Therefore, we are not able to use such estimates to prove the existence of solutions to the original nonlinear problem by the fixed-point argument as was done by Blokhin or Majda (see also \cite{Met}) for uniformly stable shock waves (the uniform Kreiss--Lopatinski condition holds for such shocks). Thus, we have to modify the ``hyperbolic'' approach to apply it to free boundary problems whose linearized versions are weakly well-posed. In some sense, this was already done in previous works. We should first mention Alinhac's study \cite{Al} of rarefaction waves for hyperbolic conservation laws. 

It is well-known that the Nash-Moser method can sometimes compensate the loss of derivatives phenomenon and to use it we should perform a genuine linearization of our nonlinear problem, i.e., to keep all the lower-order terms while linearizing. One of these terms is a first-order term for the perturbation of the free surface in the linearized interior equations. To neutralize such a bad term Alinhac proposed to pass to a new unkwnown (so-called ``good unknown'') and we use this idea for problem \eqref{1}--\eqref{3}, \eqref{8}.
Such a technique was recently applied to other hyperbolic free boundary value problems. We mean the results of Coulombel and Secchi \cite{CS}  for 2D supersonic vortex sheets and weakly stable shock waves in isentropic gas dynamics and author's result for compressible current-vortex sheets \cite{Tr1,Tr}. The local-in-time existence of the listed
weakly stable discontinuities was shown in \cite{CS,Tr} by a suitable Nash-Moser-type iteration scheme.

At last, we should note that problem \eqref{1}--\eqref{3}, \eqref{8} is not a quite standard ``weakly stable'' hyperbolic
free boundary problem like those studied in \cite{Al,CS,Tr}. Actually, regardless of the fact that the constant (``frozen'') coefficients linearized problem for \eqref{1}--\eqref{3}, \eqref{8} always satisfies the weak Kreiss--Lopatinski condition, the corresponding variable coefficients problem is not unconditionally well-posed and 
\eqref{9} is an {\it extra} condition which is necessary for well-posedness (though, the question on its necessity is a separate and non-trivial problem). This unusual feature is a consequence of the fact that the symbol associated with the free surface is {\it not elliptic} (see Remark \ref{r2}) that leads to a loss of ``control on the boundary.''  Therefore, we have to modify somewhat the energy method which we use for deriving a priori estimates for the linearized problem. Having in hand a good a priori estimate (so-called {\it tame estimate} \cite{Al}) for the linearized problem, we prove the local existence (and uniqueness) theorem for our nonlinear problem (see Theorem \ref{t2} below) by the Nash-Moser method.

Such a modified ``hyperbolic'' approach outlined above allows one to prove a counterpart of Theorem \ref{t2} for the relativistic version of problem \eqref{1}--\eqref{3}, \eqref{8} in the setting of special relativity without further modifications. Actually, the proof is absolutely the same as for the non-relativistic case and we may drop it. Since in the framework of our ``hyperbolic'' approach we use the energy method (but not the Kreiss symmetrizer technique \cite{Kreiss,Majda,Met}), the only important point is that the system of relativistic Euler equations 
\begin{equation}
\nabla_{\alpha}(\rho u^{\alpha})=0,\qquad
\nabla_{\alpha}T^{\alpha\beta}=0
\label{GR}
\end{equation}
can be symmetrized (we write down its symmetric form in the last section of the paper). Here $\nabla_{\alpha}$ is the covariant derivative with respect to the metric $g$ with the components $g_{\alpha\beta}$; $\rho$ is the particle number density in the rest frame (for convenience we use the notations that are consistent with the non-relativistic case);
\[
T^{\alpha\beta}=\rho h u^{\alpha}u^{\beta}+ p g^{\alpha\beta};
\]
$h= 1 + e + (p/\rho )$ is the specific enthalpy, $p$ is the pressure, $e= e(\rho ,S)$ is the specific internal energy per particle, $S$ is the entropy per particle, $u^{\alpha}$ are components of the four-velocity. The metric $g$ should satisfy the Einstein equations. Following \cite{Rend} (see also \cite{FR}), in the last section of the paper we write down them in so-called harmonic coordinates. In the case of special relativity $g={\rm diag}\,(-1,1,1,1)$ and equations \eqref{GR} (in the presence of gravity) take the form 
\begin{align}
& \partial_t(\rho\Gamma ) +{\rm div}\, (\rho u )=0,  \label{SR1}\\[3pt]
& \partial_t(\rho h \Gamma u ) +{\rm div}\,(\rho h u\otimes u  ) +
{\nabla}p=\rho{\cal G} , \label{SR2} \\[3pt]
& \partial_t(\rho h\Gamma^2-p ) +{\rm div}\, (\rho h\Gamma u )=0,  \label{SR3}
\end{align}
where 
\[
t:=x^0,\quad {\rm div} :={\rm div}_x,\quad x=(x^1,x^2,x^3),\quad u=(u^1,u^2,u^3),\quad v=(v^1,v^2,v^3)=u/\Gamma,\quad
\Gamma^2 =1+|u|^2;
\]
$\Gamma =u^0=(1-|v|^2)^{-1/2}$ is the Lorentz factor, and the speed of the light is equal to unity.

Regarding the free boundary problem for relativistic fluids with a vacuum boundary condition, its local-in-time existence was proved by Rendall \cite{Rend} for the boundary condition $\rho|_{\Sigma} =0$ and a special class of initial data by generalizing Makino's symmetrization \cite{Makino1,Makino} to the relativistic case. This result was obtained for the setting of general relativity and under the simplifying assumption that the relativistic fluid
is isentropic. Actually, in the framework of Makino's approach this assumption was just a technical simplification. That is, our main goal in this paper is to cover the general case of initial data but for the boundary condition $p|_{\Sigma} =0$. 

As was already noted above, we do not almost need to make efforts for extending Theorem \ref{t2} to the relativistic Euler equations in the setting of special relativity. Concerning the case of general relativity, the proof of the existence theorem is based on using harmonic coordinates and the facts that the Einstein equations for the metric $g$ can be written in the form of a symmetric hyperbolic system \cite{Rend} and the metric should be smooth on the fluid-vacuum boundary $\Sigma$. More precisely, for the relativistic Euler equations we easily obtain a counterpart of Theorem \ref{t2} for any fixed metric, but not only for $g={\rm diag}\,(-1,1,1,1)$.  Then, roughly speaking, we resolve the relativistic Euler equations by Nash-Moser iterations whereas at each Nash-Moser iteration step we find the metric from the Einstein equations by Picard iterations. Actually, we do not even need to write down Picard iterations because we
know that a unique solution to the Einstein equations (for fixed fluid unknowns) written in the form of a symmetric hyperbolic system does exist and this is proved by the classical fixed-point argument. Since it makes probably sense to devote a separate paper to the case of general relativity we restrict ourself to a schematic proof of the existence theorem. Moreover, we do not even formally write down such a theorem in this paper.

The plan of the rest of the paper is the following. In Section \ref{s3}, we reduce problem \eqref{1}--\eqref{3}, \eqref{8} to that in a fixed domain and state the existence Theorem \ref{t2} for the reduced problem. In Section \ref{s3} we also formulate the linearized problem and prove its well-posedness under suitable assumptions on the basic state about which we linearize our nonlinear problem \eqref{1}--\eqref{3}, \eqref{8}. The main of these assumptions is the physical condition (\ref{9}). In Section \ref{s4}, for the linearized problem we derive an a priori tame estimate in the Sobolev spaces $H^s$ with $s\geq 3$. In Section \ref{s5}, we first specify compatibility conditions for the initial data and, by constructing an approximate solution, reduce our problem to that with zero initial data. Then, we solve the reduced problem by a suitable Nash-Moser-type iteration scheme. At last, in Section \ref{s6} we describe extensions of the result of Theorem \ref{t2} to special and general relativity.

\section{Basic a priori estimate for the linearized problem}
\label{s3}

For technical simplicity (see Remark \ref{r1'} below), we assume that the space-time domain $\Omega (t)$ is unbounded and lies from one side of its free boundary $\Sigma (t)$ which has the form of a graph,
$x_1=\varphi (t,x')$, $x'=(x_2,x_3)$. That is,  
\begin{equation}
\Omega (t)=\left\{x_1>\varphi (t,x')\right\}\label{10}
\end{equation}
and the function $\varphi (t,x')$ is to be determined.
As for shock waves, using Majda's arguments \cite{Majda} , we can generalize the technique below to the case of an arbitrary compact free surface $\Sigma$. The mapping of $\Omega (t)$ to a fixed domain is just more technically involved when $\Omega (t)$ is bounded (see Remark \ref{r1'}).

For domain \eqref{10} the boundary conditions \eqref{8} take the form
\begin{equation}
\partial_t\varphi =v_N,\qquad p=0 \qquad \mbox{on}\quad \Sigma (t),\label{11}
\end{equation}
and the gravitational field
\[
{\cal G}= ( G , 0 , 0),
\]
where $v_N=(v,N)$, $N=(1,-\partial_2\varphi ,-\partial_2\varphi )$, and $G$ denotes Newton's gravitational constant.
Our final goal is to find conditions on the initial data
\begin{equation}
U (0,x)=U_0(x),\quad x\in \Omega (0),\qquad \varphi (0,x')=\varphi_0(x'),\quad x'\in\mathbb{R}^2,\label{12}
\end{equation}
providing the existence of a smooth solution $(U ,\varphi )$ of the free boundary value problem (\ref{5}),
(\ref{11}), (\ref{12}) in $\Omega (t)$ for all $t\in [0,T]$, where the time $T$ is small enough.

To reduce the free boundary value problem (\ref{5}), (\ref{11}), (\ref{12}) to that in a fixed domain we straighten, as usual, the unknown free surface $\Sigma$. That is, the unknown $U$ being smooth in $\Omega (t)$ is replaced by the vector-function
\[
\widetilde{U}(t,x ):= U (t,\Phi (t, x),x'),
\]
that is smooth in the fixed domain $\mathbb{R}^3_+=\{x_1>0,\ x'\in \mathbb{R}^2\}$ , where
$\Phi (t,0,x')=\varphi (t,x')$ and $\partial_1\Phi >0$.  As in \cite{Tr}, to avoid assumptions about compact support of the initial data in the nonlinear existence theorem and work globally in $\mathbb{R}^3_+$ we use the choice of 
$\Phi (t,x )$ similar to that suggested by M\'etivier \cite{Met}:
\[
\Phi(t,x ):= x_1+\Psi(t,x ),\quad \Psi(t,x ):= \chi (x_1)\varphi (t,x'),
\]
where $\chi\in C^{\infty}_0(\mathbb{R})$ equals to 1 on $[0,1]$, and $\|\chi'\|_{L_{\infty}(\mathbb{R})}<1/2$. Then, the fulfillment of the requirement $\partial_1\Phi >0$ is guaranteed
if we consider solutions for which $\|\varphi \|_{L_{\infty}([0,T]\times\mathbb{R}^2)}\leq 1$. The last is fulfilled if,
without loss of generality, we consider the initial data satisfying $\|\varphi_0\|_{L_{\infty}(\mathbb{R}^2)}\leq 1/2$,
and the time $T$ in our existence theorem is sufficiently small.

Dropping for convenience tildes in $\widetilde{U}$, we reduce (\ref{5}), (\ref{11}), (\ref{12}) to the initial boundary value problem
\begin{equation}
\mathbb{L}(U,\Psi)=0\quad\mbox{in}\ [0,T]\times \mathbb{R}^3_+,\label{13}
\end{equation}
\begin{equation}
\mathbb{B}(U,\varphi )=0\quad\mbox{on}\ [0,T]\times\{x_1=0\}\times\mathbb{R}^{2},\label{14}
\end{equation}
\begin{equation}
U|_{t=0}=U_0\quad\mbox{in}\ \mathbb{R}^3_+,\qquad \varphi|_{t=0}=\varphi_0\quad \mbox{in}\ \mathbb{R}^{2},\label{15}
\end{equation}
where $\mathbb{L}(U,\Psi)=L(U,\Psi)U +Q(U)$,
\[
L(U,\Psi)=A_0(U)\partial_t +\widetilde{A}_1(U,\Psi)\partial_1+A_2(U )\partial_2+A_3(U )\partial_3,
\]
\[
\widetilde{A}_1(U,\Psi )=\frac{1}{\partial_1\Phi}\Bigl(
A_1(U )-A_0(U)\partial_t\Psi-\sum_{k=2}^3A_k(U)\partial_k\Psi \Bigr)
\]
($\partial_1\Phi= 1 +\partial_1\Psi$), and (\ref{14}) is the compact form of the boundary conditions
\[
\partial_t\varphi-v_N=0,\qquad p=0 \qquad\mbox{on}\ [0,T]\times\{x_1=0\}\times\mathbb{R}^{2}.
\]

We are now in a position to state the local-in-time existence theorem for problem \eqref{13}--\eqref{15}. Clearly, this theorem implies a corresponding theorem for the original problem (\ref{5}), (\ref{11}), (\ref{12}).

\begin{theorem} Let $m\in\mathbb{N}$ and $m\geq 6$. Suppose the initial data (\ref{13}), with
\[
(U_0-\check{U},\varphi_0)\in H^{m+7}(\mathbb{R}^3_+)\times H^{m+7}(\mathbb{R}^2)\quad \mbox{and}\quad
\rho(p_0,S_0)-\epsilon_1\in H^{m+7}(\mathbb{R}^3_+),
\]
satisfy the hyperbolicity condition \eqref{6} for all $x\in\overline{\mathbb{R}^3_+}$  and are compatible up to order $m+7$ in the sense of Definition \ref{d1}. Here 
\[
\check{U}= (2\epsilon x_1,0,0,0,0),\quad \epsilon_1=2\epsilon /G\quad  (\epsilon ={\rm const} >0).
\] 
Let also the initial data satisfy the physical condition 
\begin{equation}
\partial_1p \geq \epsilon >0\qquad\mbox{at}\quad x_1=0
\label{16}
\end{equation}
for all $x'\in \mathbb{R}^2$. Then, there exists a sufficiently short time $T>0$ such that problem (\ref{13})--(\ref{15}) has a unique solution
\[
(U,\varphi )\in \left\{\check{U}+H^m([0,T]\times\mathbb{R}^3_+)\right\}\times H^m ([0,T]\times\mathbb{R}^2).
\]
Moreover, $\rho -\epsilon_1\in H^{m}([0,T]\times\mathbb{R}^3_+)$.
\label{t2}
\end{theorem}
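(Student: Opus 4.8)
The plan is to carry out the Nash--Moser programme sketched in the introduction. First I would make precise the compatibility conditions of Definition~\ref{d1}: using the interior equations \eqref{13} to express the traces $\partial_t^kU|_{t=0}$ recursively through spatial derivatives of $(U_0,\varphi_0)$ and matching them with the boundary conditions, one builds from data compatible up to order $m+7$ a smooth \emph{approximate solution} $(U^a,\varphi^a)$, with $U^a-\check U\in H^{m+7}(\mathbb{R}^3_+)$ and $\varphi^a\in H^{m+7}(\mathbb{R}^2)$, such that $\mathbb{L}(U^a,\Psi^a)$ vanishes to order $m+6$ at $t=0$, the boundary conditions \eqref{14} hold, and the structural constraints --- hyperbolicity \eqref{6}, $\partial_1\Phi^a>0$, and $\partial_1 p^a\ge\epsilon/2$ on $\{x_1=0\}$ --- hold on $[0,T]\times\mathbb{R}^3_+$ provided $T$ is small. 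Writing $U=U^a+V$, $\varphi=\varphi^a+\psi$ then reduces \eqref{13}--\eqref{15} to
\[
\mathcal{L}(V,\psi):=\mathbb{L}(U^a+V,\Psi^a+\psi)-\mathbb{L}(U^a,\Psi^a)=f^a,\qquad \mathcal{B}(V,\psi)=0,\qquad (V,\psi)|_{t=0}=0,
\]
where $f^a=-\mathbb{L}(U^a,\Psi^a)$ is flat at $t=0$ and can be made small in $H^{m+6}$ by shrinking $T$; it remains to solve this problem for $(V,\psi)$.

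The engine of the proof is a \emph{tame a priori estimate} for the linearization of $(\mathcal{L},\mathcal{B})$ about a basic state $(\hat U,\hat\varphi)$, a small perturbation of $(U^a,\varphi^a)$. Linearizing produces, besides the symmetric hyperbolic operator $L(\hat U,\hat\Psi)$ applied to $\delta U$, a zero-th order term and a troublesome first-order term in $\delta\Psi=\chi(x_1)\delta\varphi$ with coefficient proportional to $\partial_1\hat U$. Following Alinhac~\cite{Al} I would pass to the \emph{good unknown}
\[
\dot U:=\delta U-\frac{\partial_1\hat U}{\partial_1\hat\Phi}\,\delta\Psi ,
\]
in which the interior system becomes $L(\hat U,\hat\Psi)\dot U+\mathcal{C}(\hat U,\hat\Psi)\dot U=\mathcal{F}$ with $\mathcal{C}$ of order zero and $\delta\varphi$ entering only through $\mathcal{F}$, while --- using that $\hat p=0$ and $\partial_1\hat\Phi=1$ on $\{x_1=0\}$ --- the boundary conditions take the form $\partial_t\delta\varphi-\delta v_N=g_1$ and $\dot p|_{x_1=0}=-(\partial_1\hat p)\,\delta\varphi+g_2$.

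The derivation of the tame estimate is the step I expect to be the main obstacle. The energy method for $\dot U$ --- multiplying the interior system by $2\dot U$ and integrating over $\mathbb{R}^3_+$ --- gives $\frac{d}{dt}\int_{\mathbb{R}^3_+}(A_0\dot U,\dot U)\,dx$ up to interior terms that are routinely controlled, plus a boundary term $\int_{\{x_1=0\}}(\widetilde A_1\dot U,\dot U)\,dx'$; since $\widetilde A_1$ is not sign-definite this term yields no ``control on the boundary'' --- precisely the non-ellipticity of the free-surface symbol of Remark~\ref{r2}. Using the boundary conditions, however, the boundary quadratic form reduces, up to lower-order terms, to a multiple of $\frac{d}{dt}\int_{\{x_1=0\}}(\partial_1\hat p)(\delta\varphi)^2\,dx'$, and here the physical condition \eqref{16} guarantees $\partial_1\hat p>0$, hence the coercivity of this term (with the opposite sign one is in the Rayleigh--Taylor regime). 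This produces an $L^2$-bound on $\delta\varphi|_{x_1=0}$, which the evolution equation $\partial_t\delta\varphi=\delta v_N+g_1$ upgrades to a bound on $\delta\varphi$. Differentiating the system and boundary conditions tangentially, commuting, recovering normal derivatives of $\dot U$ from the equations, and estimating nonlinear products by Moser-type inequalities then yields, for $s$ in the relevant range, a tame estimate
\[
\|\dot U\|_{H^s}+\|\delta\varphi\|_{H^s}\lesssim \|\mathcal{F}\|_{H^{s+1}}+\|g\|_{H^{s+1}}+\bigl(\|\hat U-\check U\|_{H^{s+2}}+\|\hat\varphi\|_{H^{s+2}}\bigr)\bigl(\|\mathcal{F}\|_{H^3}+\|g\|_{H^3}\bigr),
\]
with a fixed finite loss of derivatives; this is the content of Section~\ref{s4}.

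With the tame estimate available, the solution is produced by the now-standard Nash--Moser machinery as in \cite{CS,Tr}: one chooses a family of smoothing operators $\{S_\theta\}_{\theta\ge1}$ on the anisotropic Sobolev scale, compatible with traces on $\{x_1=0\}$ and with flatness at $t=0$, and defines iterates $(V_{n+1},\psi_{n+1})$ by solving the linearized problem about the smoothed state $(U^a+S_{\theta_n}V_n,\varphi^a+S_{\theta_n}\psi_n)$ with right-hand side assembled from the accumulated quadratic, substitution and smoothing errors. An induction on $n$ shows that the iterates stay in a fixed ball in an intermediate norm $H^{s_0}$ --- so that \eqref{6}, $\partial_1\Phi>0$ and $\partial_1 p\ge\epsilon/2$ on the boundary are preserved for $T$ small --- while the higher norms grow geometrically at a controlled rate, whence $\sum_n(V_{n+1}-V_n,\psi_{n+1}-\psi_n)$ converges in $H^m([0,T]\times\mathbb{R}^3_+)\times H^m([0,T]\times\mathbb{R}^2)$ for $m\ge6$; the limit solves the reduced problem, and $(U,\varphi)=(U^a+V,\varphi^a+\psi)$ solves \eqref{13}--\eqref{15}, with $\rho-\epsilon_1\in H^m$ since $\rho=\rho(p,S)$ and $\rho$ equals $\epsilon_1$ on the reference state. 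Uniqueness follows from an ordinary (non-tame) energy estimate for the difference of two solutions, again invoking \eqref{16}.
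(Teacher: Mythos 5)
Your outline follows the same overall route as the paper (approximate solution from the compatibility conditions, Alinhac's good unknown, an energy estimate in which the physical condition \eqref{16} supplies the missing control of the front, a tame estimate, Nash--Moser), but two essential ingredients are missing. The first concerns the tame estimate itself: your plan to close it by ``recovering normal derivatives of $\dot U$ from the equations'' fails, because the problem has a \emph{characteristic} boundary. Owing to the constraint \eqref{20} one has $\hat{\mathfrak f}|_{x_1=0}=0$, so the boundary matrix $\widetilde A_1(\widehat U,\widehat\Psi)$ is singular at $x_1=0$ (constant rank $2$, cf.\ \eqref{30}); inverting the system only yields $\partial_1$ of the noncharacteristic pair $(\dot p,\dot v_n)$, as in \eqref{44}, while the normal derivatives of $(\dot v_2,\dot v_3,\dot S)$ cannot be obtained this way uniformly up to the boundary. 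The paper compensates for this loss by estimating the missing normal derivatives through the entropy transport equation \eqref{49} and a linearized vorticity system \eqref{50}, both pure transport along $\hat w$ whose first component vanishes at $x_1=0$ (again by \eqref{20}) so that no boundary conditions are needed; without this (or a retreat to anisotropic weighted spaces, which would change the statement) your $H^s$ estimate is not established. Relatedly, in the Nash--Moser scheme you linearize about the merely smoothed state, but smoothing destroys the constraint \eqref{20}; one needs the modified state $U_{n+1/2}$ of Proposition \ref{p3} before the tame estimate of Theorem \ref{t4} may be applied at each step.

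The second gap is that you never use gravity, whereas in this unbounded geometry it is precisely what legitimizes your reduction to a problem with zero initial data. Since $\check U=(2\epsilon x_1,0,0,0,0)$ does not decay, $\mathbb L(U^a,\Psi^a)$ contains the non-decaying term $2\epsilon/\partial_1\Phi^a$ in the momentum component, and $f^a=-\mathbb L(U^a,\Psi^a)$ lies in a Sobolev space only because this term cancels against the gravity source $G\rho^a$ up to $-G(\rho^a-\epsilon_1)-2\epsilon\,\partial_1\Psi^a/\partial_1\Phi^a$ (Remark \ref{rcheck}); this is exactly where the hypotheses $\epsilon_1=2\epsilon/G$ and $\rho(p_0,S_0)-\epsilon_1\in H^{m+7}(\mathbb R^3_+)$ enter, and it is also the real reason why $\rho-\epsilon_1\in H^m$ for the solution --- your closing claim that ``$\rho$ equals $\epsilon_1$ on the reference state'' is not among the hypotheses and does not substitute for this cancellation. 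Minor further points: the linearized kinematic condition also carries the terms $\hat v_2\partial_2\varphi+\hat v_3\partial_3\varphi-\varphi\,\partial_1\hat v_N$ (and, in the shifted variables actually used, the coefficient of $\varphi$ in the pressure condition is $2\epsilon+\partial_1\hat p$), which must be kept in the boundary energy identity; and your derivative counts (loss of two derivatives on the coefficients, loss of one derivative from the interior source) differ from \eqref{38} --- losing a derivative from $f$ is harmless (Remark \ref{rough}), but the bookkeeping has to be redone consistently to land on the stated regularity $m+7\to m$.
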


\begin{remark}{\rm 
The hyperbolicity condition \eqref{6} which should be satisfied for all $x\in\overline{\mathbb{R}^3_+}$ implies that the function $\rho_0(x)=\rho (p_0,S_0)(x)$ cannot vanish at infinity. Indeed, in Theorem \ref{t2} we assume that $\rho_0- \epsilon_1\in H ^{m+7}(\mathbb{R}^3_+)$. On the other hand, \eqref{6} together with the boundary condition $p|_{\Sigma}=0$ do not formally allow the equation of state of a polytropic gas (or a $\gamma$--law gas for isentropic gas dynamics). However, as was noted in \cite{Lind}, from a physical point of view we can alternatively think of the pressure as a small positive constant $\varepsilon$ on the boundary. One can easily generalize the result of Theorem \ref{t2} to the case of the boundary condition $p|_{x_1=0}=\varepsilon$. More precisely, we now assume that $U_0-\check{U}- C_0 \in H ^{m+7}(\mathbb{R}^3_+)$ and prove that $U -\check{U}-C_0 \in H ^{m}([0,T]\times\mathbb{R}^3_+)$, where
$C_0=(\varepsilon , 0,0,0,0)$. Indeed, making the change of unknown $p'=p-\varepsilon$ and omitting the primes, we obtain problem (\ref{13})--(\ref{15}) with the matrices $A_{\alpha}(U+C_0)$. The further arguments are almost the same as in the proof of Theorem \ref{t2} (see below).
}
\label{r1}
\end{remark}

\begin{remark}{\rm 
Inequality \eqref{16} is a counterpart of the physical condition \eqref{9} for the unbounded domain \eqref{10}. 
If the domain is bounded and its initial boundary $\Sigma (0)$  is a compact co-dimension one surface in $\mathbb{R}^3$, we can follow Majda's arguments \cite{Majda} (see also \cite[sect. 12.4.2]{BS}). More precisely, we can make ({\it locally in time}) a change of variables that sends all boundary locations $\Sigma (t)$ to the initial surface $\Sigma (0)$. We refer the reader to \cite{Majda,BS} for details of such a change of variables. In particular, it requires the application of the Weingarten map while writing down boundary conditions on $\Sigma (0)$.
The resulting initial boundary value problem is a problem in the fixed domain $\Omega (0)$.
Its principal difference from problem (\ref{13})--(\ref{15}) is that we have to deal with a problem
in a fixed compact domain instead of a half-space. For this problem the proof of a counterpart of Theorem \ref{t2} is more technical, but the ideas are basically the same as for Theorem \ref{t2}. For instance, we should reduce the corresponding linearized problem to that in a half-space by using a fixed partition of unity flattering the boundary. The resulting linearized problem in a half-space will not have principal differences from the linearized problem for (\ref{13})--(\ref{15}). Only its coefficients will be more technically complicated than those for the linearization of (\ref{13})--(\ref{15}). Therefore, as is usually done for shock waves or other types or strong discontinuities (see, e.g., \cite{BS,BThand,Met}), in this paper we restrict ourself to the case of an unbounded domain
whose boundary has a form of a graph.
}
\label{r1'}
\end{remark}

The existence of solutions in Theorem \ref{t2} will be proved by Nash-Moser iterations. 
The main tool for proving the convergence of the Nash-Moser iteration scheme is a so-called tame estimate \cite{Al,CS,Tr} for the linearized problem. In this section, we derive a basic a priori $L_2$--estimate for the linearized problem by the energy method. This estimate is a basis for deriving the tame estimate in Sobolev spaces (see the next section) and implies {\it uniqueness} of a solution to the nonlinear problem \eqref{13}--\eqref{15} that
can be proved by standard argument.

Let us first pass to the new unknown $U'=(p',v,S)=U-\check{U}$. For $U'$ system \eqref{13} is rewritten as
\[
\mathbb{L}'(U',\Psi):=L(U'+\check{U},\Psi )U' +{A}_{\nu} (U'+\check{U},\Psi)\partial_1\check{U}+Q(U'+\check{U})=0,
\]
where $\partial_1\check{U}=( 2\epsilon , 0 ,0, 0, 0)$. Let 
$\rho' (p',S):=\rho (p,S)$, $A'_{\alpha}(U'):=A_{\alpha}(U)$, $Q'(U'):=Q(U)$, and $U'_0:=U_0-\check{U}$. Then, omitting the primes, for the new unknown we get the system
\begin{equation}
\mathbb{L}(U,\Psi):=L(U,\Psi )U +{A}_{\nu} (U,\Psi)\partial_1\check{U}+Q(U)=0\quad\mbox{in}\ [0,T]\times \mathbb{R}^3_+\label{newL}
\end{equation}
with the boundary conditions \eqref{14} and the initial data \eqref{15}. From now on we will work with problem
\eqref{newL}, \eqref{14}, \eqref{15}. We should now prove the existence of its solution, $U \in H^m([0,T]\times\mathbb{R}^3_+)$, assuming that $U_0\in H^{m+7}(\mathbb{R}^3_+)$. For the initial data for the new unknown we assume that
\begin{equation}
\partial_1p|_{x_1=0}  > -\epsilon \quad \forall \ x'\in \mathbb{R}^2.
\label{16'}
\end{equation}
This guarantees the fulfillment of assumption \eqref{16} for the original unknown.

\begin{remark}{\rm 
We easily compute the boundary matrix:
\[
\widetilde{A}_1({U},{\Psi})=\frac{1}{\partial_1{\Phi}}
\left(
\begin{array}{ccccc} 
{\displaystyle \frac{{\mathfrak{f}}}{{\rho} {c}^2}}&1&-\partial_2{\Psi}&-\partial_3{\Psi}&0\\[6pt]
 1&{\rho} {\mathfrak{f}}&0&0&0\\
-\partial_2{\Psi}&0&{\rho} {\mathfrak{f}}&0&0\\ -\partial_3{\Psi}&0&0&{\rho} {\mathfrak{f}}&0\\0&0&0&0&{\mathfrak{f}}
\end{array}\right),
\]
where ${\mathfrak{f}}={v}_1-{v}_2\partial_2{\Psi}-{v}_3\partial_3{\Psi}-\partial_t{\Psi}$.
The vector-function $\widetilde{A}_1(U,{\Psi}^a)\partial_1\check{U}$ cannot belong to a Sobolev space on $\mathbb{R}^3_+$ because its second component is $2\epsilon/ (\partial_1\Phi)$. However, if problem \eqref{newL}, \eqref{14}, \eqref{15} has a solution from a Sobolev space and Theorem \ref{t2} takes place, then the sum $\widetilde{A}_1(U,{\Psi})\partial_1\check{U}+Q(U)$ 
already belongs to a Sobolev space because
\[
\frac{2\epsilon}{\partial_1\Phi}- G\rho = -G (\rho -\epsilon_1)-2\epsilon\frac{\partial_1\Psi}{\partial_1\Phi}\in
H^{m}([0,T]\times\mathbb{R}^3_+).
\]
Thus, for our case of an unbounded domain the presence of gravity is of great importance (see also Remark \ref{r.unb}).}
\label{rcheck}
\end{remark}

We now formulate the linearized problem. Consider 
\[
\Omega_T:= (-\infty, T]\times\mathbb{R}^3_+,\quad \partial\Omega_T:=(-\infty ,T]\times\{x_1=0\}\times\mathbb{R}^{2}.
\]
Let
\begin{equation}
(\widehat{U}(t,x ),\hat{\varphi}(t,{x}'))\in W^2_{\infty}(\Omega_T)\times W^2_{\infty}(\partial\Omega_T)
\label{17}
\end{equation}
be a given sufficiently smooth vector-function, with $\widehat{U}=(\hat{p},\hat{v},\widehat{S})$, and
\begin{equation}
\|\widehat{U}\|_{W^2_{\infty}(\Omega_T)}+\|\hat{\varphi}\|_{W^2_{\infty}(\partial\Omega_T)} \leq K,
\label{18}
\end{equation}
where $K>0$ is a constant. Moreover, without loss of generality we assume that $\|\hat{\varphi}\|_{L_{\infty}(\partial\Omega_T)}<1$. This implies $\partial_1\widehat{\Phi}\geq 1/2$,
with $\widehat{\Phi}(t,x ):= x_1 +\widehat{\Psi}(t,x )$, $\widehat{\Psi}(t,x ):=\chi( x_1)\hat{\varphi}(t,x')$.
We also assume that the basic state (\ref{17}) about which we shall linearize problem \eqref{newL}, \eqref{14} satisfies the hyperbolicity condition (\ref{6}) in $\overline{\Omega_T}$,
\begin{equation}
\rho (\hat{p},\widehat{S})  >0,\quad \rho_p(\hat{p},\widehat{S})  >0 ,
\label{19}
\end{equation}
the first boundary condition in (\ref{14}),
\begin{equation}
\partial_t\hat{\varphi}-\hat{v}_{N}|_{x_1=0}=0,
\label{20}
\end{equation}
and the assumption \eqref{16'},
\begin{equation}
\partial_1\hat{p}|_{x_1=0}> - \epsilon ,
\label{21}
\end{equation}
where $\hat{v}_{N}=\hat{v}_1-\hat{v}_2\partial_2\hat{\varphi}-\hat{v}_3\partial_3\hat{\varphi}$.

The linearized equations for \eqref{newL} and (\ref{14}) for determining small perturbations 
$(\delta U,\delta \varphi )$ read (below we drop $\delta$):
\[
\mathbb{L}'(\widehat{U},\widehat{\Psi})(U,\Psi)
:=
L(\widehat{U},\widehat{\Psi})U +{\cal C}(\widehat{U},\widehat{\Psi})
U -   \bigl\{L(\widehat{U},\widehat{\Psi})\Psi\bigr\}\frac{\partial_1(\widehat{U}+\check{U})}{\partial_1\widehat{\Phi}}
=f,
\]
\[
\mathbb{B}'(\widehat{U},\hat{\varphi})(U,\varphi ):=
\left(
\begin{array}{c}
\partial_t\varphi +\hat{v}_2\partial_2\varphi+\hat{v}_3\partial_3\varphi -v_{N}\\[6pt]
p
\end{array}
\right)=g,
\]
where $v_{N}=v_1-v_2\partial_2\hat{\varphi}-v_3\partial_3\hat{\varphi}$, and the matrix
${\cal C}(\widehat{U},\widehat{\Psi})$ is determined as follows:
\begin{multline*}
{\cal C}(\widehat{U},\widehat{\Psi})U
= (U ,\nabla_uA_0(\widehat{U} ))\partial_t\widehat{U}
 +(U ,\nabla_u{A}_{\nu} (\widehat{U},\widehat{\Psi}))\partial_1\widehat{U} \\
+ \sum_{k=2}^3(U ,\nabla_uA_k(\widehat{U} ))\partial_k\widehat{U}
+\left( \begin{array}{c} 0 \\ -g\rho_p(\hat{p},\widehat{S})p -g
\rho_S(\hat{p},\widehat{S})S \\ 0 \\ 0 \\ 0 \end{array}\right)
.\qquad
\end{multline*}
\[
(Y ,\nabla_y A(\widehat{U})):=\sum_{i=1}^5y_i\left.\left(\frac{\partial A (Y )}{
\partial y_i}\right|_{Y =\widehat{U}}\right),\quad Y =(y_1,\ldots ,y_5).
\]
Here, as usual, we introduce the source terms $f=(f_1,\ldots ,f_5)$ and $g=(g_1,g_2)$ to make the interior equations and the boundary conditions inhomogeneous. 

The differential operator $\mathbb{L}'(\widehat{U},\widehat{\Psi})$ is a first order operator in
$\Psi=\chi (x_1)\varphi (t,x')$. Following Alinhac \cite{Al} and introducing the ``good unknown''
\begin{equation}
\dot{U}:=U -\frac{\Psi}{\partial_1\widehat{\Phi}}\,\partial_1(\widehat{U}+\check{U}),
\label{22}
\end{equation}
we simplify the linearized interior equations:
\begin{equation}
L(\widehat{U},\widehat{\Psi})\dot{U} +{\cal C}(\widehat{U},\widehat{\Psi})
\dot{U} - \frac{\Psi}{\partial_1\widehat{\Phi}}\,\partial_1\bigl\{\mathbb{L}
(\widehat{U},\widehat{\Psi})\bigr\}=f.
\label{23}
\end{equation}
As in \cite{Al,CS,Tr1,Tr}, we drop the zero-order term in $\Psi$ in (\ref{23}) and consider the effective linear operators
\begin{equation}
\begin{array}{l}
\mathbb{L}'_e(\widehat{U},\widehat{\Psi})\dot{U} :=L(\widehat{U},\widehat{\Psi}) \dot{U}+{\cal C}(\widehat{U},\widehat{\Psi})\dot{U}
\\[6pt]
\qquad\qquad =A_0(\widehat{U})\partial_t\dot{U}
 +\widetilde{A}_1(\widehat{U},\widehat{\Psi})
\partial_1\dot{U}+A_2(\widehat{U})\partial_2\dot{U}+A_3(\widehat{U})\partial_3\dot{U}
+{\cal C}(\widehat{U},\widehat{\Psi})\dot{U}
\end{array}
\label{24}
\end{equation}
In the subsequent nonlinear analysis the dropped term in (\ref{23}) will be considered as an error term at each Nash-Moser iteration step.

Regarding the boundary differential operator
$\mathbb{B}'$, in terms of unknown (\ref{22}) it reads:
\begin{equation}
\mathbb{B}'_e(\widehat{U},\hat{\varphi})(\dot{U},\varphi ):=
\mathbb{B}'(\widehat{U},\hat{\varphi})(U,\varphi ) =\left(
\begin{array}{c}
\partial_t\varphi+\hat{v}_2\partial_2\varphi+\hat{v}_3\partial_3\varphi-\dot{v}_{N}-
\varphi\,\partial_1\hat{v}_{N}\\[6pt]
\dot{p} + \varphi (2\epsilon +\partial_1\hat{p})
\end{array}\right),
\label{25}
\end{equation}
where $\dot{v}_{\rm N}=\dot{v}_1-\dot{v}_2\partial_2\hat{\varphi}-\dot{v}_3\partial_3\hat{\varphi}$.
Thus, the linear problem for $(\dot{U},\varphi )$ has the form
\begin{align}
\mathbb{L}'_e(\widehat{U},\widehat{\Psi})\dot{U}={f}\quad\mbox{in}\ \Omega_T, \label{26}\\[3pt]
\mathbb{B}'_e(\widehat{U},\hat{\varphi})(\dot{U},\varphi )={g}\quad\mbox{on}\ \partial\Omega_T,\label{27}\\[3pt]
(\dot{U},\varphi )=0\quad \mbox{for}\ t<0,\label{28}
\end{align}
where $f$ and $g$ vanish in the past. We consider the case of zero initial data, that is usual assumption, and postpone the case of nonzero initial data to the nonlinear analysis (construction of a so-called approximate solution).

On the basic state the boundary matrix $\widetilde{A}_1$ has the form
\[
\widetilde{A}_1(\widehat{U},\widehat{\Psi})=\frac{1}{\partial_1\widehat{\Phi}}
\left(
\begin{array}{ccccc} 
{\displaystyle \frac{\hat{\mathfrak{f}}}{\hat{\rho} \hat{c}^2}}&1&-\partial_2\widehat{\Psi}&-\partial_3\widehat{\Psi}&0\\[6pt]
 1&\hat{\rho} \hat{\mathfrak{f}}&0&0&0\\
-\partial_2\widehat{\Psi}&0&\hat{\rho} \hat{\mathfrak{f}}&0&0\\ -\partial_3\widehat{\Psi}&0&0&\hat{\rho} \hat{\mathfrak{f}}&0\\0&0&0&0&\hat{\mathfrak{f}}
\end{array}\right),
\]
where
\[
\hat{\rho}=\rho (\hat{p},\widehat{S}), \quad \hat{c}^2=\rho_p(\hat{p},\widehat{S}),\quad \hat{\mathfrak{f}}=\hat{v}_1-\hat{v}_2\partial_2\widehat{\Psi}-\hat{v}_3\partial_3\widehat{\Psi}-\partial_t\widehat{\Psi}.
\]
In view of (\ref{20}),
\[
\hat{\mathfrak{f}}|_{x_1=0}= \hat{v}_{N}|_{x_1=0}-\partial_t\hat{\varphi}=0.
\]
We see that the boundary matrix $\widetilde{A}_1(\widehat{U},\widehat{\Psi})$
is singular on the boundary $x_1=0$ (it is of constant rank 2 at $x_1=0$). That is, \eqref{26}--\eqref{28} is a hyperbolic problem with {\it characteristic} boundary of constant multiplicity. 

It is convenient to separate ``characteristic'' and ``noncharacteristic'' unknowns. For this purpose we introduce the new unknown
\[
V=(\dot{p},\dot{v}_n,\dot{v}_2,\dot{v}_3,\dot{S}),
\]
where $\dot{v}_n=\dot{v}_1-\dot{v}_2\partial_2\widehat{\Psi}-\dot{v}_3\partial_3\widehat{\Psi}$ ($\dot{v}_n|_{x_1=0}=
\dot{v}_N|_{x_1=0}$). We have $\dot{U}=JV$, with
\[
J=\left(
\begin{array}{ccccc}
1 & \; 0 & 0 & 0 &  0 \\
0 & \; 1 & \;\partial_2\widehat{\Psi} & \partial_3\widehat{\Psi} &  0 \\
0 & \; 0 & 1 & 0 &  0 \\
0 &\; 0 & 0 & 1 &  0 \\
0 &\; 0 & 0 & 0 &  1 \end{array}
\right),
\]
Then, system \eqref{26} is equivalently rewritten as
\begin{equation}
{\cal A}_0(\widehat{U},\widehat{\Psi})\partial_t{V}+ \sum_{k=1}^{3}{\cal A}_k(\widehat{U},\widehat{\Psi})\partial_k{V} +{\cal
A}_4(\widehat{U},\widehat{\Psi}){V} ={\cal F}(\widehat{U},\widehat{\Psi}), \label{29}
\end{equation}
where ${\cal A}_{\alpha}=J^{\textsf{T}}A_{\alpha}J\quad (\alpha =0,2,3),\quad {\cal A}_1=J^{\textsf{T}}\widetilde{A}_1J,\quad {\cal F}=J^{\textsf{T}}f$. The boundary matrix ${\cal A}_1$ in system (\ref{29}) has the form
\begin{equation}
{\cal A}_1=\frac{1}{\partial_1\widehat{\Phi}}{\cal A}_{(1)}+{\cal A}_{(0)},\quad
{\cal A}_{(1)}=\left(\begin{array}{ccccc}
0 & 1 & 0 & 0 & 0 \\
1 & 0 & 0 & 0 & 0 \\
0 & 0 & 0 & 0 & 0 \\
0 & 0 & 0 & 0 & 0 \\
0 & 0 & 0 & 0 & 0
\end{array}
\right),
\quad {\cal A}_{(0)}|_{x_1=0}=0,
\label{30}
\end{equation}
i.e., $V_n=(\dot{p},\dot{v}_n)$ is the ``noncharacteristic'' part of the vector $V$.
The explicit form of ${\cal A}_{(0)}$ is of no interest, and it is only important that, in view (\ref{20}), ${\cal A}_{(0)}|_{x_1=0}=0$. The boundary matrix ${\cal A}_1$ on the boundary $x_1=0$ has one positive (``outgoing'') eigenvalue. Since one of the boundary conditions is needed for determining the function $\varphi$, the correct number of boundary conditions is two (that is the case in \eqref{27}). Hence, the hyperbolic problem  \eqref{26}--\eqref{28} has the property of {\it maximality} \cite{Rauch}.

By standard argument we get for system \eqref{26} the energy inequality
\begin{equation}
I(t)- 2\int_{\partial\Omega_t}\dot{p}\,\dot{v}_N|_{x_1=0}\,{\rm d}x'\,{\rm d}s\leq C(K) 
\left( \| f\|^2_{L_2(\Omega_T)} +\int_0^tI(s)\,{\rm d}s\right),
\label{31}
\end{equation}
where $I(t)=\int_{\mathbb{R}^3_+}({\cal A}_0V,V)\,{\rm d}x$ and $C=C(K)>0$ is a constant depending on $K$ (see \eqref{18}). In view of the boundary conditions \eqref{27}, one has
\[
-2\dot{p}\,\dot{v}_N|_{x_1=0}=2(\varphi\hat{a}- g_2)(\partial_t\varphi+\hat{v}_2\partial_2\varphi+\hat{v}_3\partial_3\varphi-
\varphi\,\partial_1\hat{v}_{N}-g_1)|_{x_1=0}
\]
\[
=\partial_t\left\{ \hat{a}|_{x_1=0}\,\varphi^2
-2g_2\varphi\right\}
-\bigl\{\partial_t\hat{a}+\partial_2(\hat{v}_2\hat{a})+\partial_3(\hat{v}_3\hat{a})-2\hat{a}\partial_1\hat{v}_{N}\bigr\}|_{x_1=0}\,\varphi^2
\]
\[
+2\left\{ \partial_tg_2+\partial_2(\hat{v}_2g_2)+\partial_3(\hat{v}_3g_2)+g_2\partial_1\hat{v}_{N}-g_1\hat{a}\right\}|_{x_1=0}\,\varphi
+2g_1g_2
\]
\[
+\partial_2\left\{ \hat{v}_2\hat{a}\varphi^2 -2\hat{v}_2g_2\varphi \right\}+
\partial_3\left\{ \hat{v}_3\hat{a}\varphi^2 -2\hat{v}_3g_2\varphi  \right\},
\]
where $\hat{a}=2\epsilon +\partial_1\hat{p}$.
Then, using the Young inequality, from \eqref{31} we obtain
\[
\begin{array}{r}
{\displaystyle
I(t)+\int_{\mathbb{R}^2}\left(2\epsilon +\partial_1\hat{p}|_{x_1=0}\right)\varphi^2\,{\rm d}x'
\leq C(K) \Bigl\{ \| f\|^2_{L_2(\Omega_T)}+\| g\|^2_{H^1(\partial\Omega_T)}}\qquad \\[9pt]
{\displaystyle
+\int_0^t\left(I(s)
+\|\varphi (s)\|^2_{L_2(\mathbb{R}^2)}\right)\,{\rm d}s\Bigr\}.}
\end{array}
\]
Taking into account assumption \eqref{21} and applying Gronwall's lemma, we finally deduce the basic a priori $L_2$--estimate
\begin{equation}
\|\dot{U}\|_{L_2(\Omega_T)}+\|\varphi\|_{L_2(\partial\Omega_T)}\leq C(K)\left\{ 
\| f\|_{L_2(\Omega_T)}+\| g\|_{H^1(\partial\Omega_T)}\right\}.
\label{32}
\end{equation}

\begin{remark}{\rm
In the a priori estimate \eqref{32} we have a loss of one derivative from the source term $g$ to the solution
(more precisely, we loose one derivative only from $g_2$ but not from $g_1$). This is quite natural because one can check that the constant coefficients linearized problem, i.e., problem \eqref{26}--\eqref{28} with frozen coefficients satisfies the Kreiss--Lopatinski condition but violates the {\it uniform} Kreiss--Lopatinski condition \cite{Kreiss,Met}.
Although the weak Kreiss--Lopatinski condition holds we had to assume the fulfillment of the extra condition \eqref{21}
while deriving the a priori estimate \eqref{32}. This is very unusual for hyperbolic initial boundary value problems because, as a rule (see, e.g., \cite{CS,Tr}), the fulfillment of the Kreiss--Lopatinski condition is enough for 
obtaining a priori estimates. Actually, in our case the appearance of an extra condition on the level of variable coefficients linear analysis is caused by the fact that the symbol associated to the free surface is {\it not elliptic}, i.e., we are not able to resolve our boundary conditions \eqref{27} for the gradient $(\partial_t\varphi ,\partial_2\varphi ,\partial_3\varphi )$. Therefore, it is also natural that in estimate \eqref{32} we ``lose one derivative from the front'', i.e., we do not have the $H^1$--norm of $\varphi$ in the left-hand side of \eqref{32}.
}\label{r2}
\end{remark}

Since in estimate \eqref{32} we do not lose derivatives from the source term $f$ to the solution, the existence of solutions to problem \eqref{26}--\eqref{28} can be proved by the classical argument of Lax and Phillips \cite{LP}. Indeed, we first reduce our problem to one with homogeneous boundary conditions by subtracting from the solution a more regular function (see, e.g., \cite{RM}). Namely, there exists $\widetilde{U}=(\tilde{p},\tilde{v},\widetilde{S})\in H^{s+1}(\Omega_T)$ vanishing in the past such that 
\[
-\tilde{v}_N=g_1,\quad \tilde{p}=g_2\quad\mbox{on}\ \partial\Omega_T,
\]
where $\tilde{v}_{\rm N}=\tilde{v}_1-\tilde{v}_2\partial_2\hat{\varphi}-\tilde{v}_3\partial_3\hat{\varphi}$.
If $\dot{U}=U^{\natural}+\widetilde{U}$, then $U^{\natural}$ satisfies \eqref{26}--\eqref{28} with $g=0$ and
$f=f^{\natural}$, where $f^{\natural}= f-\mathbb{L}'_e(\widehat{U},\widehat{\Psi})\widetilde{U}$.
That is, it is enough to prove the existence of a solution $(\dot{U},\varphi )$ to problem  \eqref{26}--\eqref{28} with $g=0$. For this problem we have the estimate 
\begin{equation}
\|\dot{U}\|_{L_2(\Omega_T)}+\|\varphi\|_{L_2(\partial\Omega_T)}\leq C(K)\, 
\| f\|_{L_2(\Omega_T)}.
\label{33}
\end{equation}

Having in hand estimate \eqref{33} {\it with no loss of derivatives} we may use the classical argument in \cite{LP}.
In particular, we define a dual problem for \eqref{26}--\eqref{28} as follows:
\begin{align}
 &\mathbb{L}'_e(\widehat{U},\widehat{\Psi})^*\bar{U}=\bar{f}\quad\mbox{in}\ \Omega_T, \label{34}\\[3pt]
& \partial_t\bar{p}+\partial_2(\hat{v_2}\bar{p})+\partial_3(\hat{v_3}\bar{p}) +\bar{p}\partial_1\hat{v}_N+
\bar{v}_N\hat{a}=0\quad\mbox{on}\ \partial\Omega_T,\label{35}\\[3pt]
& \bar{U}=0\quad \mbox{for}\ t<0,\label{36}
\end{align}
where $\bar{U}=(\bar{p},\bar{v},\bar{S})$, $\bar{v}_{\rm N}=\bar{v}_1-\bar{v}_2\partial_2\hat{\varphi}-\bar{v}_3\partial_3\hat{\varphi}$, and
\[
{\mathbb{L}'_e}^*=-\mathbb{L}'_e+{\cal C} +{\cal C}^{\textsf{T}}-\partial_tA_0-\partial_1\widetilde{A}_1
-\partial_2A_2-\partial_3A_3.
\]
Problem \eqref{34}--\eqref{36} is indeed a dual problem for \eqref{26}--\eqref{28} because
for all $\dot{U}\in H^1(\Omega_T)$ and $\bar{U}\in H^1(\Omega_T)$, with $\bar{U}|_{t=T}=0$, satisfying the homogeneous boundary conditions \eqref{27} (with $g=0$) and \eqref{35} respectively, one has
\[
(\mathbb{L}'_e\dot{U},\bar{U})_{L_2(\Omega_T)}-(\dot{U},
{\mathbb{L}'_e}^*\bar{U})_{L_2(\Omega_T)}=-(\widetilde{A}_1\dot{U},\bar{U})_{L_2(\partial\Omega_T)}=
-({\cal A}_1V,\bar{V})_{L_2(\partial\Omega_T)}=0,
\]
where $\bar{V}=J^{-1}\bar{U}$. For the dual problem \eqref{34}--\eqref{36} we can easily get the inequality
\[
\bar{I}(t)+\int_{\mathbb{R}^2}\frac{1}{2\epsilon+\partial_1\hat{p}_{|x_1=0}}\,\bar{p}^2_{|x_1=0}\,{\rm d}x'
\leq C(K) \left\{ \| \bar{f}\|^2_{L_2(\Omega_T)}
+\int_0^t\left(\bar{I}(s)
+\|\bar{p}_{|x_1=0} (s)\|^2_{L_2(\mathbb{R}^2)}\right)\,{\rm d}s\right\}
\]
($\bar{I}(t)=\int_{\mathbb{R}^3_+}({\cal A}_0\bar{V},\bar{V})\,{\rm d}x$) which, in view of condition \eqref{21},
implies the $L_2$--estimate
\[
\|\bar{U}\|_{L_2(\Omega_T)}\leq C(K)\,\| \bar{f}\|_{L_2(\Omega_T)}.
\]
We omit further arguments which are really classical and refer to \cite{LP} (see also, e.g., \cite{ChazPir,Met}).
Thus, we have the following well-posedness theorem for the linearized problem  \eqref{26}--\eqref{28}.

\begin{theorem}
Let assumptions \eqref{18}--\eqref{21} are fulfilled for the basic state (\ref{17}). Then for all $(f,g) \in L_2(\Omega_T)\times H^1(\partial\Omega_T)$ that vanish in the past problem (\ref{26})--(\ref{28}) has a unique solution $(\dot{U},\varphi )\in L_2(\Omega_T)\times L_2(\partial\Omega_T)$. This solution obeys the a priori estimate \eqref{32}.
\label{t3}
\end{theorem}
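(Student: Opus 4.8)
\textbf{Proof proposal for Theorem \ref{t3}.}

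The plan is to establish existence by a duality argument \`a la Lax--Phillips \cite{LP}, using the a priori estimate \eqref{33} (for the homogeneous-boundary problem, with no loss of derivatives from $f$) together with the analogous $L_2$-estimate for the dual problem \eqref{34}--\eqref{36}, which is derived above in exactly the same way. The first step is the reduction to homogeneous boundary conditions: given $(f,g)\in L_2(\Omega_T)\times H^1(\partial\Omega_T)$ vanishing in the past, one lifts $g$ to a function $\widetilde{U}\in H^2(\Omega_T)$ vanishing in the past with $-\widetilde{v}_N=g_1$, $\widetilde{p}=g_2$ on $\partial\Omega_T$ (this uses $g\in H^1(\partial\Omega_T)$ and a standard trace lifting, see \cite{RM}); then $U^{\natural}:=\dot{U}-\widetilde{U}$ solves \eqref{26}--\eqref{28} with $g=0$ and right-hand side $f^{\natural}=f-\mathbb{L}'_e(\widehat{U},\widehat{\Psi})\widetilde{U}\in L_2(\Omega_T)$. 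So it suffices to treat the case $g=0$.

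For the case $g=0$ the estimate \eqref{33} holds, and the symmetric computation gives the dual estimate $\|\bar{U}\|_{L_2(\Omega_T)}\le C(K)\,\|\bar{f}\|_{L_2(\Omega_T)}$ for solutions of \eqref{34}--\eqref{36}; the key algebraic point making \eqref{34}--\eqref{36} the genuine adjoint is the boundary identity $(\mathbb{L}'_e\dot{U},\bar{U})_{L_2(\Omega_T)}-(\dot{U},{\mathbb{L}'_e}^*\bar{U})_{L_2(\Omega_T)}=-({\cal A}_1V,\bar{V})_{L_2(\partial\Omega_T)}$, which vanishes when $\dot{U}$ satisfies \eqref{27} with $g=0$ and $\bar{U}$ satisfies \eqref{35}. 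One then runs the classical Lax--Phillips scheme: on the subspace of $L_2(\Omega_T)$ consisting of right-hand sides $\bar{f}$ of the dual problem, define the linear functional $\bar{f}\mapsto (f^{\natural},\bar{U})_{L_2(\Omega_T)}$, where $\bar{U}$ is the (unique) dual solution; the dual estimate shows this functional is bounded with norm $\le C(K)\|f^{\natural}\|_{L_2(\Omega_T)}$, so by Hahn--Banach and Riesz representation it is realized by some $\dot{U}\in L_2(\Omega_T)$ with $\|\dot{U}\|_{L_2(\Omega_T)}\le C(K)\|f^{\natural}\|_{L_2(\Omega_T)}$. Integrating by parts against test functions $\bar{U}$ supported away from $\{t=T\}$ shows $\dot{U}$ is a weak solution of $\mathbb{L}'_e\dot{U}=f^{\natural}$; varying $\bar{U}$ up to the boundary and to $t=T$ recovers, respectively, the boundary condition $\dot{p}+\widetilde{U}$-part $=0$ on $\partial\Omega_T$ (i.e.\ \eqref{27} with $g=0$) and the vanishing of $\dot{U}$ for $t<0$. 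Recovering $\varphi$ is then immediate: the second component of \eqref{27} gives $\varphi=-\dot{p}/\hat{a}|_{x_1=0}$ (with $\hat{a}=2\epsilon+\partial_1\hat{p}$ bounded away from zero by \eqref{21}), and one checks the first component of \eqref{27} is an ODE along the boundary for $\varphi$ that is automatically consistent; alternatively one reads $\varphi$ off as in the derivation of \eqref{32}. Adding back $\widetilde{U}$ gives the solution of the original inhomogeneous problem, and the bound \eqref{32} follows by combining $\|\dot{U}\|_{L_2}\le C(K)\|f^{\natural}\|_{L_2}\le C(K)(\|f\|_{L_2(\Omega_T)}+\|g\|_{H^1(\partial\Omega_T)})$ with the expression for $\varphi$; uniqueness is a direct consequence of \eqref{32} applied to the difference of two solutions.

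The main obstacle is not any single hard estimate --- both the direct estimate \eqref{31}--\eqref{32} and the dual estimate are obtained by the same energy method, crucially exploiting the extra condition \eqref{21} to absorb the boundary term $\int(2\epsilon+\partial_1\hat{p})\varphi^2$ with a favorable sign --- but rather the bookkeeping in the weak-to-strong passage: one must verify that the weak solution produced by Hahn--Banach actually has enough regularity near the (characteristic, constant-multiplicity) boundary $x_1=0$ for the trace $\dot{p}|_{x_1=0}$, hence $\varphi$, to be well defined in $L_2(\partial\Omega_T)$, and that all the integrations by parts used to identify the boundary condition are legitimate. Here the noncharacteristic part $V_n=(\dot{p},\dot{v}_n)$ does admit an $L_2$ trace (by the standard argument for symmetric hyperbolic systems with characteristic boundary of constant multiplicity, e.g.\ \cite{Rauch}, using that ${\cal A}_{(1)}$ acts only on $V_n$), which is exactly what is needed since the boundary conditions \eqref{27} involve only $\dot{p}$ and $\dot{v}_N=\dot{v}_n|_{x_1=0}$; the characteristic components never appear on the boundary. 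Because these weak-solution/trace arguments are entirely classical for maximally dissipative symmetric hyperbolic problems, I would state them briefly and refer to \cite{LP,ChazPir,Met}, which is precisely what the text does.
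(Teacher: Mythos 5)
Your proposal follows essentially the same route as the paper: the same lifting of $g$ to reduce to homogeneous boundary conditions, the same dual problem \eqref{34}--\eqref{36} with its $L_2$ estimate obtained by the energy method under \eqref{21}, and the same Lax--Phillips duality argument (which the paper simply cites from \cite{LP,ChazPir,Met} while you spell out the Hahn--Banach/Riesz step, the trace of the noncharacteristic part $V_n$, and the recovery of $\varphi$ from the second boundary condition). The argument is correct and consistent with the paper's proof.
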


\begin{remark}{\rm
Strictly speaking, the uniqueness of the solution to problem (\ref{26})--(\ref{28}) follows from estimate (\ref{32}),
provided that our solution belongs to $H^1(\Omega_T)\times H^1(\partial\Omega_T)$. We omit here a formal proof of the existence of solutions having an arbitrary degree of smoothness, and we shall suppose that the existence result of Theorem \ref{t3} is also valid for the function spaces $H^s(\Omega_T)\times H^s(\partial\Omega_T)$, with $s\geq 1$. In this case exact assumptions about the regularity of the basic state will be made in Sect. 3, where we prove a tame a priori estimate in $H^s(\Omega_T)\times H^s(\partial\Omega_T)$ with $s$ large enough.}
\label{r3}
\end{remark}

\section{Tame estimate for the linearized problem}
\label{s4}

We are going to derive a tame a priori estimate in $H^s$ for problem (\ref{26})--(\ref{28}), with $s$ large enough. 
This tame estimate (see Theorem \ref{t4} below) being, roughly speaking, linear in high norms (that are
multiplied by low norms) is with no loss of derivatives from
$f$, with the loss of one derivative from $g$, and with a fixed loss of derivatives with
respect to the coefficients, i.e., with respect to the basic state
(\ref{17}). Although problem (\ref{26})--(\ref{28}) is a hyperbolic problem with characteristic boundary that implies a natural loss of control on derivatives in the normal direction we manage to compensate this loss and derive higher order estimates in usual Sobolev spaces. This is achieved by using the same idea as in \cite{Sch,CS} and estimating missing normal derivatives through a vorticity-type linearized system.

\begin{theorem}
Let $T>0$ and $s\in \mathbb{N}$, with $s\geq 3$. Assume that the basic state $(\widehat{U} ,\hat{\varphi})\in
H^{s+3}(\Omega_T )\times H^{s+3}(\partial\Omega_T)$ satisfies assumptions (\ref{18})--(\ref{21}) and
\begin{equation}
\|\widehat{U}\|_{H^6(\Omega_T )} +\|\hat{\varphi} \|_{H^{6}(\partial\Omega_T)}\leq \widehat{K},
\label{37}
\end{equation}
where $\widehat{K}>0$ is a constant. Let also the data $(f ,g)\in
H^{s}(\Omega_T )\times H^{s+1}(\partial\Omega_T)$ vanish in the past. Then there exists a positive constant $K_0$ that does not depend on $s$ and $T$ and there exists a constant $C(K_0) >0$ such that, if $\widehat{K}\leq K_0$, then there exists a unique solution $(\dot{U} ,\varphi)\in H^{s}(\Omega_T )\times H^{s}(\partial\Omega_T)$ to problem (\ref{26})--(\ref{28}) that obeys the a priori tame estimate
\begin{equation}
\begin{array}{r}
{\displaystyle
\|\dot{U}\|_{H^s(\Omega_T )}+\|\varphi\|_{H^{s}(\partial\Omega_T)}\leq C(K_0)\Bigl\{
\|f\|_{H^{s}(\Omega_T )}+ \|g \|_{H^{s+1}(\partial\Omega_T)} }\qquad\\[9pt]
+\bigl( \|f\|_{H^{3}(\Omega_T )}+ \| g\|_{H^{4}(\partial\Omega_T)} \bigr)\bigl(
\|\widehat{U}\|_{H^{s+3}(\Omega_T )}+\|\hat{\varphi}\|_{H^{s+3}(\partial\Omega_T)}\bigr)\Bigr\}
\end{array}
\label{38}
\end{equation}
for a sufficiently short time $T$.
\label{t4}
\end{theorem}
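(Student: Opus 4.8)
The plan is to upgrade the basic $L_2$-estimate \eqref{32} of Theorem \ref{t3} to the high-order tame estimate \eqref{38} by differentiating the linearized system, estimating tangential derivatives first and then recovering normal derivatives from a vorticity-type equation, and finally combining everything with Moser-type calculus inequalities to keep the estimate linear in the high norms. First I would introduce the tangential derivatives $\partial^{\alpha}_{\mathrm{tan}} = \partial_t^{\alpha_0}\partial_2^{\alpha_2}\partial_3^{\alpha_3}$ and apply $\partial^{\alpha}_{\mathrm{tan}}$ with $|\alpha|\leq s$ to the effective interior system \eqref{29} for $V$ and to the boundary conditions \eqref{27}. Since tangential derivatives commute with the trace on $\{x_1=0\}$ and preserve the structure of the boundary conditions (the commutators are lower-order and go into the forcing), the differentiated problem has the same form as \eqref{26}--\eqref{28} with new source terms $f_{\alpha}, g_{\alpha}$; applying the energy argument that produced \eqref{32}, together with assumption \eqref{21} to control the boundary term in $\varphi$, yields the estimate of all tangential derivatives of $\dot U$ (equivalently $V$) and of $\varphi$ in terms of $\|f\|_{H^s}$, $\|g\|_{H^{s+1}}$, plus commutator terms. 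The commutators are handled by the standard Moser/Gagliardo--Nirenberg product and composition inequalities, which under the smallness hypothesis $\widehat K\le K_0$ let the top-order contribution of the coefficients be absorbed, leaving the mixed low-high terms $(\|f\|_{H^3}+\|g\|_{H^4})(\|\widehat U\|_{H^{s+3}}+\|\hat\varphi\|_{H^{s+3}})$ that appear in \eqref{38}.

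The missing ingredient at this stage is control of the normal derivatives $\partial_1^k V$. As indicated in the paragraph preceding Theorem \ref{t4}, following \cite{Sch,CS} I would recover these by estimating the linearized vorticity. Concretely, the interior system \eqref{29} resolves $\partial_1 V_n = \partial_1(\dot p,\dot v_n)$ — the noncharacteristic part — directly from the other equations, because the block $\mathcal A_{(1)}$ in \eqref{30} is invertible on that part; for the characteristic components one differentiates the momentum and entropy equations, forms the appropriate curl-type combination (the linearized vorticity and the linearized $\dot S$-transport), and observes that this quantity satisfies a symmetric hyperbolic system with \emph{no} boundary condition needed, hence an interior energy estimate controls its tangential derivatives without further loss. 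An induction on the number of normal derivatives then converts tangential-plus-vorticity control into full $H^s$ control of $V$, and therefore of $\dot U = JV$, with the Jacobian $J$ being tame in $\hat\varphi$. Summing over the induction and re-absorbing the coefficient-dependent commutators via Moser inequalities and smallness gives \eqref{38}. Existence and uniqueness of the $H^s$ solution follow from Theorem \ref{t3} (and Remark \ref{r3}) together with the derived a priori bound by a routine regularization/approximation argument.

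I expect the main obstacle to be the normal-derivative recovery: one must verify carefully that the vorticity-type unknown genuinely decouples at the boundary (so that the characteristic degeneracy of $\widetilde A_1$ does not reappear), that each step of the induction on normal derivatives loses at most a fixed, $s$-independent number of derivatives on the coefficients, and that the algebra of separating $V$ into $V_n$ and the characteristic part is compatible with the $J$-conjugation in \eqref{29}. A secondary delicate point is bookkeeping in the Moser estimates so that exactly the claimed norms $H^{s+3}$ of the basic state and $H^3$, $H^4$ of the data appear, and that the constant $C(K_0)$ depends only on $K_0$ and not on $s$ or $T$; the shortness of $T$ is used only to absorb the Gronwall factors and the low-norm products, exactly as in \eqref{32}.
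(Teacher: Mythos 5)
Your proposal follows essentially the same route as the paper's proof: tangential energy estimates using assumption \eqref{21} with Moser-type control of commutators, recovery of $\partial_1 V_n$ from the structure of the boundary matrix \eqref{30} (as in \eqref{44}), estimation of the characteristic components via the entropy transport equation \eqref{49} and the linearized vorticity system \eqref{50} which need no boundary conditions, a finite induction on normal derivatives, return to $\dot U = JV$ by Moser inequalities, and existence/uniqueness from Theorem \ref{t3} and Remark \ref{r3}, with smallness of $T$ and $\widehat K\le K_0$ used to absorb the low norms (cf. \eqref{54}--\eqref{55}). This is a correct outline of the argument actually carried out in the paper.
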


\begin{proof}
Since arguments below are quite standard we somewhere will drop detailed calculations. By applying to system \eqref{29} the operator 
$\partial^{\alpha}_{\rm tan}=\partial_t^{\alpha_0}\partial_2^{\alpha_2}\partial_3^{\alpha_3}$, with $|\alpha |=
|(\alpha_0,\alpha_2, \alpha_3)|\leq s$, one gets
\begin{equation}
\int_{\mathbb{R}^3_+}({\cal A}_0\partial^{\alpha}_{\rm tan}V,\partial^{\alpha}_{\rm tan}V ) {\rm d}x - 2\int_{\partial\Omega_t}\partial^{\alpha}_{\rm tan}\dot{p}\,\partial^{\alpha}_{\rm tan}{\dot{v}_N}|_{x_1=0}\,{\rm d}x'\,{\rm d}s={\cal R},\label{39}
\end{equation}
where
\[
{\cal R}=\int_{\Omega_t}\Bigl(
\bigl\{{\rm div}{\mathbb A}\,\partial^{\alpha}_{\rm tan}V - 2\sum_{j=0}^3[\partial^{\alpha}_{\rm tan} ,{\cal A}_j]\partial_jV - 2\partial^{\alpha}_{\rm tan}({\cal A}_4V )+2\partial^{\alpha}_{\rm tan}{\cal F}\bigr\},\partial^{\alpha}_{\rm tan}V\Bigr){\rm d}x {\rm d}s ,
\]
${\rm div}{\mathbb A}=\sum_{j=0}^3\partial_j{\cal A}_j$ ($\partial_0:=\partial_t$),  
and we use the notation of commutator: $[a,b]c:=a(bc)-b(ac)$. Using the Moser-type calculus inequalities 
\begin{equation}
\| uv\|_{H^s(\Omega_T)}\leq C\left( \|u\|_{H^s(\Omega_T)}\|v\|_{L_{\infty}(\Omega_T)}
+\|u\|_{L_{\infty}(\Omega_T)}\|v\|_{H^s(\Omega_T)}\right),\label{40}
\end{equation} 
\begin{equation}
\| F(u)\|_{H^s(\Omega_T)}\leq C(M)\left( 1+\|u\|_{H^s(\Omega_T)}\right),\label{41}
\end{equation}
where the function $F$ is a $C^{\infty}$ function of $u$, and $M$ is such a positive constant that
$\|u\|_{L_{\infty}(\Omega_T)}\leq M$, we estimate the right-hand side in \eqref{39}:
\begin{equation}
{\cal R}\leq C(K)\left\{ \|V\|^2_{H^s(\Omega_t)}+\|f\|^2_{H^s(\Omega_T)}+\left(\|\dot{U}\|^2_{W^1_{\infty}(\Omega_T)}+\|f\|^2_{L_{\infty}(\Omega_T)}\right)\left( 1+\|{\rm coeff}\|^2_{s+1}\right)\right\},
\label{42}
\end{equation}
with $\|{\rm coeff}\|_{m}:=\|\widehat{U}\|_{H^{m}(\Omega_T)}+\|\hat{\varphi}\|_{H^{m}(\partial\Omega_T)}$.

Taking into account the boundary conditions, we have: 
\[
-\partial^{\alpha}_{\rm tan}\dot{p}\,\partial^{\alpha}_{\rm tan}{\dot{v}_N}|_{x_1=0}=
2\partial^{\alpha}_{\rm tan}(\varphi\hat{a}- g_2)\partial^{\alpha}_{\rm tan}(\partial_t\varphi+\hat{v}_2\partial_2\varphi+\hat{v}_3\partial_3\varphi-
\varphi\,\partial_1\hat{v}_{N}-g_1)|_{x_1=0}
\]
\[
=\partial_t\left\{ \hat{a}|_{x_1=0}\,(\partial^{\alpha}_{\rm tan}\varphi)^2
-2\partial^{\alpha}_{\rm tan}g_2\,\partial^{\alpha}_{\rm tan}\varphi\right\}+\ldots + 
\underline{\partial_2\left\{ \hat{v}_2|_{x_1=0}[\partial^{\alpha}_{\rm tan},\partial_1\hat{p}|_{x_1=0}]\varphi\right\}\partial^{\alpha}_{\rm tan}\varphi}
+\ldots ,
\]
where the underlined term is just a typical one that gives a biggest loss of derivatives from the coefficients in the final a priori estimate \eqref{38}. Indeed, using the calculus inequality \eqref{40} and the trace theorem, we get
\[
\begin{array}{r}
\|\partial_2\left\{ \hat{v}_2|_{x_1=0}[\partial^{\alpha}_{\rm tan},\partial_1\hat{p}|_{x_1=0}]\varphi\right\}\|^2_{L_2(\mathbb{R}^2)}\leq C(K)\Bigl\{ \|\varphi\|^2_{H^s(\partial\Omega_t)}+\|\varphi\|^2_{L_{\infty}(\partial\Omega_T)}\Bigl( 1
+\|\widehat{U}|_{x_1=0}\|^2_{H^{s+2}(\partial\Omega_T)}\Bigr)\Bigr\}\\[9pt]
\leq
C(K)\Bigl\{ \|\varphi\|^2_{H^s(\partial\Omega_t)}+\|\varphi\|^2_{L_{\infty}(\partial\Omega_T)}
\Bigl( 1+\|\widehat{U}\|^2_{H^{s+3}(\Omega_T)}\Bigr)\Bigr\}.
\end{array}
\]
Omitting detailed calculations, from \eqref{39} and \eqref{42} we obtain 
\begin{equation}
\nt V(t)\nt^2_{{\rm tan},s}+\nt \varphi (t)\|^2_{H^s(\mathbb{R}^2)}\leq C(K){\cal M}(t),
\label{43}
\end{equation}
where
\[
{\cal M}(t)={\cal N}(T)+\int_0^t{\cal I}(s)\,{\rm d}s,\quad
{\cal I}(t)=\nt V(t)\nt^2_{H^s(\mathbb{R}^3_+)}+
\nt \varphi (t)\nt^2_{H^s(\mathbb{R}^2)},
\]
\[
{\cal N}(T)=\|f\|^2_{H^s(\Omega_T)}
+\|g\|^2_{H^{s+1}(\partial\Omega_T)} 
+\left(\|\dot{U}\|^2_{W^1_{\infty}(\Omega_T)}+\|\varphi\|^2_{W^1_{\infty}(\partial\Omega_T)}+\|f\|^2_{L_{\infty}(\Omega_T)}\right)\left( 1+\|{\rm coeff}\|^2_{s+3}\right),
\]
\[
\nt u(t)\nt^2_{{\rm tan},m}:=\sum_{|\alpha|\leq m} \| \partial^{\alpha}_{\rm tan} u (t)\|^2_{L_2(\mathbb{R}^3_+)},\quad
\nt u(t)\nt^2_{H^m(D)}:=
\sum\limits_{j=0}^m\|\partial_t^ju(t)\|^2_{H^m(D)}
\]
($D=\mathbb{R}^2$ or $D=\mathbb{R}^3_+$).
Since only the biggest loss of derivatives from the coefficients will play the role for obtaining the final tame estimate, we have roughened inequality (\ref{43}) by choosing the biggest loss. 

It follows from \eqref{29} and \eqref{30} that
\begin{equation}
(\partial_1V_n,0,0,0)=\bigl(\partial_1\widehat{\Phi}\bigr){\cal A}_{(1)}\Bigl( {\cal F}-{\cal A}_0\partial_tV-\sum_{k=2}^3{\cal A}_k
\partial_kV- {\cal A}_4V-{\cal A}_{(0)}\partial_1V\Bigr). 
\label{44}
\end{equation}
Applying to \eqref{44} the operator $\partial_{\rm tan}^{\beta}$, with $|\beta |\leq s-1$, 
using decompositions like 
\[
\partial_{\rm tan}^{\beta}(B\partial_i V)=B\partial_{\rm tan}^{\beta}\partial_i V + 
[\partial_{\rm tan}^{\beta},B]\partial_i V,
\]
taking into account
the fact that ${\cal A}_{(0)}|_{x_1=0}=0$,  and employing counterparts of the calculus inequalities \eqref{40} and \eqref{41} for the ``layerwise'' norms $\nt (\cdot )(t)\nt$ (see \cite{Sch}), one gets
\begin{equation}
\begin{array}{r}
\|\partial_1\partial_{\rm tan}^{\beta}V_n (t)\|^2_{L_2(\mathbb{R}^3_+)}\leq C(K)\Bigl\{\nt V(t)\nt^2_{{\rm tan},s}
+\|\sigma \partial_1\partial_{\rm tan}^{\beta} V(t)\|^2_{L_2(\mathbb{R}^3_+)}+ \nt V(t) \nt^2_{H^{s-1}(\mathbb{R}^3_+)}
\\[6pt]
+ \nt f (t)\nt^2_{H^{s-1}(\mathbb{R}^3_+)}+
\left(\|\dot{U}\|^2_{W^1_{\infty}(\Omega_T)}+\|f\|^2_{L_{\infty}(\Omega_T)}\right)\left( 1+\nt{\rm coeff (t) }\nt^2_{s+1}\right)\Bigr\},
\end{array}
\label{45}
\end{equation}
where $\sigma=\sigma (x_1)\in C^{\infty}(\mathbb{R}_+)$ is
a monotone increasing function such that $\sigma (x_1)=x_1$ in a neighborhood of
the origin and $\sigma (x_1)=1$ for $x_1$ large enough. Since $\sigma |_{x_1=0}=0$ we do not need to use the boundary conditions to estimate $\sigma\partial_1^j\partial_{\rm tan}^{\gamma}V$, with $j+|\gamma |\leq s$,  and we easily get the inequality
\begin{equation}
\begin{array}{r}
\| \sigma\partial_1^j\partial_{\rm tan}^{\gamma}V(t)\|^2_{L_2(\mathbb{R}^3_+)}
\leq C(K)\Bigl\{ \|V\|^2_{H^s(\Omega_t)}+\|f\|^2_{H^s(\Omega_T)}\qquad
\\[9pt]
+\left(\|\dot{U}\|^2_{W^1_{\infty}(\Omega_T)}+\|f\|^2_{L_{\infty}(\Omega_T)}\right)\left( 1+\|{\rm coeff}\|^2_{s+1}\right)\Bigr\}.
\end{array}
\label{46}
\end{equation}
Taking into account Sobolev's embedding in one space dimension,
\[
\nt u(t) \nt^2_{H^{m-1}(D)}\leq \|u\|^2_{L_{\infty}\left([0,t],H^{m-1}(D)\right)}\leq C 
\|u\|^2_{H^{m}([0,t]\times D)},
\]
and combining \eqref{43}, \eqref{45}, and \eqref{46} for $j=1$, we obtain
\begin{equation}
\nt V(t)\nt^2_{{\rm tan},s}+\nt \varphi (t)\|^2_{H^s(\mathbb{R}^2)}
+\sum_{i=1}^{k}\sum_{|\alpha|\leq s-i}\|\partial_1^i\partial_{\rm tan}^{\alpha}V_n (t)\|^2_{L_2(\mathbb{R}^3_+)}
\leq C(K){\cal M}(t),
\label{47}
\end{equation}
with $k=1$.  

Estimate \eqref{47} for $k=s$ is easily proved by finite induction and equivalently rewritten as
\begin{equation}
\nt V(t)\nt^2_{{\rm tan},s}+\|V_n (t)\|^2_{H^s(\mathbb{R}^3_+)}
+\nt \varphi (t)\|^2_{H^s(\mathbb{R}^2)}\leq C(K){\cal M}(t).
\label{48}
\end{equation}
Missing normal derivatives in \eqref{48} for the ``characteristic'' part $(\dot{v}_2,\dot{v}_3,\dot{S})$ of the unknown $V$ can be estimated from the last equation in \eqref{26},
\begin{equation}
\partial_t\dot{S}+\frac{1}{\partial_1\widehat{\Phi}}\left\{ (\hat{w},\nabla )\dot{S}+(\dot{u},\nabla )\widehat{S}\right\}=f_5,
\label{49}
\end{equation}
and a system for the linearized vorticity $\xi = \nabla \times \tilde{v}$, where
\[
\tilde{v}=(\dot{v}_1,\dot{v}_{\tau_2},\dot{v}_{\tau_3}),\quad
\dot{v}_{\tau_k}=(\dot{v},\tau_k),\quad \tau_2= (\partial_2\hat{\varphi},1,0),\quad 
\tau_3= (\partial_3\hat{\varphi},0,1),
\]
\[
\hat{w}= (\hat{v}_n-\partial_t\widehat{\Psi},\hat{v}_2\partial_1\widehat{\Phi},
\hat{v}_3\partial_1\widehat{\Phi}),\quad \dot{u}=(\dot{v}_n,\dot{v}_2\partial_1\widehat{\Phi},
\dot{v}_3\partial_1\widehat{\Phi}).
\]
This system is obtained by applying the curl operator to the equation for $\tilde{v}$ following from \eqref{26},
\[
\partial_t\tilde{v}+\frac{1}{\partial_1\widehat{\Phi}}\left\{ (\hat{w},\nabla )\tilde{v} +\frac{1}{\rho (\hat{p},\widehat{S})}\,\nabla\dot{p} \right\}+\mbox{l.o.t}=\tilde{f}_v
\]
($\tilde{f}_v=(f_2,f_{\tau_2},f_{\tau_3}),\quad f_{\tau_k}= (f_v,\tau_k),\quad f_v=(f_2,f_3,f_4)$),
and has the form
\begin{equation}
\xi_t +	\frac{1}{\partial_1\widehat{\Phi}}(\hat{w},\nabla )\xi +\mbox{l.o.t}=\nabla\times \tilde{f}_v,
\label{50}	
\end{equation}
where l.o.t. are lower-order terms which exact form has no meaning.

Both equations \eqref{49} and \eqref{50} do not need boundary conditions because, in view of \eqref{20}, the first component of the vector $\hat{w}$ is zero on the boundary $x_1=0$. Therefore, omitting detailed calculations and combining corresponding estimates for the normal derivatives of the ``characteristic'' unknown $(\dot{v}_2,\dot{v}_3,\dot{S})$ with \eqref{48}, we deduce the inequality
\[
{\cal I}(t)\leq C(K)\left\{{\cal N}(T)+\int_0^t{\cal I}(s)\,{\rm d}s\right\}.
\]
Applying then Gronwall's lemma, one gets
\[
{\cal I}(t)\leq C(K)\,e^{C(K)T}{\cal N}(T)
\]
(${\cal I}(0)=0$, see (\ref{28})). Integrating the last inequality over the interval $[0,T]$, we come to the estimate
\begin{equation}
\|V\|^2_{H^s(\Omega_T )}+\|\varphi\|^2_{H^{s}(\partial\Omega_T)}\leq C(K)Te^{C(K)T}{\cal N}(T).
\label{51}
\end{equation}
Recall that $\dot{U}=JV$. Taking into account the decomposition $J(\hat{\varphi})= I +J_0(\hat{\varphi})$ and $J_0(0)=0$, 
using \eqref{40} together with the improved calculus inequality \eqref{41} for the case $F(0)=0$,
\[
\| F(u)\|_{H^s(\Omega_T)}\leq C(M)\|u\|_{H^s(\Omega_T)},
\]
and applying Sobolev's embedding in one space dimension, we obtain
\begin{equation}
\begin{array}{r}
\|\dot{U}\|^2_{H^s(\Omega_T )}= \|V + J_0V\|^2_{H^s(\Omega_T )}\leq C(K)\bigl(
\|V\|^2_{H^s(\Omega_T )} +\|\dot{U}\|^2_{L_{\infty}(\Omega_T)}\|{\rm coeff}\|_s^2\bigr)\qquad \\[9pt]
\leq C(K)\|V\|^2_{H^s(\Omega_T )}+
TC(K)\|\dot{U}\|^2_{L_{\infty}(\Omega_T)}\|{\rm coeff}\|_{s+1}^2.
\end{array}
\label{52}
\end{equation}
Inequalities (\ref{51}) and (\ref{52}) imply
\begin{equation}
\|\dot{U}\|^2_{H^s(\Omega_T )}+\|\varphi\|^2_{H^{s}(\partial\Omega_T)}\leq C(K)Te^{C(K)T}{\cal N}(T).
\label{53}
\end{equation}

Taking into account Theorem \ref{t3} and Remark \ref{r3}, we have the well-posedness of problem (\ref{26})--(\ref{28})
in $H^{s}(\Omega_T )\times H^{s}(\partial\Omega_T)$.
Applying Sobolev's embeddings, from (\ref{53}) with $s\geq 3$ we get
\begin{equation}
\begin{array}{r}
\|\dot{U}\|_{H^s(\Omega_T )}+\|\varphi\|_{H^{s}(\partial\Omega_T)}
\leq C(K)T^{1/2}e^{C(K)T}\Bigl\{ \|f\|_{H^s(\Omega_T)}
+\|g\|_{H^{s+1}(\partial\Omega_T)} \qquad \qquad \\[9pt]
+\left(\|\dot{U}\|_{H^3(\Omega_T)}+\|\varphi\|_{H^3(\partial\Omega_T)}+\|f\|_{H^3(\Omega_T)}\right)
\bigl(
\|\widehat{U}\|_{H^{s+3}(\Omega_T )}+\|\hat{\varphi}\|_{H^{s+3}(\partial\Omega_T)}\bigr)\Bigr\},
\end{array}
\label{54}
\end{equation}
where we have absorbed some norms $\|\dot{U}\|_{H^3(\Omega_T)}$ and $\| \varphi\|_{H^3(\partial\Omega_T)}$ in the left-hand side by choosing $T$ small enough. Considering (\ref{54}) for $s=3$ and using (\ref{37}), we obtain for $T$ small enough that
\begin{equation}
\|\dot{U}\|_{H^3(\Omega_T )}+\|\varphi\|_{H^{3}(\partial\Omega_T)}\leq C(K_0)
\left\{\|f\|_{H^3(\Omega_T)}+\|g\|_{H^{4}(\partial\Omega_T)}\right\}.
\label{55}
\end{equation}
It is natural to assume that $T<1$ and, hence, we can suppose that the constant $C(K_0)$ does not depend on $T$.
Inequalities (\ref{54}) and (\ref{55}) imply (\ref{38}).
\end{proof}

\section{Nash-Moser iteration}
\label{s5}

To use the tame estimate (\ref{38}) for the proof of convergence of the Nash-Moser iteration, we should reduce our nonlinear problem \eqref{newL}, \eqref{14}, \eqref{15} on $[0,T]\times\mathbb{R}^3_+$ to that on $\Omega_T$ which solutions vanish in the past. This is achieved by the classical argument suggesting to absorb the initial data into the interior equations by constructing a so-called {\it approximate solution}. Before constructing the approximate solution we have to define {\it compatibility conditions} for the initial data \eqref{15},
\[(U_0,\varphi_0)=(p_0,v_{1,0},v_{2,0},v_{3,0},S_{0},\varphi_0).
\]

Assuming that the hyperbolicity condition \eqref{6} is satisfied, we rewrite system \eqref{newL} in the form
\begin{equation}
\partial_t U = -(A_0(U ))^{-1}\left( \widetilde{A}_1(U , {\Psi})\partial_1U +
A_2(U )\partial_2U+A_3(U )\partial_3U+{A}_{\nu} (U,\Psi)\partial_1\check{U}+Q(U)\right).
\label{56}
\end{equation}
The traces
\[
U_j=(p_j,v_{1,j},v_{2,j},v_{3,j},S_{j})=\partial_t^jU|_{t=0}\quad \mbox{and}\quad \varphi_j=\partial_t^j\varphi|_{t=0},
\]
with $j\geq 1$, are recursively defined by the formal application of the differential operator $\partial_t^{j-1}$ 
to the boundary condition
\begin{equation}
\partial_t\varphi =\left(v_1-v_2\partial_2\varphi -v_3\partial_3\varphi\right)|_{x_1=0}
\label{57}
\end{equation}
and (\ref{56}) and evaluating $\partial_t^j\varphi$ and $\partial_t^jU$ at $t=0$. Moreover,
$\Psi_j=\partial_t^j\Psi|_{t=0}=\chi (x_1)\varphi_j$. 

We naturally define the zero-order compatibility condition
as $p_0|_{x_1=0}=0$. Note that, unlike the case when the symbol associated with the free surface is elliptic \cite{CS,Met,Tr}, this condition does not contain the function $\varphi_0$. Evaluating \eqref{57} at $t=0$, we get
\begin{equation}
\varphi_1 =\left(v_{1,0}-v_{2,0}\partial_2\varphi_0 -v_{3,0}\partial_3\varphi_0\right)|_{x_1=0},
\label{58}
\end{equation}
and then, with $\partial_t\Phi|_{t=0}:=\Phi_1=\chi(x_1)\varphi_1$, from \eqref{56} evaluated at $t=0$ we define $U_1$.
The first-order compatibility condition $p_1|_{x_1=0}=0$ will implicitly depend on $\varphi_0$ and $\varphi_1$.
Knowing $\varphi_1$ and $U_1$ we can then find $\varphi_2$, $U_2$, etc.  
The following lemma is the analogue of Lemma 4.2.1 in \cite{Met}, Lemma 2 in \cite{CS}, and Lemma 5 in \cite{Tr}.

\begin{lemma}
Let $\mu\in\mathbb{N}$, $\mu \geq 3$, $U_0\in H^{\mu}(\mathbb{R}^3_+)$, and $\varphi_0\in H^{\mu }(\mathbb{R}^2)$. Then, the procedure described above determines $U_j\in H^{\mu -j}(\mathbb{R}^3_+)$ and $\varphi_j\in H^{\mu -j}(\mathbb{R}^2)$ for $j= 1,\ldots ,\mu$. Moreover,
\begin{equation}
\sum_{j=1}^{\mu}\left( \|U_j\|_{H^{\mu -j}(\mathbb{R}^3_+)}+\|\varphi_j\|_{H^{\mu -j}(\mathbb{R}^2)} \right)
\leq CM_0,
\label{59}
\end{equation}
where
\begin{equation}
M_0=\|U_0\|_{H^{\mu}(\mathbb{R}^3_+)}+\|\varphi_0\|_{H^{\mu}(\mathbb{R}^2)},
\label{60}
\end{equation}
the constant $C>0$ depends only on $\mu$ and the norms $\|U_0\|_{W^{1}_{\infty}(\mathbb{R}^3_+)}$ and
$\|\varphi_0\|_{W^1_{\infty}(\mathbb{R}^2_+)}$.
\label{l1}
\end{lemma}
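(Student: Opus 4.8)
The plan is to argue by finite induction on $j$, following the scheme of Lemma 4.2.1 in \cite{Met} (and Lemma 2 in \cite{CS}, Lemma 5 in \cite{Tr}). I would first record the elementary ingredients that power the recursion: multiplication by the fixed cut-off $\chi\in C^\infty_0(\mathbb{R})$ maps $H^k(\mathbb{R}^3_+)$ into itself; the trace operator maps $H^k(\mathbb{R}^3_+)$ continuously into $H^{k-1/2}(\mathbb{R}^2)\hookrightarrow H^{k-1}(\mathbb{R}^2)$; and the hyperbolicity hypothesis \eqref{6} guarantees that $A_0(U_0)$ is invertible with $(A_0(\cdot))^{-1}$ a smooth matrix-valued function of its argument. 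Together with the Moser-type product and composition inequalities \eqref{40}, \eqref{41}, these let me read \eqref{56}--\eqref{57} as recursions expressing $(U_j,\varphi_j)$ as universal polynomial combinations of the previously constructed $U_i,\Psi_i$ ($i\le j-1$), their spatial derivatives, and smooth nonlinear functions of $U_0$.

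For the base step, \eqref{58} writes $\varphi_1$ as a sum of products of traces $v_{i,0}|_{x_1=0}\in H^{\mu-1/2}(\mathbb{R}^2)$ with $\partial_k\varphi_0\in H^{\mu-1}(\mathbb{R}^2)$; since $\mu\ge3$, $H^{\mu-1}(\mathbb{R}^2)$ is an algebra, so $\varphi_1\in H^{\mu-1}(\mathbb{R}^2)$ and $\Psi_1=\chi\varphi_1\in H^{\mu-1}(\mathbb{R}^3_+)$. Evaluating \eqref{56} at $t=0$ then defines $U_1=-(A_0(U_0))^{-1}(\dots)$, where the bracket collects $\partial_i U_0\in H^{\mu-1}$, $\Psi_1$, and the zeroth-order terms (the combination $\widetilde A_1(U_0,\Psi_0)\partial_1\check U+Q(U_0)$ being handled as in Remark \ref{rcheck}); \eqref{40}--\eqref{41} then give $U_1\in H^{\mu-1}(\mathbb{R}^3_+)$. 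For the inductive step, assuming $(U_i,\varphi_i)\in H^{\mu-i}(\mathbb{R}^3_+)\times H^{\mu-i}(\mathbb{R}^2)$ for $i\le k-1$, I apply $\partial_t^{k-1}$ to \eqref{57} and \eqref{56} and evaluate at $t=0$; by Leibniz, $\varphi_k$ becomes a sum of a trace $v_{1,k-1}|_{x_1=0}$ and products $(v_{i,a}|_{x_1=0})(\partial_k\varphi_b)$ with $a+b=k-1$, while $U_k$ becomes an analogous sum of products of $U_a,\Psi_a$ and their first derivatives ($a\le k-1$) composed with smooth functions of $U_0$. A short index count --- using $H^{\mu-a}(\mathbb{R}^3_+)\hookrightarrow H^{\mu-a-1/2}(\mathbb{R}^2)$ and the observation that in each product at least one factor sits in $L^\infty$ because $\mu\ge3$ --- places every term in $H^{\mu-k}$, so $\varphi_k\in H^{\mu-k}(\mathbb{R}^2)$, $\Psi_k\in H^{\mu-k}(\mathbb{R}^3_+)$, and $U_k\in H^{\mu-k}(\mathbb{R}^3_+)$.

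Finally, to get the bound \eqref{59} I would track the constants through the very same inequalities. In \eqref{40} and in the sharpened form of \eqref{41} with $F(0)=0$, the top-order Sobolev norm appears linearly and the multiplicative constants involve only $L^\infty$-type (indeed $W^1_\infty$) norms of the data; an induction on $j$ therefore keeps all the high norms linear and controlled by $M_0$ from \eqref{60}, with the accumulated constant depending only on $\mu$, $\|U_0\|_{W^1_\infty(\mathbb{R}^3_+)}$, and $\|\varphi_0\|_{W^1_\infty(\mathbb{R}^2)}$. The part I expect to require the most care is exactly this last bookkeeping --- verifying that the constant in \eqref{59} never absorbs a high Sobolev norm --- together with the half-integer accounting from the trace theorem in the products defining $\varphi_k$; the rest is routine and can be compressed.
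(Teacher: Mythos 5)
Your proposal is correct and follows essentially the same route as the paper, which disposes of this lemma in one line: induction through the recursion \eqref{56}--\eqref{57} using the multiplicative (Moser-type) properties of Sobolev spaces and the trace theorem, with the gravity/$\check{U}$ combination handled exactly as in Remark \ref{rcheck}. The paper gives no more detail than that, so your expanded bookkeeping (including the tame structure of \eqref{40}--\eqref{41} that keeps the high norms linear in $M_0$) is precisely the intended argument.
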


The proof is almost evident and based on the multiplicative properties of Sobolev spaces (Remark \ref{rcheck} should be also taken into account).

\begin{definition}
Let $\mu\in\mathbb{N}$, $\mu \geq 3$. The initial data
$
(U_0,\varphi_0) \in H^{\mu }(\mathbb{R}^3_+)\times H^{\mu }(\mathbb{R}^2)
$
are said to be compatible up to order $\mu$ when $(U_j,\varphi_j)$ satisfy 
\begin{equation}
p_j|_{x_1=0}=0
\label{61}
\end{equation}
for $j=0,\ldots ,\mu$.
\label{d1}
\end{definition}

We are now ready to construct the approximate solution.

\begin{lemma}
Suppose the initial data (\ref{15}) are compatible up to order $\mu$ and satisfy the assumptions of Theorem \ref{t2} (i.e., (\ref{6}) for all $x\in\overline{\mathbb{R}^3_+}$ and (\ref{16'})). Then there exists a vector-function
$(U^{a},\varphi^a)\in H^{\mu +1}(\Omega_T)\times H^{\mu +1}(\partial\Omega_T)$,
that is further called the approximate solution to problem \eqref{newL}, \eqref{14}, \eqref{15}, such that
\begin{equation}
\partial_t^j\mathbb{L}(U^a,\Psi^a )|_{t=0}=0 \quad \mbox{for}\ j=0,\ldots , \mu -1,
\label{62}
\end{equation}
and it satisfies the boundary conditions (\ref{14}), where ${\Psi}^a =\chi (x_1)\varphi^a$. Moreover, the approximate solution obeys the estimate
\begin{equation}
\|U^a\|_{H^{\mu +1}(\Omega_T)}+\|\varphi^a\|_{H^{\mu +1}(\partial\Omega_T)}\leq C_1(M_0)
\label{63}
\end{equation}
and satisfies the hyperbolicity condition (\ref{6}) on $\overline{\Omega_T}$  as well as condition (\ref{16'}) on $\partial\Omega_T$, where $C_1=C_1(M_0)>0$ is a constant depending on $M_0$ (see (\ref{60})). Moreover, $\rho^a-\epsilon_1 =\rho (p^a,S^a)-\epsilon_1\in H^{\mu +1}(\Omega_T)$.
\label{l2}
\end{lemma}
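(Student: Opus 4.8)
The plan is to realize $(U^a,\varphi^a)$ by the classical device of absorbing the initial data into the interior equations, i.e.\ by lifting the initial traces furnished by Lemma~\ref{l1}. First I would apply Lemma~\ref{l1} to the data, obtaining $U_j$ and $\varphi_j$ for $j=0,\dots,\mu$ together with the bound \eqref{59}. Then, by a standard lifting (cf.\ \cite[Lemma 4.2.1]{Met}, \cite{CS}, \cite{Tr}), there is $U^a\in H^{\mu+1}(\Omega_T)$ with $\partial_t^jU^a|_{t=0}=U_j$ for all $j\le\mu$, with $\|U^a\|_{H^{\mu+1}(\Omega_T)}$ controlled by $\sum_j\|U_j\|_{H^{\mu-j}(\mathbb{R}^3_+)}$, and such that moreover $p^a|_{x_1=0}=0$ identically; the last property --- which gives the second boundary condition in \eqref{14} --- is available precisely because the compatibility conditions of Definition~\ref{d1} guarantee $p_j|_{x_1=0}=0$ for every $j\le\mu$, so the lifting may be chosen to leave $\{x_1=0\}$ invariant. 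At this stage one must also arrange $\rho^a-\epsilon_1\in H^{\mu+1}(\Omega_T)$: by Remark~\ref{rcheck} this is what makes $\widetilde{A}_1(U^a,\Psi^a)\partial_1\check{U}+Q(U^a)$ Sobolev regular despite the non-decaying background $\check{U}$, and it is obtained by taking the liftings of $p^a$ and $S^a$ relative to the constant state at infinity whose density equals $\epsilon_1$ (consistent with the assumptions of Theorem~\ref{t2}) and then invoking the Moser estimate \eqref{41} in its $F(0)=0$ form for $F=\rho-\epsilon_1$.

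Next I would produce $\varphi^a$ so that the first boundary condition in \eqref{14} holds exactly. Consistently with the way the traces $\varphi_j$ were defined, I take $\varphi^a$ to be the solution on $\partial\Omega_T$ of the linear transport equation
\[
\partial_t\varphi^a+v^a_2|_{x_1=0}\,\partial_2\varphi^a+v^a_3|_{x_1=0}\,\partial_3\varphi^a=v^a_1|_{x_1=0},\qquad \varphi^a|_{t=0}=\varphi_0 ,
\]
with coefficients and right-hand side read off from the $U^a$ already built. By construction $\partial_t\varphi^a=v^a_N|_{x_1=0}$ on $\partial\Omega_T$; applying $\partial_t^{j-1}$ to this equation at $t=0$ reproduces exactly the recursion \eqref{57} defining $\varphi_j$, so $\partial_t^j\varphi^a|_{t=0}=\varphi_j$ for $j\le\mu$; and the regularity $\varphi^a\in H^{\mu+1}(\partial\Omega_T)$, with bound by $\sum_j\|\varphi_j\|_{H^{\mu-j}(\mathbb{R}^2)}$, follows from standard linear transport estimates as in \cite{CS,Tr}. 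Setting $\Psi^a=\chi(x_1)\varphi^a$ I would then verify \eqref{62}: since $\mathbb{L}(U,\Psi)$ in \eqref{newL} is first order in $t$, for each $j\le\mu-1$ the quantity $\partial_t^j\mathbb{L}(U^a,\Psi^a)|_{t=0}$ is a fixed polynomial in $U_0,\dots,U_{j+1}$, $\varphi_0,\dots,\varphi_{j+1}$ and their spatial derivatives; by the very definition of $U_{j+1}$ and $\varphi_{j+1}$ through $\partial_t^j$ of \eqref{56} and \eqref{57}, and thanks to the traces arranged above, this expression vanishes.

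Finally, estimate \eqref{63} follows by combining the lifting bounds with \eqref{59}--\eqref{60}, so that $C_1$ depends only on $M_0$. The hyperbolicity condition \eqref{6} and condition \eqref{16'} hold at $t=0$ by the hypotheses on the data; since $H^{\mu+1}(\Omega_T)\hookrightarrow C^1(\overline{\Omega_T})$ for $\mu\geq3$, uniform continuity in $t$ makes both of them persist on the thin slab $t\in[0,T]$ once $T$ is small enough, while on the part $t<0$ of $\Omega_T$ one simply chooses the extension so as to keep these inequalities. The step I expect to be the main obstacle is the trace-compatibility bookkeeping at the corner $\{t=0\}\cap\{x_1=0\}$: one has to check that imposing simultaneously the identity $p^a|_{x_1=0}=0$, the transport equation for $\varphi^a$, and the full family $\partial_t^jU^a|_{t=0}=U_j$ ($j\le\mu$) does not over-determine the lifting --- this is exactly where the compatibility conditions of Definition~\ref{d1} and the recursive construction of $U_j,\varphi_j$ enter essentially. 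A secondary delicate point, already anticipated in Remark~\ref{rcheck}, is to keep the non-$L_2$ background $\check{U}$ under control so that both \eqref{63} and $\rho^a-\epsilon_1\in H^{\mu+1}(\Omega_T)$ genuinely hold.
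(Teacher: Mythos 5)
Your proposal follows the same overall route as the paper's proof: construct $(U^a,\varphi^a)$ from the Cauchy traces $(U_j,\varphi_j)$ furnished by Lemma~\ref{l1}, arrange the boundary conditions exactly, verify \eqref{62} from the recursive definition of the traces, and propagate the open conditions \eqref{6}, \eqref{16'} from $t=0$ to a short slab. The paper's text at this step is quite terse: it says one can ``choose'' a lifting of all the traces so that \eqref{14} holds, without specifying the mechanism. What you do differently --- and this is a genuine, if local, deviation --- is to split the construction: build $U^a$ by a lifting chosen so that $p^a|_{x_1=0}\equiv 0$ (available because of \eqref{61}), and then produce $\varphi^a$ not by a lifting but as the unique solution on $\partial\Omega_T$ of the linear transport equation that is precisely the first boundary condition in \eqref{14}. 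This makes the satisfaction of both boundary conditions automatic and makes the role of the compatibility conditions transparent: the recursion \eqref{58} and its higher-order analogues guarantee that $\partial_t^j\varphi^a|_{t=0}=\varphi_j$ is a consequence, not an additional constraint, so the over-determination you worry about at the end is in fact a non-issue --- Definition~\ref{d1} and the way $\varphi_j$ are defined are exactly what make the lifting of $U^a$ and the transport solution $\varphi^a$ compatible at the corner.

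Two caveats. First, your reference to ``standard linear transport estimates as in \cite{CS,Tr}'' for the regularity of $\varphi^a$ is slightly misplaced: those works, like this paper, build the approximate solution by lifting the $t=0$ traces, not by solving a boundary transport problem, so the citation does not quite support the claim. Second, the regularity count for the transport route is marginally weaker than for a direct lifting: with $\varphi_0\in H^\mu(\mathbb{R}^2)$ and boundary traces of $U^a$ in $H^{\mu+1/2}(\partial\Omega_T)$, propagation along characteristics and repeated differentiation of the transport equation give $\partial_t^j\varphi^a\in C([0,T];H^{\mu-j}(\mathbb{R}^2))$, hence $\varphi^a\in H^\mu(\partial\Omega_T)$, not $H^{\mu+1}$. (The paper's own claim $\varphi^a\in H^{\mu+1}(\partial\Omega_T)$ is itself a half-derivative optimistic for a lifting of traces $\varphi_j\in H^{\mu-j}$; the usual count gives $H^{\mu+1/2}$, as in \cite{Met,CS}, so the discrepancy is shared, but your transport construction loses the extra half derivative as well.) In practice this is harmless because the Nash--Moser argument only needs the approximate solution a fixed number of derivatives above $m$, and $\mu=m+7$ leaves plenty of room, but it is worth flagging if you want \eqref{63} to hold verbatim in $H^{\mu+1}$.
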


\begin{proof} 
Consider functions $U^a \in H^{\mu +1}(\mathbb{R}\times\mathbb{R}^3_+)$ and $\varphi^a\in H^{\mu +1}(\mathbb{R}^3)$ such that
\[
\partial_t^jU^a|_{t=0}=U_j\in H^{\mu -j}(\mathbb{R}^3_+),\quad
\partial_t^j\varphi^a|_{t=0}=\varphi_j\in H^{\mu-j}(\mathbb{R}^2)
\quad \mbox{for}\ j=0,\ldots ,\mu ,
\]
where $U_j$ and $\varphi_j$ are given by Lemma \ref{l1}. Thanks to \eqref{58} and \eqref{61} we can choose $U^a $ and $\varphi^a$ that satisfy the boundary conditions (\ref{14}). By using a cut-off $C^{\infty}_0$ function we can suppose that $(U^a,\varphi^a)$ vanishes outside of the interval $[-T,T]$, i.e., $(U^{a},\varphi^a)\in H^{\mu +1}(\Omega_T)\times H^{\mu +1}(\partial\Omega_T)$. Applying Sobolev's embeddings, we rewrite estimate (\ref{59}) as
\begin{equation}
\sum_{j=1}^{\mu}\left( \|U_j\|_{H^{\mu -j}(\mathbb{R}^3_+)}+\|\varphi_j\|_{H^{\mu -j}(\mathbb{R}^2)} \right)
\leq C(M_0),
\label{64}
\end{equation}
where $C=C(M_0)>0$ is a constant depending on $M_0$. The estimate (\ref{63}) follows from (\ref{64}) and the continuity of the lifting operators from the hyperplane $t=0$ to $\mathbb{R}\times\mathbb{R}^3_+$.
Conditions (\ref{62}) hold thanks to the properties of $(U_j,\varphi_j)$ given by Lemma \ref{l1}.
At last, since $(U^{a},\varphi^a)$ satisfies the hyperbolicity condition (\ref{6}) and  condition (\ref{16'})
at $t=0$, in the above procedure we can choose $(U^{a},\varphi^a)$ that it satisfies (\ref{6}) and (\ref{16'}) for all times $t\in[-T, T]$. The condition $\rho^a-\epsilon_1 \in H^{\mu +1}(\Omega_T)$ is just an assumption on the state equation.
\end{proof}

Without loss of generality we can suppose that
\begin{equation}
\|U_0\|_{H^{\mu}(\mathbb{R}^3_+)}+\|\varphi_0\|_{H^{\mu}(\mathbb{R}^2)}\leq 1,\quad 
\|\varphi_0\|_{H^{\mu}(\mathbb{R}^2)}\leq 1/2.
\label{65}
\end{equation}
Then for a sufficiently short time interval $[0,T]$ the smooth solution which existence we are going to prove satisfies
$\|\varphi\|_{L_{\infty}([0,T]\times\mathbb{R}^2)}\leq 1$ that implies $\partial_1\Phi\geq 1/2$
(recall that $\|\chi'\|_{L_{\infty}(\mathbb{R})}<1/2$, see Section \ref{s3}). Let $\mu$ is an integer number that will appear in the regularity assumption for the initial data in the existence theorem for problem \eqref{newL}, \eqref{14}, \eqref{15}. Running ahead, we take $\mu=m+7$, with $m\geq 6$ (see Theorem \ref{t2}).
In the end of this section  we will see that this choice is suitable. Taking into account (\ref{65}),
we rewrite (\ref{63}) as
\begin{equation}
\|U^a\|_{H^{m +8}(\Omega_T)}+\|\varphi^a\|_{H^{m +8}(\partial\Omega_T)}\leq C_*,
\label{66}
\end{equation}
where $C_*=C_1(1)$.

Let us introduce
\begin{equation}
f^a:=\left\{ \begin{array}{cr}
- \mathbb{L}(U^a,{\Psi}^a ) & \quad \mbox{for}\ t>0,\\
0 & \ \mbox{for}\ t<0.\end{array}\right.
\label{67}
\end{equation}
Since $(U^{a},\varphi^a)\in H^{m +8}(\Omega_T)\times H^{m +8}(\partial\Omega_T)$, using (\ref{62}), we get
$f^a \in H^{m +7}(\Omega_T)$ and
\begin{equation}
\|f^a\|_{H^{m + 7}(\Omega_T)}\leq \delta_0 (T),
\label{68}
\end{equation}
where the constant $\delta_0(T)\rightarrow 0$ as $T\rightarrow 0$. The crucial role in the proof of the fact that $f^a$ belongs to a Sobolev space is played by the {\it presence of gravity} (see Remark \ref{rcheck}). 
To prove estimate (\ref{68}) we use the Moser-type and embedding inequalities and the fact that $f^a$ vanishes in the past.
Then, given the approximate solution defined in Lemma \ref{l2}, $(U ,\varphi)= (U^a,\varphi^a)+ (\widetilde{U} ,\tilde{\varphi})$ is a solution of the original problem \eqref{newL}, \eqref{14}, \eqref{15}
on $[0,T]\times\mathbb{R}^3_+$ if $(\widetilde{U} ,\tilde{\varphi})$ satisfies the following problem on $\Omega_T$ (tildes are dropped):
\begin{align}
{\cal L}(U ,{\Psi})=f^a\quad\mbox{in}\ \Omega_T, \label{69}\\[3pt]
{\cal B}(U ,\varphi )=0\quad\mbox{on}\ \partial\Omega_T,\label{70}
\\[3pt]
(U,\varphi )=0\quad \mbox{for}\ t<0,\label{71}
\end{align}
where ${\cal L}(U ,{\Psi}):=\mathbb{L}(U^a +U ,{\Psi}^a+{\Psi}) -
\mathbb{L}(U^a ,{\Psi}^a),\quad
{\cal B}(U ,\varphi):=\mathbb{B}(U^a +U ,\varphi^a+\varphi )$.
From now on we concentrate on the proof of the existence of solutions to problem (\ref{69})--(\ref{71}).

We solve problem (\ref{69})--(\ref{71}) by a suitable Nash-Moser-type iteration scheme. In short, this scheme is a modified Newton's scheme and at each Nash-Moser iteration step we smooth the coefficient $u_n$ of a corresponding linear
problem for $\delta u_n =u_{n+1}-u_n$. Errors of a classical Nash-Moser iteration are the ``quadratic'' error of Newton's scheme and the ``substitution'' error caused by the application of smoothing operators $S_{\theta}$  (see, e.g., \cite{Herm} and references therein). As in \cite{CS,Tr}, in our case the Nash-Moser procedure is not completely standard and we have the additional error caused by the introduction of an intermediate (or modified) state $u_{n+1/2}$ satisfying some nonlinear constraints. In our case, the main constraint is condition \eqref{20} that was required to be fulfilled for the basic state (\ref{17}). Also the additional error is caused by dropping the zero-order term in $\Psi$ in the  linearized interior equations written in terms of the ``good unknown'' (see (\ref{22})--(\ref{24})). We first list the important properties of smoothing operators \cite{Al,CS,Herm}.

\begin{proposition}
There exists such a family $\{S_{\theta}\}_{\theta\geq 1}$ of smoothing operators in $H^s(\Omega_T)$ acting on the class of functions vanishing in the past that
\begin{align}
\|S_{\theta}u\|_{H^{\beta}(\Omega_T) }\leq C\theta^{(\beta-\alpha )_+}\|u\|_{H^{\alpha}(\Omega_T) }, & \qquad \alpha ,\beta \geq 0,
\label{72}\\
\|S_{\theta}u-u\|_{H^{\beta}(\Omega_T) }\leq C\theta^{\beta-\alpha }\|u\|_{H^{\alpha}(\Omega_T) }, & \qquad 0\leq \beta \leq \alpha ,\label{73}\\
\bigl\|\frac{d}{d\theta}S_{\theta}u\bigr\|_{H^{\beta}(\Omega_T) }\leq C\theta^{\beta-\alpha -1}\|u\|_{H^{\alpha}(\Omega_T) }, & \qquad \alpha ,\beta \geq 0,\label{74}
\end{align}
where $C>0$ is a constant, and $(\beta-\alpha )_+:=\max (0,\beta -\alpha )$.  Moreover, there is another family of smoothing operators (still denoted
$S_{\theta}$) acting on functions defined on the boundary $\partial\Omega_T$ and meeting properties (\ref{72})--(\ref{74}), with the norms $\|\cdot \|_{H^{\alpha}(\partial\Omega_T)}$.
\label{p1}
\end{proposition}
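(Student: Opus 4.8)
The plan is to obtain $S_\theta$ by the classical three-step device: extend a function off $\Omega_T$ to the whole space $\mathbb{R}^{1+3}$ in a way that respects the condition ``$=0$ for $t<0$'', apply a genuine smoothing family on the whole space, and restrict back to $\Omega_T$. For the extension I would compose a Stein-type extension $E_1$ in the variable $x_1$ (from $\mathbb{R}_+$ to $\mathbb{R}$, bounded on every $H^s$, $s\ge 0$) with a Stein-type extension $E_2$ in $t$ that modifies only the region $t>T$ (again bounded on every $H^s$). Since $E_1$ acts only in $x$ it commutes with the support condition $\{t<0\}$, and since $E_2$ does not affect $\{t<0\}$, the composite $E:=E_2E_1$ maps $\{u\in H^s(\Omega_T):u=0\ \text{for}\ t<0\}$ continuously into $\{v\in H^s(\mathbb{R}^{1+3}):v=0\ \text{for}\ t<0\}$ and satisfies $(Eu)|_{\Omega_T}=u$.

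For the whole-space smoothing I would deliberately treat the time direction separately in order to keep causality. Let $R_\theta=\chi(D_x/\theta)$ be the Fourier multiplier in the space variables $x=(x_1,x_2,x_3)$ with $\chi\in C^\infty_0(\mathbb{R}^3)$ equal to $1$ near the origin, and let $T_\theta$ be a one-sided mollification in $t$, $T_\theta v(t,\cdot)=\theta\int_0^\infty\rho(\theta s)\,v(t-s,\cdot)\,ds$, where $\rho\in C^\infty_0((0,\infty))$ has $\int\rho=1$ and a sufficiently large number of vanishing moments (that number depending only on the finite range of Sobolev indices relevant to the Nash--Moser scheme). Because the $s$-integration reaches only backwards in time, $T_\theta$ sends functions vanishing for $t<0$ to functions vanishing for $t<0$, and $R_\theta$ does so trivially since it is local in $t$. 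I would then set $S_\theta u:=(R_\theta T_\theta Eu)|_{\Omega_T}$. Properties \eqref{72}--\eqref{74} for $S_\theta$ on $\Omega_T$ follow from the corresponding estimates for $R_\theta T_\theta$ on $\mathbb{R}^{1+3}$ together with the boundedness of $E$ and of the restriction map $H^s(\mathbb{R}^{1+3})\to H^s(\Omega_T)$; for \eqref{73} one first forms $R_\theta T_\theta Eu-Eu$ (using $(Eu)|_{\Omega_T}=u$) before restricting, and for \eqref{74} one differentiates in $\theta$ inside the whole-space operator. The estimates for $R_\theta T_\theta$ are in turn standard: its symbol $\chi(\eta/\theta)\widehat{\rho}(\tau/\theta)$ is, up to rapidly decaying tails in $\tau$, a bounded symbol supported in $|(\tau,\eta)|\lesssim\theta$, while the symbol of $R_\theta T_\theta-I$ vanishes at the origin to the order dictated by the number of vanishing moments of $\rho$, which is exactly what produces the factor $\theta^{\beta-\alpha}$ in \eqref{73}; differentiation in $\theta$ gains one further power of $\theta^{-1}$, giving \eqref{74}. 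The boundary version is obtained by the identical recipe on $(-\infty,T]\times\mathbb{R}^2$, where no $x_1$-extension is needed.

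Since $Eu$ vanishes for $t<0$, so do $T_\theta Eu$, $R_\theta T_\theta Eu$, and hence $S_\theta u$: the family acts on the class of functions vanishing in the past, as required. The one point that needs genuine care, and the only real obstacle, is precisely this compatibility of causality with the smoothing estimates. A symmetric mollifier in $t$ would spread supports backwards and destroy ``$=0$ for $t<0$'', so one is forced to use the one-sided kernel $\theta\rho(\theta\cdot)$; but then the symbol of $T_\theta-I$ no longer vanishes identically near $\tau=0$ (it cannot, by Paley--Wiener, for a compactly supported one-sided kernel), only to finite order, which is why $\rho$ must be chosen with enough vanishing moments to cover the required range of exponents in \eqref{72}--\eqref{74}. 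Everything else -- the mapping properties of the Stein-type extensions on all $H^s$, the Littlewood--Paley estimates for $R_\theta$, and the bookkeeping of the anisotropic $(\tau,\eta)$-estimates for the composite -- is routine and can be quoted, with obvious modifications, from \cite{Al,CS,Herm}.
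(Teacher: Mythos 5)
The paper states Proposition~\ref{p1} without proof, referring the reader to \cite{Al,CS,Herm}; there is therefore no in-paper argument to compare against, and the question is simply whether your construction is correct. It is. The genuinely delicate point is exactly the one you isolate: a symmetric mollifier in $t$ would spread supports backwards and destroy the condition ``$u=0$ for $t<0$'', so one must use a one-sided kernel $\rho\in C^\infty_0((0,\infty))$, and then Paley--Wiener rules out $\widehat\rho\equiv 1$ near the origin, forcing one to settle for finite-order vanishing of $\widehat\rho-1$, i.e.\ finitely many vanishing moments of $\rho$. That is sufficient here because the Nash--Moser iteration of Section~4 only ever invokes \eqref{72}--\eqref{74} for Sobolev indices in a fixed finite window (up to $\tilde\alpha+4$), so the number of moments can be fixed in advance. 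Your factorization $S_\theta=(R_\theta T_\theta E\,\cdot)|_{\Omega_T}$ with a Stein extension in $x_1$ and a past-preserving extension in $t$ is the right architecture, and the symbol computations for \eqref{72}, \eqref{73}, \eqref{74} all go through.

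Two minor points worth making explicit. First, a kernel $\rho$ supported in $(0,\infty)$ with $\int\rho=1$ and several vanishing moments is necessarily signed; this is harmless but should be said. Second, for \eqref{74} with $\beta<\alpha$ the crucial symbol is $-\tau\theta^{-2}\widehat\rho\,'(\tau/\theta)$ (plus the $\chi'$ term, which is supported away from $\eta=0$ and causes no trouble): it is the vanishing of $\widehat\rho\,'$ to sufficiently high order at the origin, and not merely of $\widehat\rho-1$, that makes the low-frequency region compatible with the bound $\theta^{\beta-\alpha-1}$. Your phrase ``sufficiently large number of vanishing moments'' covers this, but it would sharpen the write-up to record that one needs $k\ge\alpha-\beta-1$ vanishing moments over the relevant range of $(\alpha,\beta)$.
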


Now, following \cite{CS,Tr}, we describe the iteration scheme for problem (\ref{69})--(\ref{71}). We choose
\[
U_0 =0,\quad \varphi_0=0
\]
and assume that $(U_k, \varphi_k)$ are already given for $k=0,\ldots ,n$.
Moreover, let $(U_k,\varphi_k)$ vanish in the past, i.e., they satisfy (\ref{71}).  We define
\[
U_{n+1}=U_n+\delta U_n,\quad \varphi_{n+1}=\varphi_n+\delta \varphi_n,
\]
where the differences $\delta{U}_n$ and $\delta \varphi_n$ solve the linear problem
\begin{equation}
\left\{\begin{array}{lr}
\mathbb{L}'_e({U}^a +{U}_{n+1/2} ,{\Psi}^a+{\Psi}_{n+1/2})\delta\dot{{U}}_n={f}_n &\quad\mbox{in}\ \Omega_T, \\[6pt]
\mathbb{B}'_{n+1/2}(\delta\dot{{U}}_n,\delta {\varphi}_n)={g}_n  & \quad\mbox{on}\ \partial\Omega_T,\\[6pt]
(\delta\dot{{U}}_n,\delta \varphi_n)=0 &\quad \mbox{for}\ t<0.
\end{array}\right.
\label{75}
\end{equation}
Here
\begin{equation}
\delta\dot{U}_n:= \delta{U}_n-\frac{\delta\Psi_n}{\partial_1(\Phi^a+\Psi_{n+1/2})}\,\partial_1(\check{U}+{U}^a+{U}_{n+1/2})
\label{76}
\end{equation}
is the ``good unknown'' (cf. (\ref{22})),
\[
\mathbb{B}'_{n+1/2}:=\mathbb{B}'_e(({U}^a +{U}_{n+1/2})|_{x_1=0} ,\varphi^a+\varphi_{n+1/2}),
\]
the operators $\mathbb{L}'_e$ and $\mathbb{B}'_e$ are defined in (\ref{24}), (\ref{25}), and $({U}_{n+1/2},\varphi_{n+1/2})$ is a smooth modified state such that $({U}^a +{U}_{n+1/2},\varphi^a+\varphi_{n+1/2})$ satisfies constraints (\ref{19})--(\ref{21}) ($\Psi_n$, ${\Psi}_{n+1/2}$, and $\delta\Psi_n$ are associated to $\varphi_n$, $\varphi_{n+1/2}$, and $\delta\varphi_n$ like ${\Psi}$ is associated to $\varphi$). The right-hand sides ${f}_n$ and ${g}_n$  are defined through the accumulated errors at the step $n$.

The errors of the iteration scheme are defined from the following chains of decompositions:
\[
\begin{array}{l}
{\cal L}({U}_{n+1} ,{\Psi}_{n+1})-{\cal L}({U}_{n} ,{\Psi}_{n})\\
\quad =\mathbb{L}'({U}^a +{U}_{n} ,{\Psi}^a+{\Psi}_{n})(\delta{{U}}_n,\delta{\Psi}_{n})+{e}'_n\\
\quad =\mathbb{L}'({U}^a +S_{\theta_n}{U}_{n} ,{\Psi}^a+S_{\theta_n}{\Psi}_{n})(\delta{{U}}_n,\delta{\Psi}_{n})+{e}'_n+{e}''_n
\\
\quad= \mathbb{L}'({U}^a +{U}_{n+1/2} ,{\Psi}^a+{\Psi}_{n+1/2})(\delta{{U}}_n,\delta{\Psi}_{n})+{e}'_n+{e}''_n+{e}'''_n\\
\quad =\mathbb{L}'_e({U}^a +{U}_{n+1/2},{\Psi}^a+{\Psi}_{n+1/2})\delta\dot{{U}}_n+{e}'_n+{e}''_n+{e}'''_n+{D}_{n+1/2}\delta{\Psi}_{n}
\end{array}
\]
and
\[
\begin{array}{l}
{\cal B}({U}_{n+1}|_{x_1=0},\varphi_{n+1})-{\cal B}({U}_{n}|_{x_1=0},\varphi_{n})
\\ \quad =\mathbb{B}'(({U}^a +{U}_{n})|_{x_1=0},\varphi^a+\varphi_{n})(\delta{{U}}_n|_{x_1=0},\delta \varphi_{n})+\tilde{e}'_n
\\ \quad =\mathbb{B}'(({U}^a +S_{\theta_n}{U}_{n})|_{x_1=0},\varphi^a+S_{\theta_n}\varphi_{n})(\delta{{U}}_n|_{x_1=0},\delta \varphi_{n})+\tilde{e}'_n+\tilde{e}''_n
\\ \quad =\mathbb{B}'_{n+1/2}(\delta\dot{{U}}_n,\delta\varphi_n)+\tilde{e}'_n+\tilde{e}''_n+\tilde{e}'''_n,
\end{array}
\]
where $S_{\theta_n}$ are smoothing operators enjoying the properties of Proposition \ref{p1}, with the sequence $(\theta_n)$ defined by
\[
\theta_0\geq 1,\quad \theta_n=\sqrt{\theta_0+n} ,
\]
and we use the notation
\[
{D}_{n+1/2}:= \frac{1}{\partial_1(\Phi^a+\Psi_{n+1/2})}\,\partial_1\left\{ \mathbb{L}(U^a +U_{n+1/2} ,{\Psi}^a+{\Psi}_{n+1/2})\right\}.
\]
The errors ${e}'_n$ and  $\tilde{e}'_n$ are the usual quadratic errors of Newton's method, and ${e}''_n$, 
$\tilde{ e}''_n$ and ${e}'''_n$, $\tilde{e}'''_n$ are the first and the second substitution errors
respectively.

Let
\begin{equation}
{e}_n:={e}'_n+{e}''_n+{e}'''_n+{D}_{n+1/2}\delta{\Psi}_{n}, \quad
\tilde{e}_n:= \tilde{e}'_n+\tilde{e}''_n+\tilde{e}'''_n,
\label{77}
\end{equation}
then the accumulated errors at the step $n\geq 1$ are
\begin{equation}
{E}_n=\sum_{k=0}^{n-1}{e}_k,\quad \widetilde{E}_n=\sum_{k=0}^{n-1}\tilde{e}_k,
\label{78}
\end{equation}
with ${E}_0:=0$ and $\widetilde{E}_0:=0$. The right-hand sides ${f}_n$ and ${g}_n$ are recursively computed from the equations
\begin{equation}
\sum_{k=0}^{n}{f}_k+S_{\theta_n}{E}_n=S_{\theta_n}{f}^a,\quad
\sum_{k=0}^{n}{g}_k+S_{\theta_n}\widetilde{E}_n=0,
\label{79}
\end{equation}
where ${f}_0:=S_{\theta_0}{f}^a$ and ${g}_0:=0$. Since $S_{\theta_N}\rightarrow I$ as $N\rightarrow \infty$, one can show that we formally obtain
the solution to problem (\ref{69})--(\ref{71}) from ${\cal L}(U_{N} ,{\Psi}_{N})\rightarrow {f}^a$ and
${\cal B}(U_{N}|_{x_1=0},\varphi_{N})\rightarrow 0$, provided that $({e}_N,\tilde{e}_N)\rightarrow 0$.

\begin{remark}{\rm 
In general, the realization of the Nash-Moser procedure for problem (\ref{69})--(\ref{71}) below is much simpler as in \cite{Tr} for current-vortex sheets.  As in \cite{CS} and unlike \cite{Tr}, we work in usual Sobolev spaces $H^s$
(in \cite{Tr} one works in the anisotropic weighted Sobolev spaces $H^s_*$). More precisely, in \cite{CS} the exponentially weighted Sobolev spaces $H^s_{\gamma}:=e^{\gamma t}H^s$ were used, but for $H^s_{\gamma}$,
Sobolev's embeddings, Moser-type inequalities, etc. are internally the same as for the usual Sobolev spaces $H^s$. Therefore, in some places below our calculations are almost the same as in \cite{CS}. However, for convenience of the reader we prefer to present all the calculations (at least, in brief). Moreover, since, unlike \cite{CS}, we do not assume that our initial data are close to a constant solution  and in our tame estimate \eqref{38} we lose, as \cite{Tr}, ``one derivative from the front'', somewhere we have to modify arguments of \cite{CS}.
}
\end{remark}

Below we closely follow the plan of \cite{CS} and \cite{Tr}. Let us first formulate an inductive hypothesis.
As in \cite{Tr} and unlike \cite{CS}, we do not require more regularity for $\delta \varphi_k$ in our inductive hypothesis.

\paragraph{Inductive hypothesis.} Given a small number $\delta >0$, the integer $\alpha :=m+1$, and an integer $\tilde{\alpha}$, our inductive hypothesis reads:
\[
(H_{n-1})\quad \left\{
\begin{array}{ll}
a)\;& \forall\, k=0,\ldots , n-1,\quad \forall s\in [3,\tilde{\alpha}]\cap\mathbb{N},\\[3pt]
 & \|\delta U_k\|_{H^s(\Omega_T)} +\|\delta \varphi_k\|_{H^s(\partial\Omega_T)}\leq \delta\theta_k^{s-\alpha -1}\Delta_k,\\[6pt]
b) & \forall\, k=0,\ldots , n-1,\quad \forall s\in [3,\tilde{\alpha}-2]\cap\mathbb{N},\\[3pt]
 & \|{\cal L}(U_k,{\Psi}_k)-f^a\|_{H^s(\Omega_T)}\leq 2\delta\theta_k^{s-\alpha -1},\\[6pt]
c) & \forall\, k=0,\ldots , n-1,\quad \forall s\in [4,\alpha ]\cap\mathbb{N},\\[3pt]
 & \|{\cal B}(U_k|_{x_1=0},\varphi_k)\|_{H^s(\partial\Omega_T)}\leq \delta\theta_k^{s-\alpha -1},
\end{array}\right.
\]
where $\Delta_k= \theta_{k+1}-\theta_k$. Note that the sequence $(\Delta_n)$ is decreasing and tends to zero, and
\[
\forall\, n\in\mathbb{N},\quad \frac{1}{3\theta_n}\leq\Delta_n=\sqrt{\theta_n^2+1} -\theta_n\leq \frac{1}{2\theta_n}.
\]
Recall that $(U_k,\varphi_k)$ for $k=0,\ldots ,n$ are also assumed to satisfy (\ref{71}). Running a few steps forward, we observe that we will need to use inequalities (\ref{66}) and (\ref{68}) with $m=\tilde{\alpha}-4$. That is, we now choose $\tilde{\alpha}=m+4$. 
Our goal is to prove that ($H_{n-1}$) implies ($H_n$) for a suitable choice of parameters $\theta_0\geq 1$ and $\delta >0$, and for a sufficiently short time $T>0$. After that we shall prove ($H_0$). From now on we assume that ($H_{n-1}$) holds.  As in \cite{CS}, we have the following consequences of ($H_{n-1}$).

\begin{lemma}
If $\theta_0$ is big enough, then for every $k=0,\ldots ,n$ and for every integer $s\in [3,\tilde{\alpha}]$ we have
\begin{align}
\|{U}_k \|_{H^s(\Omega_T)}+\| \varphi_k\|_{H^{s}(\partial\Omega_T)}\leq\delta\theta_k^{(s-\alpha )_+}, &\quad \alpha\neq s,\label{80}\\[3pt]
\|{U}_k \|_{H^{\alpha}(\Omega_T)}+\| \varphi_k\|_{H^{\alpha}(\partial\Omega_T)}\leq\delta\log \theta_k, & \label{81}
\end{align}
\begin{equation}
\|(I-S_{\theta_k}){U}_k \|_{H^s(\Omega_T)}+\|(1-S_{\theta_k}) \varphi_k\|_{H^{s}(\partial\Omega_T)}\leq C\delta\theta_k^{s-\alpha }.\label{82}
\end{equation}
For every $k=0,\ldots ,n$ and for every integer $s\in [3,\tilde{\alpha}+4]$ we have
\begin{align}
\|S_{\theta_k}{U}_k \|_{H^s(\Omega_T)}+\| S_{\theta_k} \varphi_k\|_{H^{s}(\partial\Omega_T)}\leq C\delta\theta_k^{(s-\alpha )_+}, &\quad \alpha\neq s,\label{83}\\[3pt]
\|S_{\theta_k}{U}_k \|_{H^{\alpha}(\Omega_T)}+\| S_{\theta_k}\varphi_k\|_{H^{\alpha}(\partial\Omega_T)}\leq C\delta\log \theta_k. &  \label{84}
\end{align}
\label{l3}
\end{lemma}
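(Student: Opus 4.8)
This lemma is purely \emph{arithmetic}: all of \eqref{80}--\eqref{84} follow from part a) of the inductive hypothesis $(H_{n-1})$ alone, by telescoping the increments, comparing the resulting sums in $\theta_j$ with integrals, and then applying the smoothing estimates \eqref{72}--\eqref{73} of Proposition \ref{p1}. The plan follows \cite{CS} closely. First I would write $U_k=\sum_{j=0}^{k-1}\delta U_j$ and $\varphi_k=\sum_{j=0}^{k-1}\delta\varphi_j$ (legitimate since $U_0=\varphi_0=0$), so that part a) of $(H_{n-1})$ yields, for every integer $s\in[3,\tilde\alpha]$ and every $k\le n$,
\[
\|U_k\|_{H^s(\Omega_T)}+\|\varphi_k\|_{H^{s}(\partial\Omega_T)}\le\delta\sum_{j=0}^{k-1}\theta_j^{\,s-\alpha-1}\Delta_j .
\]

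It then remains to bound $\sum_{j=0}^{k-1}\theta_j^{\,s-\alpha-1}\Delta_j$ by comparison with $\int\theta^{\,s-\alpha-1}\,{\rm d}\theta$, using $\theta_j=\sqrt{\theta_0+j}$ and $\tfrac1{3\theta_j}\le\Delta_j\le\tfrac1{2\theta_j}$. For $s>\alpha$ the integrand is nondecreasing, so $\theta_j^{\,s-\alpha-1}\Delta_j\le\int_{\theta_j}^{\theta_{j+1}}\theta^{\,s-\alpha-1}\,{\rm d}\theta$ and the sum is $\le(s-\alpha)^{-1}\theta_k^{\,s-\alpha}\le\theta_k^{\,s-\alpha}$, giving \eqref{80}. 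For $s<\alpha$ one uses $\theta_j^{\,s-\alpha-1}\Delta_j\le\tfrac12\theta_j^{\,s-\alpha-2}$ and the convergence of $\sum_j\theta_j^{\,s-\alpha-2}$, whose value tends to $0$ as $\theta_0\to\infty$; choosing $\theta_0$ large makes the sum $\le1=\theta_k^{(s-\alpha)_+}$, again \eqref{80}. For $s=\alpha$ the sum is $\le\sum_{j=0}^{k-1}\tfrac1{2(\theta_0+j)}\le\tfrac1{2\theta_0}+\log(\theta_k/\theta_0)\le\log\theta_k$ for $\theta_0$ large, which is \eqref{81}.

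Next, \eqref{82} follows from the approximation property \eqref{73} with the pair $(s,\tilde\alpha)$, $s\le\tilde\alpha$: since $\tilde\alpha=m+4>m+1=\alpha$, the already established \eqref{80} gives $\|U_k\|_{H^{\tilde\alpha}}\le\delta\theta_k^{\,\tilde\alpha-\alpha}$, whence $\|(I-S_{\theta_k})U_k\|_{H^s}\le C\theta_k^{\,s-\tilde\alpha}\|U_k\|_{H^{\tilde\alpha}}\le C\delta\theta_k^{\,s-\alpha}$, and similarly for $\varphi_k$ with the boundary smoothing operators. Finally \eqref{83}--\eqref{84} follow from \eqref{72}: for $s\in[3,\tilde\alpha]$ take $\beta=\alpha'=s$, so $\|S_{\theta_k}U_k\|_{H^s}\le C\|U_k\|_{H^s}$ and \eqref{80}--\eqref{81} conclude; for $s\in(\tilde\alpha,\tilde\alpha+4]$ take $\alpha'=\tilde\alpha$, $\beta=s$, so $\|S_{\theta_k}U_k\|_{H^s}\le C\theta_k^{\,s-\tilde\alpha}\|U_k\|_{H^{\tilde\alpha}}\le C\delta\theta_k^{\,s-\alpha}$, which is \eqref{83} since $(s-\alpha)_+=s-\alpha$ there.

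The only slightly delicate point is the constant bookkeeping in \eqref{80}--\eqref{81}, which are stated with constant exactly $1$ in front of $\delta\theta_k^{(s-\alpha)_+}$, respectively $\delta\log\theta_k$: the integral comparisons must be carried out precisely enough, and $\theta_0$ taken large enough, to absorb the numerical factors such as $(s-\alpha)^{-1}$, $(\alpha-s)^{-1}\theta_0^{\,s-\alpha}$, and the $\tfrac1{2\theta_0}$ term in the logarithmic case. This is precisely the role of the hypothesis ``$\theta_0$ big enough''; everything else is routine.
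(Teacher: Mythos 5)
Your proof is correct and is exactly the argument the paper has in mind: it gives no details beyond noting that the lemma follows ``as in \cite{CS}'' from point a) of $(H_{n-1})$, with \eqref{82}--\eqref{84} deduced from \eqref{80}, \eqref{81} and Proposition \ref{p1}, which is precisely your telescoping-plus-smoothing argument. Your handling of the three cases $s>\alpha$, $s<\alpha$, $s=\alpha$ and of the constant-one bookkeeping via ``$\theta_0$ big enough'' matches the standard computation being cited.
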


Estimates (\ref{82})--(\ref{84}) follow from (\ref{80}), (\ref{81}), and Proposition \ref{p1}. Moreover, (\ref{82}) and (\ref{83}) hold actually for every integer $s\geq 3$ but below we will need them only for $s\in [3,\tilde{\alpha}]$ and $s\in [3,\tilde{\alpha}+4]$ respectively.

\paragraph{Estimate of the quadratic errors.} The quadratic errors 
\[
{e}'_k= {\cal L}(U_{k+1} ,{\Psi}_{k+1})-{\cal L}(U_{k} ,{\Psi}_{k})
-{\cal L}'(U_{k} ,{\Psi}_{k})(\delta{U}_k,\delta{\Psi}_{k}),
\]
\[
\tilde{e}'_k= \bigl({\cal B}(U_{k+1},\varphi_{k+1})-{\cal B}(U_{k},\varphi_{k})
-{\cal B}'(U_{k},\varphi_{k})(\delta{U}_k,\delta \varphi_{k})\bigr)|_{x_1=0}
\]
can be rewritten as
\begin{equation}
{e}'_k=\int_0^1(1-\tau )\mathbb{L}''({U}^a+{U}_k +\tau\delta{U}_k, {\Psi}^a+{\Psi}_k
+\tau\delta{\Psi}_k)\bigl((\delta{U}_k,\delta{\Psi}_k),
(\delta{U}_k,\delta{\Psi}_k)\bigr) d\tau ,
\label{85}
\end{equation}
\begin{equation}
\tilde{e}'_k=\frac{1}{2}\,\mathbb{B}''\bigl(
(\delta{U}_k|_{x_1=0},\delta \varphi_k),(\delta{U}_k|_{x_1=0},\delta \varphi_k)\bigr) 
\label{86}
\end{equation}
by using the second derivatives of the operators $\mathbb{L}$ and $\mathbb{B}$:
\[
\mathbb{L}''(\widehat{U},\widehat{{\Psi}})((U',{\Psi}'),(U'',{\Psi}'')):=
\frac{\rm d}{{\rm d}\varepsilon}\mathbb{L}'(U_{\varepsilon},{\Psi}_{\varepsilon})(U',{\Psi}')|_{\varepsilon =0}
\qquad (\mathbb{L}'(\widehat{U},\widehat{{\Psi}})(U'',{\Psi}''):=\frac{\rm d}{{\rm d}\varepsilon}\mathbb{L}
(U_{\varepsilon},{\Psi}_{\varepsilon})),
\]
\[
 \mathbb{B}''((W',\varphi'),(W'',\varphi'')):=
\frac{\rm d}{{\rm d}\varepsilon}\mathbb{B}'(W_{\varepsilon},\varphi_{\varepsilon})(W',\varphi')|_{\varepsilon =0}
\qquad (\mathbb{B}'(\widehat{U}|_{x_1=0},\hat{\varphi})(W'',{\varphi}'')=\frac{\rm d}{{\rm d}\varepsilon}\mathbb{B}(W_{\varepsilon},{\varphi}_{\varepsilon}) ),
\]
where $U_{\varepsilon}=\widehat{U}+\varepsilon U''$, $W_{\varepsilon}=\widehat{U}|_{x_1=0}+\varepsilon W''$,
$\varphi_{\varepsilon}=\hat{\varphi}+{\varepsilon}\varphi''$, and ${\Psi}'$ and ${\Psi}''$ are associated to
$\varphi'$ and $\varphi''$ respectively like ${\Psi}$ is associated to $\varphi$. We easily compute the explicit form of $\mathbb{B}''$, that do not depend on the state $(\widehat{U},\hat{\varphi})$:
\[
\mathbb{B}''((W',\varphi'),(W'',\varphi''))=\left(
\begin{array}{c}
v'_2\partial_2\varphi''+v'_3\partial_3\varphi''+v''_2\partial_2\varphi'+v''_3\partial_3\varphi'\\
0
\end{array}
\right).
\]
To estimate the quadratic errors by utilizing representations (\ref{85}) and (\ref{86}) we need estimates for
$\mathbb{L}''$ and $\mathbb{B}''$. They can easily be obtained from the explicit forms of $\mathbb{L}''$ and $\mathbb{B}''$ by applying the Moser-type and embedding inequalities. Omitting detailed calculations, we get the following result.

\begin{proposition}
Let $T>0$ and $s\in\mathbb{N}$, with $s\geq 3$. Assume that $(\widehat{U} ,\hat{\varphi})\in
H^{s+1}(\Omega_T )\times H^{s+1}(\partial\Omega_T)$ and
\[
\|\widehat{U}\|_{H^3((\Omega_T)} +\|\hat{f} \|_{H^{3}(\partial\Omega_T)}\leq \widetilde{K}.
\]
Then there exists a positive constant $\widetilde{K}_0$, that does not depend on $s$ and $T$, and there exists a constant $C(\widetilde{K}_0) >0$ such that, if $\widetilde{K}\leq \widetilde{K}_0$ and
$(U' ,\varphi'),\, (U'' ,\varphi'')\in H^{s+1}(\Omega_T )\times H^{s+1}(\partial\Omega_T)$, then
\[
\begin{array}{r}
\|\mathbb{L}''(\widehat{U},\widehat{{\Psi}})((U',{\Psi}'),(U'',{\Psi}''))\|_{H^s(\Omega_T)}\leq C(\widetilde{K}_0)\Bigl\{ {\nl(\widehat{{U}},\hat{f})\nr}_{s+1}{\nl({U}',\varphi')\nr}_{3}{\nl({U}'',\varphi'')\nr}_{3}\qquad\\[6pt]
+{\nl({U}',\varphi')\nr}_{s+1}{\nl({U}'',\varphi'')\nr}_{3}
+ {\nl({U}'',\varphi'')\nr}_{s+1}{\nl({U}',\varphi')\nr}_{3}\Bigr\},
\end{array}
\]
where ${\nl({U} ,\varphi )\nr}_{\ell}:=\|{U} \|_{H^{\ell}(\Omega_T)} +\| \varphi\|_{H^{\ell}(\partial\Omega_T)}$. 
If $({W}' ,\varphi'),\, ({W}'' ,\varphi'')\in H^{s}(\partial\Omega_T )\times H^{s+1}(\partial\Omega_T)$, then
\[
\begin{array}{l}
\|\mathbb{B}''(({W}',\varphi'),({W}'',\varphi''))\|_{H^s(\partial\Omega_T)}\leq C(\widetilde{K}_0)\Bigl\{
\|{W}'\|_{H^s(\partial\Omega_T)}\|\varphi''\|_{H^3(\partial\Omega_T)}\\[6pt]
\qquad+\|{W}'\|_{H^3(\partial\Omega_T)}\|\varphi''\|_{H^{s+1}(\partial\Omega_T)}+\|{W}''\|_{H^s(\partial\Omega_T)}\|\varphi'\|_{H^{3}(\partial\Omega_T)}+\|{W}''\|_{H^3(\partial\Omega_T)}\|\varphi'\|_{H^{s+1}(\partial\Omega_T)}\\[6pt]
\qquad\qquad+\|{W}'\|_{H^s(\partial\Omega_T)}
\|{W}''\|_{H^3(\partial\Omega_T)}+\|{W}'\|_{H^3(\partial\Omega_T)}
\|{W}''\|_{H^s(\partial\Omega_T)}\Bigr\}.
\end{array}
\]
\label{p2}
\end{proposition}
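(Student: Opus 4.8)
\medskip
\noindent\textbf{Proof proposal.} The estimate for $\mathbb{B}''$ is elementary. Differentiating $\mathbb{B}'$ once more one obtains the explicit, state-independent bilinear expression displayed above, namely $\mathbb{B}''((W',\varphi'),(W'',\varphi''))=(v'_2\partial_2\varphi''+v'_3\partial_3\varphi''+v''_2\partial_2\varphi'+v''_3\partial_3\varphi',\,0)$, each summand of which is the product of a component of a velocity perturbation and a first tangential derivative of a front perturbation. The asserted bound then follows at once from the Moser-type product inequality \eqref{40} written on the boundary $\partial\Omega_T$, together with the embeddings $H^s(\partial\Omega_T)\hookrightarrow W^1_{\infty}(\partial\Omega_T)$ and $H^3(\partial\Omega_T)\hookrightarrow L_{\infty}(\partial\Omega_T)$ valid for $s\geq 3$; the two $\|W'\|\,\|W''\|$--type terms on the right-hand side are harmless over-estimates retained only for convenience in the later error analysis.

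For $\mathbb{L}''$ I would first compute it explicitly, differentiating $\mathbb{L}'(\widehat{U},\widehat{\Psi})(U',\Psi')$ once more in the direction $(U'',\Psi'')$ and hitting only the coefficients, which are $C^{\infty}$ functions of $\widehat{U}$, of $\partial_t\widehat{\Psi},\partial_2\widehat{\Psi},\partial_3\widehat{\Psi}$, and of the nonlinear factor $1/\partial_1\widehat{\Phi}=1/(1+\partial_1\widehat{\Psi})$, possibly multiplied by one first-order derivative of $\widehat{U}$. Since $\mathbb{L}$ is first order, $\mathbb{L}''$ is a finite sum of terms, each a product of such a coefficient with factors from $\widehat{U},\partial\widehat{U},(U',\Psi'),\partial(U',\Psi'),(U'',\Psi''),\partial(U'',\Psi'')$, linear in each of the two perturbations and with at most one derivative on any single factor. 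Recalling $\Psi'=\chi(x_1)\varphi'$ and $\Psi''=\chi(x_1)\varphi''$, every norm of $\Psi',\Psi''$ on $\Omega_T$ is controlled by the corresponding norm of $\varphi',\varphi''$ on $\partial\Omega_T$, which accounts for the mixed norms ${\nl(\cdot,\cdot)\nr}_{\ell}$ in the conclusion. I would then estimate each term in $H^s(\Omega_T)$ by the three-factor form of \eqref{40} together with \eqref{41}, using the smallness $\widetilde{K}\leq\widetilde{K}_0$ — which bounds $\|\widehat{U}\|_{L_{\infty}}$ and $\|\partial_1\widehat{\Psi}\|_{L_{\infty}}$ and hence makes the constants $C(M)=C(\widetilde{K}_0)$ uniform in $s$ — to handle the smooth coefficients and the factor $1/(1+\partial_1\widehat{\Psi})$. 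When the $H^s$-derivatives fall on the coefficient, the factor $\partial\widehat{U}$ (or $\partial\widehat{\Psi}$) must be placed in $H^s(\Omega_T)$, which requires $\widehat{U},\hat{\varphi}\in H^{s+1}$ — exactly the regularity hypothesis — and yields the group ${\nl(\widehat{U},\hat{\varphi})\nr}_{s+1}{\nl(U',\varphi')\nr}_{3}{\nl(U'',\varphi'')\nr}_{3}$; when they fall on a factor from the $(U',\varphi')$ group, respectively the $(U'',\varphi'')$ group, one gets ${\nl(U',\varphi')\nr}_{s+1}{\nl(U'',\varphi'')\nr}_{3}$, respectively ${\nl(U'',\varphi'')\nr}_{s+1}{\nl(U',\varphi')\nr}_{3}$. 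Summing the finitely many terms — the symmetry of $\mathbb{L}''$ in its two arguments halves the bookkeeping — then gives the claim.

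The single genuinely delicate point, and what I expect to be the main obstacle, is keeping the estimate \emph{tame}: when a derivative lands on one perturbation factor, the Moser splitting must be organized so that the remaining factors are measured in the low norm ${\nl(\cdot,\cdot)\nr}_{3}$ (through $H^3\hookrightarrow W^1_{\infty}$) rather than at level $H^{s+1}$, so that no term ever carries two high-order norms. This is the standard bookkeeping underlying Moser-type estimates, but it must be verified term by term — above all for the contributions coming from the $\widetilde{A}_1(\widehat{U},\widehat{\Psi})\partial_1(\,\cdot\,)$ structure, where the dependence of $\widetilde{A}_1$ on $\partial\widehat{\Psi}$ and the nonlinear factor $1/(1+\partial_1\widehat{\Psi})$ can a priori place up to three first-order derivatives into a single term. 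Once every term has been matched to one of the three groups on the right-hand side, the proof is complete.
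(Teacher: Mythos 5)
Your proposal matches the paper's approach: the paper itself says only that the estimates ``can easily be obtained from the explicit forms of $\mathbb{L}''$ and $\mathbb{B}''$ by applying the Moser-type and embedding inequalities,'' omitting all calculations, and your sketch — compute $\mathbb{L}''$ and $\mathbb{B}''$ explicitly, use \eqref{40}--\eqref{41} together with $H^3\hookrightarrow W^1_\infty$ and the smallness $\widetilde K\leq\widetilde K_0$ to keep the Moser splitting tame (one high-order norm per term), and note that $\Psi',\Psi''$ are controlled by $\varphi',\varphi''$ via the cut-off — is exactly the argument the paper has in mind. Your remark that the two $\|W'\|\,\|W''\|$ terms in the $\mathbb{B}''$ bound are harmless over-estimates is correct (the explicit form of $\mathbb{B}''$ contains no $W'W''$ products), and your identification of the $\widetilde A_1$-dependence through $1/(1+\partial_1\widehat\Psi)$ as the main bookkeeping burden is an accurate reading of where the work lies.
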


Without loss of generality we assume that the constant $\widetilde{K}_0=2C_*$, where $C_*$ is the constant from
(\ref{66}). By using (\ref{85}), (\ref{86}), and Proposition \ref{p2}, we obtain the following result.

\begin{lemma}
Let $\alpha \geq 4$. There exist $\delta >0$ sufficiently small, and $\theta_0 \geq 1$ sufficiently large, such that
for all $k=0,\ldots n-1$, and for all integer $s\in [3,\widetilde{\alpha}-1]$, we have the estimates
\begin{align}
\|{e}'_k\|_{H^s(\Omega_T)}\leq C\delta^2\theta_k^{L_1(s)-1}\Delta_k,\label{87}\\
\|\tilde{e}'_k\|_{H^s(\partial\Omega_T)}\leq C\delta^2\theta_k^{L_1(s)-1}\Delta_k,\label{88}
\end{align}
where $L_1(s)=\max \{ (s+1-\alpha )_+ +4-2\alpha ,s+2-2\alpha \}$.
\label{l4}
\end{lemma}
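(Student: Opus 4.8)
The plan is to read the quadratic errors off their integral and bilinear representations \eqref{85}--\eqref{86}, estimate the second variations $\mathbb{L}''$, $\mathbb{B}''$ by Proposition \ref{p2}, and feed in the bounds of the inductive hypothesis $(H_{n-1})$ and of Lemma \ref{l3}, using also the elementary two-sided bound $\tfrac13\theta_k^{-1}\le\Delta_k\le\tfrac12\theta_k^{-1}$. First I would fix the ``intermediate state'' occurring in \eqref{85}, namely $(U^a+U_k+\tau\delta U_k,\,\varphi^a+\varphi_k+\tau\delta\varphi_k)$ for $\tau\in[0,1]$. By \eqref{66}, by estimates \eqref{80}--\eqref{81} with $s=3<\alpha$, and by $(H_{n-1})$\,a) with $s=3$, its $H^3(\Omega_T)\times H^3(\partial\Omega_T)$-norm is at most $C_*+C\delta\le 2C_*=\widetilde{K}_0$ once $\delta$ is small and $\theta_0$ large, so Proposition \ref{p2} applies with a constant uniform in $\tau$ and in $k$. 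In the same way, using \eqref{66} for $U^a$ and \eqref{80}--\eqref{81} for $U_k$, $\delta U_k$, one gets $\{\nl U^a+U_k+\tau\delta U_k,\varphi^a+\varphi_k+\tau\delta\varphi_k\nr\}_{s+1}\le C\theta_k^{(s+1-\alpha)_+}$ for $s\le\widetilde{\alpha}-1$; in the borderline case $s+1=\alpha$ the factor $\log\theta_k$ coming from \eqref{81} is absorbed by $\theta_k^{\,\alpha-3}$, which is where $\alpha\ge4$ is used.

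With these preparations I would apply the first bound of Proposition \ref{p2} to \eqref{85} with both perturbation slots equal to $(\delta U_k,\delta\Psi_k)$ and integrate in $\tau$. Inserting $(H_{n-1})$\,a) in the two forms $\{\nl\delta U_k,\delta\varphi_k\nr\}_{3}\le\delta\theta_k^{2-\alpha}\Delta_k$ and $\{\nl\delta U_k,\delta\varphi_k\nr\}_{s+1}\le\delta\theta_k^{s-\alpha}\Delta_k$ (the latter needing $s+1\le\widetilde{\alpha}$), the three terms of Proposition \ref{p2} contribute, respectively, $\theta_k^{(s+1-\alpha)_++2(2-\alpha)}\Delta_k^2$ (the coefficient term) and $\theta_k^{(s-\alpha)+(2-\alpha)}\Delta_k^2$ (the other two); replacing one factor $\Delta_k$ by $\tfrac12\theta_k^{-1}$ turns the two exponents into $(s+1-\alpha)_++4-2\alpha-1$ and $s+2-2\alpha-1$, which are exactly the two arguments of the maximum defining $L_1(s)$, lowered by $1$, so $\theta_k\ge1$ gives \eqref{87}. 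For \eqref{88} I would use the explicit, state-independent, first-order bilinear form of $\mathbb{B}''$: every summand is $v'\,\partial_j\varphi''$ with $v'$ a component of a velocity perturbation restricted to $x_1=0$. Such a product on the $3$-dimensional boundary is estimated by a Moser inequality, bounding the $L_\infty(\partial\Omega_T)$-norm of the velocity trace by $\|\delta U_k\|_{H^3(\Omega_T)}\le C\delta\theta_k^{2-\alpha}\Delta_k$ (restriction plus the embedding $H^3(\Omega_T)\hookrightarrow L_\infty(\Omega_T)$) and the high-order factor by $\|\partial_j\delta\varphi_k\|_{H^s(\partial\Omega_T)}\le\|\delta\varphi_k\|_{H^{s+1}(\partial\Omega_T)}\le\delta\theta_k^{s-\alpha}\Delta_k$; the same replacement $\Delta_k^2\le\tfrac12\theta_k^{-1}\Delta_k$ produces the exponent $s+2-2\alpha-1\le L_1(s)-1$, hence \eqref{88}.

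The argument is essentially pure bookkeeping of powers of $\theta_k$, so the two places where genuine care is needed are: (i) checking once and for all that the smallness hypothesis $\widetilde{K}\le\widetilde{K}_0$ of Proposition \ref{p2} holds for the whole family of intermediate states, uniformly in $\tau$ and $k$ (this is what the estimates \eqref{80}--\eqref{81} of Lemma \ref{l3} are for); and (ii) in the boundary estimate, ensuring that the velocity perturbations enter only through their $H^3(\Omega_T)$-norm, since the crude trace inequality $\|\,\cdot\,|_{x_1=0}\|_{H^3(\partial\Omega_T)}\le C\|\,\cdot\,\|_{H^4(\Omega_T)}$ would yield an exponent strictly larger than $L_1(s)-1$ and the estimate would break. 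Once these points are secured, \eqref{87}--\eqref{88} follow for every integer $s\in[3,\widetilde{\alpha}-1]$.
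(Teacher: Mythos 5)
Your proposal is correct and takes essentially the same route as the paper: the representations \eqref{85}--\eqref{86}, Proposition \ref{p2} applied after checking the $H^3$ bound $\leq 2C_*$ on the intermediate state via \eqref{66} and Lemma \ref{l3}, then the inductive hypothesis at levels $3$ and $s+1$, the inequality $\theta_k\Delta_k\leq 1/2$, and a separate treatment of the borderline case $s+1=\alpha$ in which $\log\theta_k$ is absorbed by $\theta_k^{\alpha-3}$ (this is where $\alpha\geq 4$ enters), exactly as in the paper. Your boundary estimate merely spells out, through the explicit bilinear form of $\mathbb{B}''$, what the paper compresses into ``analogously, by Proposition \ref{p2} and the trace theorem'', and your observation that the low-order velocity factor must be taken in $L_\infty$ via $\|\delta U_k\|_{H^3(\Omega_T)}$ rather than through a trace at the level $H^3(\partial\Omega_T)$ is precisely the right precaution (the symmetric Moser term, with the velocity trace in $H^s(\partial\Omega_T)$ and $\partial_j\delta\varphi_k$ in $L_\infty$, is handled the same way and gives the same exponent).
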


\begin{proof}
In view of (\ref{66}) (recall that $m =\tilde{\alpha} -4$), ($H_{n-1}$), and (\ref{80}), we estimate the ``coefficient'' of $\mathbb{L}''$ in (\ref{85}) as  follows:
\[
\sup_{\tau\in [0,1]}{\nl ({U}^a +{U}_k +\tau\delta{U}_k ,\varphi^a+\varphi_k+\tau\delta \varphi_k)\nr}_3\leq
C_* +\delta\theta_k^{(3-\alpha )_+} + \delta\theta_k^{2-\alpha }\Delta_k\leq C_* +C\delta \leq 2C_*
\]
for $\delta$ sufficiently small. Therefore, we may apply Proposition \ref{p2}:
\[
\|{e}'_k\|_{H^s(\Omega_T)}\leq C\Bigl( \delta^2\theta_k^{4 -2\alpha}\Delta_k^2\bigl(C_*+ {\nl({U}_k ,\varphi_k)\nr}_{s+1}+{\nl(\delta{U}_k ,\delta \varphi_k)\nr}_{s+1}\bigr)
+\delta^2\theta_k^{s+2-2\alpha}\Delta_k^2\Bigr)
\]
for $s\in [3,\widetilde{\alpha}-1]$. If $s+1 \neq \alpha $, it follows from (\ref{80}) that
\[
\|{e}'_k\|_{H^s(\Omega_T)}\leq C\delta^2\Delta_k^2\left\{ \theta_k^{(s+2-\alpha )_+ +12 -2\alpha} +
\theta_k^{s+7-2\alpha}\right\} \leq C\delta^2\theta_k^{L_1(s)-1}\Delta_k
\]
(here we have used the inequality $\theta_k\Delta_k\leq 1/2$). If $s+1=\alpha$ and $\alpha\geq 4$,
\[
\|{e}'_k\|_{H^s(\Omega_T)}\leq C\delta^2\Delta_k^2\left\{(C_* +\delta\log\theta_k +\delta\theta_k^{-1}\Delta_k)\theta_k^{4-2\alpha}  + \theta_k^{1-\alpha}\right\}
\leq C\delta^2\Delta_k^2\theta_k^{1-\alpha}\leq C\delta^2\theta_k^{L_1(\alpha -1 )-1}\Delta_k.
\]
Analogously, by using (\ref{86}), Proposition \ref{p2}, and the trace theorem, we get \eqref{88}.
\end{proof}

\paragraph{Estimate of the first substitution errors.} The first substitution errors can be rewritten as follows:
\begin{equation}
\begin{array}{rl}
{e}''_k &= {\cal L}'({U}_{k} ,{\Psi}_{k})(\delta{{U}}_k,\delta{\Psi}_{k})-
{\cal L}'(S_{\theta_k}{U}_{k} ,S_{\theta_k}{\Psi}_{k})(\delta{{U}}_k,\delta{\Psi}_{k})
\\[3pt]
 & {\displaystyle{=\int_0^1\mathbb{L}''\bigl({U}^a+S_{\theta_k}{U}_k +\tau (I-S_{\theta_k}){U}_k, {\Psi}^a+S_{\theta_k}{\Psi}_k} }
 \\[9pt]
& \quad + \tau (I -  S_{\theta_k}){\Psi}_k\bigr)\bigl((\delta{U}_k,\delta{\Psi}_k),
((I-S_{\theta_k}) {U}_k,(I-S_{\theta_k}) {\Psi}_k)\bigr) d\tau ,
\end{array}
\label{89}
\end{equation}
\begin{equation}
\begin{array}{rl}
\tilde{e}''_k &=
\bigl({\cal B}'({U}_{k},\varphi_{k})(\delta{{U}}_k,\delta \varphi_{k})-{\cal B}'(S_{\theta_k}{U}_{k},S_{\theta_k}\varphi_{k})(\delta{{U}}_k,\delta \varphi_{k})\bigr)|_{x_1=0}\\[6pt]
& =\mathbb{B}''\bigl(
(\delta{U}_k|_{x_1=0},\delta \varphi_k),(({U}_k-S_{\theta_k}{U}_k)|_{x_1=0},\varphi_k- S_{\theta_k}\varphi_k)\bigr).
\end{array}\label{90}
\end{equation}

\begin{lemma}
Let $\alpha \geq 4$. There exist $\delta >0$ sufficiently small, and $\theta_0 \geq 1$ sufficiently large, such that
for all $k=0,\ldots n-1$, and for all integer $s\in [6,\widetilde{\alpha}-2]$, one has
\begin{align}
\|{e}''_k\|_{H^s(\Omega_T)}\leq C\delta^2\theta_k^{L_2(s)-1}\Delta_k,\label{91}\\
\|\tilde{e}''_k\|_{H^s(\partial\Omega_T)}\leq C\delta^2\theta_k^{L_2(s)-1}\Delta_k,\label{92}
\end{align}
where $L_2(s)=\max \{ (s+1-\alpha )_+ +6-2\alpha ,s+5-2\alpha \}$.
\label{l5}
\end{lemma}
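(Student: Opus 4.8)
The plan is to follow the scheme already used for the quadratic errors in Lemma \ref{l4}, now starting from the integral/bilinear representations (\ref{89}) and (\ref{90}) of the first substitution errors and combining Proposition \ref{p2} with the consequences (\ref{80})--(\ref{84}) of the inductive hypothesis $(H_{n-1})$. Concretely, I would first check that Proposition \ref{p2} is applicable to (\ref{89}): for $s$ in the stated range and $\tau\in[0,1]$ the ``coefficient'' state $U^a+S_{\theta_k}U_k+\tau(I-S_{\theta_k})U_k$ (and the associated $\varphi$-state) is bounded in $H^3$, since writing $S_{\theta_k}U_k+\tau(I-S_{\theta_k})U_k=\tau U_k+(1-\tau)S_{\theta_k}U_k$ and using (\ref{66}), (\ref{80}) and (\ref{83}) at order $3$ (recall $\alpha=m+1\geq4$) its $H^3$-norm is $\leq C_*+C\delta\leq 2C_*=\widetilde K_0$ for $\delta$ small.

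Having this, I would apply Proposition \ref{p2} to (\ref{89}) with $(U',\varphi')=(\delta U_k,\delta\varphi_k)$, $(U'',\varphi'')=((I-S_{\theta_k})U_k,(I-S_{\theta_k})\varphi_k)$, and the above state as $\widehat U$, and substitute the relevant norms into the three resulting terms. For the slot carrying $\delta U_k$ I use $(H_{n-1})$\,a), i.e. $\nl(\delta U_k,\delta\varphi_k)\nr_{3}\leq\delta\theta_k^{2-\alpha}\Delta_k$ and $\nl(\delta U_k,\delta\varphi_k)\nr_{s+1}\leq\delta\theta_k^{s-\alpha}\Delta_k$ (legitimate since $s+1\leq\tilde{\alpha}$); for the slot carrying $(I-S_{\theta_k})U_k$ I use (\ref{82}), i.e. $\nl((I-S_{\theta_k})U_k,(I-S_{\theta_k})\varphi_k)\nr_{\ell}\leq C\delta\theta_k^{\ell-\alpha}$ for $\ell\in\{3,s+1\}$; and for the $H^{s+1}$-norm of the coefficient state I use (\ref{80})--(\ref{84}), which gives $\leq C_*+C\delta\theta_k^{(s+1-\alpha)_+}$ when $s+1\neq\alpha$ and $\leq C_*+C\delta\log\theta_k$ when $s+1=\alpha$. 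Multiplying out and using $\theta_k\Delta_k\leq1/2$ (and $\delta^3\leq\delta^2$), each of the three contributions is $\leq C\delta^2\theta_k^{L_2(s)-1}\Delta_k$, the exponent $L_2(s)$ being exactly what is needed to dominate the powers of $\theta_k$ produced (namely $5-2\alpha$, $(s+1-\alpha)_++5-2\alpha$ and $s+3-2\alpha$); this proves (\ref{91}). For (\ref{92}) I would argue in the same way from (\ref{90}) and the explicit form of $\mathbb{B}''$ (which involves only $\dot v_2,\dot v_3$ and $\partial_2\varphi,\partial_3\varphi$), using the trace theorem to reduce to boundary norms and the boundary version of (\ref{82}).

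The genuinely delicate point, exactly as in the proof of Lemma \ref{l4}, is the borderline case $s+1=\alpha$, which does occur here since $\alpha=m+1$ while $s$ ranges up to $\tilde{\alpha}-2=m+2$: there the $H^\alpha$-norm of the coefficient state carries a $\log\theta_k$ factor in place of a power of $\theta_k$, and one must verify that this logarithm is absorbed by the strict gap between the exponent $5-2\alpha$ it multiplies and $L_2(\alpha-1)-1=3-\alpha$ (strict since $\alpha\geq 4$). Apart from this and the routine bookkeeping of which term produces which power of $\theta_k$, the argument is a direct application of the Moser-type and embedding inequalities already invoked above, together with the trace theorem, so I would omit the detailed computations just as the paper does elsewhere.
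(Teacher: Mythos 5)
Your proposal is correct and follows essentially the same route as the paper: applicability of Proposition \ref{p2} to the representations (\ref{89})--(\ref{90}) via the $2C_*$ bound on the intermediate state, then $(H_{n-1})$ and (\ref{82})--(\ref{84}) to collect the powers $5-2\alpha$, $(s+1-\alpha)_++5-2\alpha$, $s+3-2\alpha$, with the $\log\theta_k$ absorption in the borderline case $s+1=\alpha$ and the trace theorem for $\tilde{e}''_k$. No gaps.
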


\begin{proof}
It follows from (\ref{66}), ($H_{n-1}$), (\ref{82}), and (\ref{83}) that
\[
\sup_{\tau\in [0,1]} {\nl (U^a+S_{\theta_k}U_k +\tau (I-S_{\theta_k})U_k ,\varphi^a+S_{\theta_k}\varphi_k+\tau (I-S_{\theta_k}) \varphi_k)\nr}_3\leq 2C_*
\]
for $\delta$ sufficiently small, i.e., we may apply Proposition \ref{p2} for estimating $\mathbb{L}''$ in (\ref{89}). Using again (\ref{66}), ($H_{n-1}$), (\ref{82}), and (\ref{83}), for $s+1\neq {\alpha}$ and $s+1\leq \tilde{\alpha}$ 
we get
\[
\|{e}''_k\|_{H^s(\Omega_T)}\leq C\Bigl\{ \delta^2\theta_k^{5 -2\alpha}\Delta_k\bigl(C_*+ \delta\theta_k^{(s+1-\alpha )_+}+\delta\theta_k^{s+1-\alpha }\bigr)+\delta^2\theta_k^{s+3-2\alpha}\Delta_k\Bigr\}
\leq  C\delta^2\theta_k^{L_2(s)-1}\Delta_k.
\]
Similarly, but exploiting (\ref{84}) instead of (\ref{83}), for the case $s+1= {\alpha}$ we obtain
\[
\begin{array}{c}
\|{e}''_k\|_{H^s(\Omega_T)}\leq C\Bigl\{ \delta^2\theta_k^{5 -2\alpha}\Delta_k(C_*+ \delta\log\theta_k+\delta )+\delta^2\theta_k^{2 -\alpha}\Delta_k\Bigr\}
\\[6pt]
\qquad\qquad 
\leq C\delta^2\Delta_k\left\{ \theta_k^{6-2\alpha }+\theta_k^{2-\alpha}\right\}\leq C\delta^2\theta_k^{L_2(\alpha -1 )-1}\Delta_k
\end{array}
\]
for $\alpha \geq 4$.

By virtue of (\ref{90}), the trace theorem, and Proposition \ref{p2}, we have
\[
\begin{array}{l}
\|\tilde{e}''_k\|_{H^s(\Omega_T)}\leq C\Bigl\{ [\delta{U}_k ]_{s+1,*,T}\|(1-S_{\theta_k}) \varphi_k\|_{H^3(\partial\Omega_T)}+
\|\delta{U}_k \|_{H^3(\Omega_T)}\| (1-S_{\theta_k}) \varphi_k\|_{H^{s+1}(\partial\Omega_T)}\\[6pt]
\quad 
+\|(I-S_{\theta_k}){U}_k ]_{H^{s+1}(\Omega_T)}\| \delta \varphi_k\|_{H^{3}(\partial\Omega_T)}
+\|(I-S_{\theta_k}){U}_k \|_{H^{3}(\Omega_T)}\| \delta \varphi_k\|_{H^{s+1}(\partial\Omega_T)}\\[6pt]
\qquad 
+\|\delta{U}_k \|_{H^{s+1}(\Omega_T)}\|(I-S_{\theta_k}){U}_k \|_{H^{3}(\Omega_T)}+\|\delta{U}_k \|_{H^{3}(\Omega_T)}\|(I-S_{\theta_k}){U}_k \|_{H^{s+1}(\Omega_T)}
\Bigr\}.
\end{array}
\]
Then, ($H_{n-1}$) and (\ref{82}) imply (\ref{92}). 
\end{proof}

\paragraph{Construction and estimate of the modified state.} Since the approximate solution satisfies the strict inequalities (\ref{6}) (for all $x\in\overline{\Omega_T}$) and (\ref{16'})  (see Lemma \ref{l2}) and since we shall require that the smooth modified state vanishes in the past, the state $(U^a+U_{n+1/2},\varphi^a+\varphi_{n+1/2})$ will satisfy (\ref{6}) and (\ref{16'}) for a sufficiently short time $T>0$. Therefore, while constructing the modified state we may focus only on constraint (\ref{20}), i.e., the first boundary condition in (\ref{14}).

\begin{proposition}
Let $\alpha \geq 4$. The exist some functions $U_{n+1/2}$ and $\varphi_{n+1/2}$, that vanish in the past, and such that
$(U^a+U_{n+1/2},\varphi^a+\varphi_{n+1/2})$ satisfies (\ref{20}), and inequalities (\ref{6}) and (\ref{16'}) for a sufficiently short time $T$. Moreover, these functions satisfy
\begin{equation}
\varphi_{n+1/2}=S_{\theta_n}\varphi_n, \quad
p_{n+1/2}=S_{\theta_n}p_n,\quad v_{j,n+1/2}=S_{\theta_n}v_{j,n}\quad  (j=2,3),\quad
S_{n+1/2}=S_{\theta_n}S_n,  \label{93}
\end{equation}
and
\begin{equation}
\|U_{n+1/2} - S_{\theta_n}U_n\|_{H^s(\Omega_T)}\leq C\delta\theta_n^{s+1-\alpha}\quad \mbox{for}\ s\in [3,\tilde{\alpha}+3].
\label{94}
\end{equation}
for sufficiently small $\delta>0$ and $T>0$, and a sufficiently large $\theta_0\geq 1$.
\label{p3}
\end{proposition}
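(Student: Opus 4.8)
The plan is to fix every component of the modified state except $v_{1,n+1/2}$ by the prescriptions \eqref{93}; since $\{S_{\theta_n}\}$ preserves the class of functions vanishing in the past, these choices do so too, and for all of them the left-hand side of \eqref{94} is identically zero, so that estimate will only have to be checked for the single component $v_{1,n+1/2}-S_{\theta_n}v_{1,n}$. The component $v_{1,n+1/2}$ is the ``noncharacteristic'' velocity component that one is free to adjust on the boundary, and it is to be determined so that the only genuinely nonlinear constraint, namely \eqref{20} written for the state $(\widehat{U},\hat{\varphi})=(U^a+U_{n+1/2},\varphi^a+\varphi_{n+1/2})$, holds on $\{x_1=0\}$. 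The remaining constraints \eqref{6} and \eqref{16'} will be recovered at the end from smallness.

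Since by Lemma \ref{l2} the approximate solution $(U^a,\varphi^a)$ satisfies the boundary conditions \eqref{14} exactly, one has $v_1^a|_{x_1=0}=\partial_t\varphi^a+v_2^a\partial_2\varphi^a+v_3^a\partial_3\varphi^a$; subtracting this from the first component of \eqref{20} for $(\widehat{U},\hat{\varphi})$ gives an explicit expression for the trace $v_{1,n+1/2}|_{x_1=0}$ in terms of $\varphi^a,v_2^a,v_3^a$ and of $\varphi_{n+1/2}=S_{\theta_n}\varphi_n$, $v_{2,n+1/2}=S_{\theta_n}v_{2,n}$, $v_{3,n+1/2}=S_{\theta_n}v_{3,n}$. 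One then sets
\[
v_{1,n+1/2}:=S_{\theta_n}v_{1,n}+\mathcal{R}\bigl(v_{1,n+1/2}|_{x_1=0}-(S_{\theta_n}v_{1,n})|_{x_1=0}\bigr),
\]
where $\mathcal{R}$ denotes the elementary lifting $(\mathcal{R}\psi)(t,x_1,x')=\chi(x_1)\psi(t,x')$, which is bounded $H^s(\partial\Omega_T)\to H^s(\Omega_T)$ for every $s\geq 0$ and maps functions vanishing in the past to functions vanishing in the past. With this definition $v_{1,n+1/2}|_{x_1=0}$ equals the prescribed value, so \eqref{20} holds for $(\widehat{U},\hat{\varphi})$.

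The core of the proof is \eqref{94}, which via the boundedness of $\mathcal{R}$ reduces to estimating $\psi:=v_{1,n+1/2}|_{x_1=0}-(S_{\theta_n}v_{1,n})|_{x_1=0}$ in $H^s(\partial\Omega_T)$ for $s\in[3,\tilde{\alpha}+3]$. Combining the expression above with the analogous identity for $v_{1,n}|_{x_1=0}$ — which by the definition of $\mathcal{B}$ reads $v_{1,n}|_{x_1=0}=\partial_t\varphi_n+v_2^a\partial_2\varphi_n+v_{2,n}\partial_2(\varphi^a+\varphi_n)+v_3^a\partial_3\varphi_n+v_{3,n}\partial_3(\varphi^a+\varphi_n)-\mathcal{B}_1(U_n|_{x_1=0},\varphi_n)$, with $\mathcal{B}_1$ the first component of $\mathcal{B}$ — one sees that $\psi$ is a finite sum of: (i) the term $S_{\theta_n}\mathcal{B}_1(U_n|_{x_1=0},\varphi_n)$, and (ii) substitution/commutator terms of the form $(a-S_{\theta_n}a)\,S_{\theta_n}\partial_jb$ and $[a,S_{\theta_n}]\partial_jb$, with $a$ a component of $(U^a,\varphi^a)$ or of $(U_n,\varphi_n)$ and $b$ a component of $(U_n,\varphi_n)$; here one uses that the smoothing family can be chosen to commute, up to harmless lower-order terms, with the tangential derivatives $\partial_t,\partial_2,\partial_3$ and with restriction to $\{x_1=0\}$. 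Term (i) is bounded by $C\theta_n^{(s-\alpha)_+}\|\mathcal{B}(U_n|_{x_1=0},\varphi_n)\|_{H^{\min(s,\alpha)}(\partial\Omega_T)}\leq C\delta\theta_n^{s+1-\alpha}$ via \eqref{72} and part c) of $(H_{n-1})$, and the terms (ii) are each bounded by $C\delta\theta_n^{s+1-\alpha}$ by the standard commutator and substitution estimates for smoothing operators (as in \cite{CS,Tr}) together with \eqref{72}, \eqref{73}, the Moser-type inequalities \eqref{40}, \eqref{41}, the trace theorem, the bound \eqref{66} for $(U^a,\varphi^a)$, and the bounds \eqref{80}--\eqref{84} of Lemma \ref{l3} for $(U_n,\varphi_n)$ and $S_{\theta_n}(U_n,\varphi_n)$ — note that the range $s\leq\tilde{\alpha}+3$ is exactly what allows Lemma \ref{l3} and \eqref{66} to control one more tangential derivative than $s$ of the smoothed quantities occurring in $\psi$. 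Summing, $\|\psi\|_{H^s(\partial\Omega_T)}\leq C\delta\theta_n^{s+1-\alpha}$, which gives \eqref{94}. Finally, \eqref{93}, \eqref{94} and Lemma \ref{l3} yield $\|U_{n+1/2}\|_{H^3(\Omega_T)}+\|\varphi_{n+1/2}\|_{H^3(\partial\Omega_T)}\leq C\delta$ (using $\alpha=m+1>4$), so by Sobolev's embedding the perturbation of $(U^a,\varphi^a)$ is small in $L^\infty$; since $(U^a,\varphi^a)$ satisfies \eqref{6} and \eqref{16'} strictly, with a uniform margin inherited for short $T$ from the initial data, the state $(U^a+U_{n+1/2},\varphi^a+\varphi_{n+1/2})$ satisfies \eqref{6} and \eqref{16'} as well.

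\emph{The main obstacle} is the bookkeeping behind \eqref{94}: one must split $\psi$ into commutator and substitution pieces whose dependence on $\theta_n$ reproduces precisely the exponent $s+1-\alpha$, which forces simultaneous use of part c) of $(H_{n-1})$ for the boundary residual and a tight match between the regularity gained from Lemma \ref{l3} and Proposition \ref{p1} and the loss incurred in each term — this is exactly why the thresholds $\mu=m+7$ and $\tilde{\alpha}=m+4$ were fixed as they were. A secondary, purely technical point is that the smoothing family must commute, up to lower-order errors, with restriction to $\{x_1=0\}$ and with tangential differentiation, a standard property of the families of \cite{Al,CS,Herm} that one either invokes or arranges in the construction.
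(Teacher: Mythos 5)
Your construction of the modified state is the same as the paper's: all components except $v_{1,n+1/2}$ are fixed by \eqref{93}, the first velocity component is corrected by lifting a boundary quantity chosen so that \eqref{20} holds, and the estimate \eqref{94} is reduced to bounding the boundary correction $\psi=v_{1,n+1/2}|_{x_1=0}-(S_{\theta_n}v_{1,n})|_{x_1=0}$, which you split into $S_{\theta_n}{\cal B}_1(U_n|_{x_1=0},\varphi_n)$ plus commutator/substitution pieces $[\partial_t,S_{\theta_n}]\varphi_n$, $(a)S_{\theta_n}(\partial_j b)-S_{\theta_n}(a\partial_j b)$, etc. This is exactly the paper's decomposition of $\mathcal G$. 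The small difference in the choice of lifting (your $\chi(x_1)\psi$ rather than a genuine one-derivative-gaining ${\cal R}_T$) is immaterial, since only $H^s(\partial\Omega_T)\to H^s(\Omega_T)$ boundedness is used.

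There is, however, a genuine gap in your estimate of term (i). You bound $\|S_{\theta_n}{\cal B}_1(U_n|_{x_1=0},\varphi_n)\|_{H^s(\partial\Omega_T)}$ by invoking (72) together with \emph{point c) of $(H_{n-1})$}, but $(H_{n-1})$ only controls $\|{\cal B}(U_k|_{x_1=0},\varphi_k)\|_{H^{s'}(\partial\Omega_T)}$ for $k\leq n-1$, not for $k=n$: the statement for $k=n$ is precisely what Lemma \ref{l12} proves \emph{after} Proposition \ref{p3} and Lemma \ref{l11}, so using it here is circular. The paper avoids this by inserting the further first-difference identity
\[
{\cal B}_1(U_n|_{x_1=0},\varphi_n)={\cal B}_1(U_{n-1}|_{x_1=0},\varphi_{n-1})+\partial_t(\delta\varphi_{n-1})
+\sum_{j=2}^3\bigl((v_j^a+v_{j,n-1})\partial_j(\delta\varphi_{n-1})+\delta v_{j,n-1}\partial_j(\varphi^a+\varphi_n)-\delta v_{1,n-1}\bigr)\big|_{x_1=0},
\]
and then bounds $S_{\theta_n}{\cal B}_1(U_{n-1}|_{x_1=0},\varphi_{n-1})$ using point c) of $(H_{n-1})$ (applied to $k=n-1$, and taking the $H^{s+1}$ norm when $s<\alpha$ so as to stay inside the range $[4,\alpha]$), while the remaining terms involving $\delta\varphi_{n-1}$, $\delta v_{j,n-1}$ are estimated through point a) of $(H_{n-1})$, (72)--(73), Lemma \ref{l3} and (66). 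Adding this one-step-back decomposition fixes the argument; without it, the bound on term (i) is not justified.
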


\begin{proof}
Actually, estimate \eqref{94} which we are going to prove hold for every $s\geq 3$ but below we will need it only for $s\in [3,\tilde{\alpha}+3]$. Let $\varphi_{n+1/2}$, the pressure $p_{n+1/2}$, the entropy $S_{n+1/2}$, and the tangential components of the velocity $v_{n+1/2}$ are defined by (\ref{93}). We define $v_{1,n+1/2}$ as in \cite{Tr}:
\[
v_{1,n+1/2}:= S_{\theta_n}v_{1,n} +{\cal R}_T{\cal G},
\]
where
\[
{\cal G}= \partial_t \varphi_{n+1/2} -(S_{\theta_n}v_{1,n})|_{x_1=0}
+\sum_{j=2}^3\bigl(
(v_j^{a}+v_{j,n+1/2} )\partial_j\varphi_{n+1/2} + v_{j,n+1/2}\partial_j\varphi^a\bigr)\bigr|_{x_1=0},
\]
and ${\cal R}_T:\; H^s(\partial\Omega_T)\longrightarrow H^{s+1}(\Omega_T)$ is the lifting operator from the boundary to the interior. To get the estimate of $v_{1,n+1/2} - S_{\theta_n}v_{1,n}$ we use the following decompositions:
\[
\begin{array}{r}
{\cal G} =S_{\theta_n}{\cal B}_{1}({U}_n|_{x_1=0},\varphi_n) - \partial_t(1-S_{\theta_n})\varphi_n + (1-S_{\theta_n})\partial_t\varphi_n {\displaystyle +
\sum_{j=2}^3\bigl( (v_j^{a}+S_{\theta_n}v_{j,n} )\partial_jS_{\theta_n}\varphi_{n} }
\\[3pt]
-S_{\theta_n}((v_j^{a}+v_{j,n})
\partial_j\varphi_n)
+ (S_{\theta_n}v_{j,n})\partial_j\varphi^a-S_{\theta_n}(v_{j,n}\partial_j\varphi^a)\bigr)\bigr|_{x_1=0}
\end{array}
\]
and
\[
\begin{array}{l}
{\cal B}_{1}({U}_n|_{x_1=0},\varphi_n)=\mathbb{B}_{v}({U}_{n-1}|_{x_1=0},\varphi_{n-1})+\partial_t(\delta \varphi_{n-1})
\\[3pt]
\qquad{\displaystyle
+\sum_{j=2}^3\bigl( (v_j^{a}+v_{j,n-1} )\partial_j(\delta \varphi_{n-1}) +\delta v_{j,n-1}
\partial_j(\varphi^a+\varphi_n) -\delta v_{1,n-1}\bigr)\bigr|_{x_1=0}\,,}
\end{array}
\]
where ${\cal B}_{1}$ denotes the first row of the boundary operator ${\cal B}$ in (\ref{70}).

Exploiting point $c)$ of ($H_{n-1}$), one has
\[
\|{\cal R}_T(S_{\theta_n}{\cal B}_1({U}_{n-1}|_{x_1=0},\varphi_{n-1}))\|_{H^s(\Omega_T)}\leq C\| S_{\theta_n}{\cal B}_1({U}_{n-1}|_{x_1=0},\varphi_{n-1})\|_{H^s(\partial\Omega_T)}
\]
\[
\leq \left\{ \begin{array}{ll} C\theta_n^{s-\alpha }\|{\cal B}_1({U}_{n-1}|_{x_1=0},\varphi_{n-1})\|_{H^{\alpha}(\partial\Omega_T)}&\quad \mbox{for}\ s\in [\alpha,\tilde{\alpha}+3],\\[3pt]
C\|{\cal B}_1({U}_{n-1}|_{x_1=0},\varphi_{n-1})\|_{H^{s+1}(\partial\Omega_T)}&\quad \mbox{for}\ s\in [3,\alpha -1]
\end{array}\right.
\leq C\delta\theta_n^{s-\alpha}\quad \mbox{for}\ s\in [3,\tilde{\alpha}+3].
\]
Using (\ref{72}) and point $a)$ of ($H_{n-1}$), we get
\[
\begin{array}{l}
\|{\cal R}_T(S_{\theta_n}\partial_t(\delta \varphi_{n-1}))\|_{H^s(\Omega_T)}\leq C\| S_{\theta_n}\partial_t(\delta \varphi_{n-1})\|_{H^s(\partial\Omega_T)}
\\[6pt]
\qquad\leq C\theta_n^{s-2}\|\delta \varphi_{n-1}\|_{H^3(\partial\Omega_T)}\leq C\theta_n^{s-2}\delta\theta_{n-1}^{2-\alpha }
\theta_{n-1}^{-1}\leq C\delta\theta_n^{s-\alpha -1}
\end{array}
\]
for $s\in [3,\tilde{\alpha} +3]$. We also obtain
\[
\begin{array}{l}
\|{\cal R}_T\bigl(S_{\theta_n}((v_j^{a}+v_{j,n-1} )|_{x_1=0}\,\partial_j(\delta \varphi_{n-1})) 
\bigr)\|_{H^s(\Omega_T)}\leq C\theta_n^{s-3}\|(v_j^{a}+v_{j,n-1} )|_{x_1=0}\,\partial_j(\delta \varphi_{n-1})\|_{H^3(\partial\Omega_T)}
\\[6pt]
\qquad\leq C\theta_n^{s-3}\Bigl\{\|\delta \varphi_{n-1}\|_{H^4(\partial\Omega_T)}\|{U}^a +{U}_{n-1}
\|_{H^3(\Omega_T)}
\\[6pt]
\qquad\quad +
\|\delta \varphi_{n-1}\|_{H^3(\partial\Omega_T)}\|{U}^a +{U}_{n-1}\|_{H^7(\Omega_T)}\Bigr\}\leq C\theta_n^{s-3}\delta\theta_n^{2-\alpha}C_*
\leq C\delta\theta_n^{s-\alpha -1}
\end{array}
\]
for $j=2,3$ and $s\in [3,\tilde{\alpha} +3]$. Estimating similarly the remaining terms containing
in ${\cal R}_T(S_{\theta_n}{\cal B}_1({U}_{n}|_{x_1=0},\varphi_{n}))$, we finally obtain 
\[
\|{\cal R}_T(S_{\theta_n}{\cal B}_1({U}_{n}|_{x_1=0},\varphi_{n}))\|_{H^s(\Omega_T)}\leq C\delta\theta_n^{s-\alpha },\quad
s\in [3,\tilde{\alpha} +3].
\]

We now need to derive estimates for the remaining terms containing in ${\cal R}_T{\cal G}$. For $s\in [\alpha , \tilde{\alpha} +3]$ one has
\[
\begin{array}{r}
\|{\cal R}_T(-\partial_t(1-S_{\theta_n})\varphi_n +  (1-S_{\theta_n})\partial_t\varphi_n)\|_{H^s(\Omega_T)}
\leq C \bigl\{
\|\partial_t(S_{\theta_n}\varphi_n) \|_{H^s(\partial\Omega_T)}+\|S_{\theta_n}(\partial_t\varphi_n) \|_{H^s(\partial\Omega_T)} \bigr\}\quad\\[6pt]
\leq C\bigl\{
\|S_{\theta_n}\varphi_n \|_{H^{s+1}(\partial\Omega_T)}
+\theta_n^{s-\alpha}\| \varphi_n\|_{H^{\alpha+1}(\partial\Omega_T)}\bigr\}\leq C\delta\theta_n^{s+1-\alpha},
\end{array}
\]
while for $s\in [3 , \tilde{\alpha} -1]$ we obtain (recall that $\tilde{\alpha}=\alpha +3$)
\[
\|{\cal R}_T(\partial_t(1-S_{\theta_n})\varphi_n )\bigr\|_{H^s(\Omega_T)}\leq C\delta\theta_n^{s+1-\alpha},
\]
\[
\|{\cal R}_T((1-S_{\theta_n})\partial_t\varphi_n )\|_{H^s(\Omega_T)}\leq C\theta_n^{s-\alpha}\| \varphi_n\|_{H^{\alpha+1}(\partial\Omega_T)}\leq C\delta\theta_n^{s+1-\alpha}.
\]
Here we have, in particular, used Lemma \ref{l3}. We do not get estimates for all the remaining terms containing in ${\cal R}_T{\cal G}$ and leave corresponding calculations to the reader.  Collecting these estimates and the estimates above, we finally have
\[
\|v_{1,n+1/2} - S_{\theta_n}v_{1,n}\|_{H^s(\Omega_T)} \leq C\delta\theta_n^{s+1-\alpha},\quad
s\in [3,\tilde{\alpha} +3],
\]
that is equivalent to \eqref{94}.
\end{proof}

\paragraph{Estimate of the second substitution errors.} The second substitution errors
\[
{e}'''_k = {\cal L}'(S_{\theta_k}{U}_{k} ,S_{\theta_k}{\Psi}_{k})(\delta{{U}}_k,\delta{\Psi}_{k})- {\cal L}'({U}_{k+1/2} ,{\Psi}_{k+1/2})(\delta{{U}}_k,\delta{\Psi}_{k})
\]
and
\[
\tilde{e}'''_k =
\bigl({\cal B}'(S_{\theta_k}{U}_{k},S_{\theta_k}\varphi_{k})(\delta{{U}}_k,\delta \varphi_{k})-
{\cal B}'({U}_{k+1/2},\varphi_{k+1/2})(\delta{{U}}_k,\delta \varphi_{k})
\bigr)|_{x_1=0}
\]
can be written as
\begin{equation}
{\displaystyle
{e}'''_k =\int_0^1\mathbb{L}''\bigl({U}^a+{U}_{k+1/2} +\tau (S_{\theta_k}{U}_k-{U}_{k+1/2}),  }
{\Psi}^a+S_{\theta_k}{\Psi}_k) \bigl((\delta{U}_k,\delta{\Psi}_k),
(S_{\theta_k} {U}_k -{U}_{k+1/2},0) \bigr) d\tau ,
\label{95}
\end{equation}
\begin{equation}
\tilde{e}'''_k
 =\mathbb{B}''\bigl(
(\delta{U}_k|_{x_1=0},\delta \varphi_k),((S_{\theta_k}{U}_k-{U}_{k+1/2})|_{x_1=0},0)\bigr).
\label{96}
\end{equation}
Employing (\ref{95}) and (\ref{96}), we get the following result.

\begin{lemma}
Let $\alpha \geq 4$. There exist $\delta >0$, $T>0$ sufficiently small, and $\theta_0 \geq 1$ sufficiently large, such that for all $k=0,\ldots n-1$, and for all integer $s\in [3,\widetilde{\alpha}-1]$, one has
\begin{equation}
\|{e}'''_k\|_{H^s(\Omega_T)}\leq C\delta^2\theta_k^{L_3(s)-1}\Delta_k\label{97}
\end{equation}
and $\tilde{e}'''_k=0$,
where $L_3(s)=\max \{ (s+1-\alpha )_+ +8-2\alpha ,s+5-2\alpha \}$.
\label{l6}
\end{lemma}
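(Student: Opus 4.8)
The plan is to follow the proofs of Lemmas~\ref{l4} and \ref{l5} almost verbatim, now feeding the integral representation \eqref{95} for ${e}'''_k$ and the explicit formula \eqref{96} for $\tilde e'''_k$ into Proposition~\ref{p2}, with the role played there by the smoothing estimates \eqref{82}, \eqref{83} taken over by the modified-state estimate \eqref{94} of Proposition~\ref{p3}.

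For the interior error I would first check that Proposition~\ref{p2} applies to the $\mathbb{L}''$ in \eqref{95}: its ``coefficient'' $\bigl(U^a+U_{k+1/2}+\tau(S_{\theta_k}U_k-U_{k+1/2}),\ \Psi^a+S_{\theta_k}\Psi_k\bigr)$ has third-order norm $\le 2C_*$ for $\delta$ small, by \eqref{66}, ($H_{n-1}$), \eqref{80}, \eqref{83} and \eqref{94}, exactly as in Lemma~\ref{l5}. Then I would apply Proposition~\ref{p2} with the two arguments $(\delta U_k,\delta{\Psi}_k)$ and $(S_{\theta_k}U_k-U_{k+1/2},0)$, inserting ${\nl(\delta U_k,\delta\varphi_k)\nr}_3\le\delta\theta_k^{2-\alpha}\Delta_k$ and ${\nl(\delta U_k,\delta\varphi_k)\nr}_{s+1}\le\delta\theta_k^{s-\alpha}\Delta_k$ from point $a)$ of ($H_{n-1}$), the bounds $\|S_{\theta_k}U_k-U_{k+1/2}\|_{H^3(\Omega_T)}\le C\delta\theta_k^{4-\alpha}$ and $\|S_{\theta_k}U_k-U_{k+1/2}\|_{H^{s+1}(\Omega_T)}\le C\delta\theta_k^{s+2-\alpha}$ from \eqref{94}, and the $(s{+}1)$-th order norm of the coefficient bounded by $C_*+C\delta(\theta_k^{(s+1-\alpha)_+}+\theta_k^{s+2-\alpha})$ (again from \eqref{83}, \eqref{94}, and \eqref{84} when $s+1=\alpha$). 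Splitting, as in Lemmas~\ref{l4}--\ref{l5}, into the cases $s+1\ne\alpha$ and $s+1=\alpha$, using \eqref{80}, \eqref{81}, \eqref{83}, \eqref{84} accordingly and absorbing stray powers by $\theta_k\Delta_k\le 1/2$, one reads off \eqref{97}. The passage from $L_2(s)$ to $L_3(s)$ is accounted for precisely by the replacement of the $O(\theta_k^{s-\alpha})$ bound \eqref{82} on $(I-S_{\theta_k})U_k$ by the $O(\theta_k^{s+1-\alpha})$ bound \eqref{94} on $S_{\theta_k}U_k-U_{k+1/2}$, which costs one extra power of $\theta_k$ through the order-$3$ factor and one more through the coefficient factor, i.e.\ $+2$ in the exponent; since $\alpha\ge 4$ the resulting terms still close up against $\theta_k^{L_3(s)-1}\Delta_k$.

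For the boundary error I claim $\tilde e'''_k\equiv 0$, which I would read straight off \eqref{96}. By the explicit form of $\mathbb{B}''$ its second component vanishes identically, and its first component is $v'_2\partial_2\varphi''+v'_3\partial_3\varphi''+v''_2\partial_2\varphi'+v''_3\partial_3\varphi'$. In \eqref{96} the $\varphi''$-slot is filled by $0$, so the first two terms drop, while by the definition \eqref{93} of the modified state $v_{j,k+1/2}=S_{\theta_k}v_{j,k}$ for $j=2,3$, hence the $v_2$- and $v_3$-components of $(S_{\theta_k}U_k-U_{k+1/2})|_{x_1=0}$ are zero and the last two terms drop as well. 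Thus $\tilde e'''_k=0$.

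I do not expect a genuine obstacle: the interior estimate is entirely parallel to Lemmas~\ref{l4}--\ref{l5}, and the only things requiring care are keeping every Sobolev index inside the range where \eqref{94} ($s\in[3,\tilde\alpha+3]$), \eqref{83}--\eqref{84} and point $a)$ of ($H_{n-1}$) ($s\le\tilde\alpha-1$) are available --- which is exactly why the statement is restricted to $s\in[3,\tilde\alpha-1]$ --- together with the exponent bookkeeping that turns $L_2$ into $L_3$. The one conceptual point, rather than a difficulty, is noticing that the particular structure of $\mathbb{B}''$ combined with $\varphi_{k+1/2}=S_{\theta_k}\varphi_k$ and $v_{j,k+1/2}=S_{\theta_k}v_{j,k}$ $(j=2,3)$ from \eqref{93} forces the second substitution error on the boundary to vanish outright.
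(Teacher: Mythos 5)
Your proposal is correct and follows essentially the same route as the paper: bound the coefficient of $\mathbb{L}''$ in \eqref{95} at orders $3$ and $s+1$ via Lemma \ref{l3} and Proposition \ref{p3}, apply Proposition \ref{p2} with the bounds from point $a)$ of $(H_{n-1})$ and \eqref{94}, and conclude \eqref{97} by the same exponent bookkeeping, while $\tilde e'''_k=0$ follows exactly as you say from the explicit form of $\mathbb{B}''$ together with \eqref{93} (the paper states this without spelling out the detail you supply).
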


\begin{proof}
Using Lemma \ref{l3} and Proposition \ref{p3}, we obtain the estimate
\[
\sup_{\tau\in [0,1]}{\nl ({U}^a+{U}_{k+1/2} +\tau (S_{\theta_k}{U}_k-{U}_{k+1/2}),
\varphi^a+S_{\theta_k}\varphi_k)\nr}_{3}\leq 2C_*
\]
for $\delta$ sufficiently small, i.e., we may apply Proposition \ref{p2}. Similarly, one gets
\[
\begin{array}{l}
{\nl ({U}^a+{U}_{k+1/2} +\tau (S_{\theta_k}{U}_k-{U}_{k+1/2}),
\varphi^a+S_{\theta_k}\varphi_k)\nr}_{s+1}
\\[6pt]\qquad
\leq C\bigl\{C_*+\delta\theta_k^{s+2-\alpha} +\delta\theta_k^{(s+1-\alpha)_++1}\bigr\}\leq  C\delta\theta_n^{(s+1-\alpha)_++1}.
\end{array}
\]
Applying Proposition \ref{p2}, we obtain (\ref{97}):
\[
\begin{array}{rl}
\|{e}'''_k\|_{H^s(\Omega_T)}\leq & C\Bigl\{ \delta\theta_k^{(s+1-\alpha)_++1}\delta\theta_k^{2-\alpha}\Delta_k\delta\theta_k^{4-\alpha}+
\delta\theta_k^{s+-\alpha}\Delta_k\delta\theta_k^{4-\alpha}\\[6pt]
& \quad +\delta\theta_k^{2-\alpha}\Delta_k\delta\theta_k^{s+2-\alpha}\Bigr\}\leq C\delta^2\theta_k^{L_3(s)-1}\Delta_k.
\end{array}
\]
Using the explicit form of $\mathbb{B}''$, we easily get $\tilde{e}'''_k=0$.
\end{proof}

\paragraph{Estimate of the last error term.} We now estimate the last error term 
\[
{ D}_{k+1/2}\delta\Psi_k=\frac{\delta\Psi_k}{\partial_1(\Phi^a+\Psi_{n+1/2})}\,{ R}_k,
\]
where ${ R}_k:=\partial_1\left\{ \mathbb{L}({U}^a +{U}_{k+1/2} ,{\Psi}^a+{\Psi}_{k+1/2})\right\}$. Note that
\[
|\partial_1(\Phi^a+\Psi_{n+1/2})|=|1+\partial_1(\Psi^a+\Psi_{n+1/2})|\geq 1/2,
\]
provided that $T$ and $\delta$ are small enough.

\begin{lemma}
Let $\alpha \geq 5$. There exist $\delta >0$, $T>0$ sufficiently small, and $\theta_0 \geq 1$ sufficiently large, such that for all $k=0,\ldots n-1$, and for all integer $s\in [3,\widetilde{\alpha}-2]$, one has
\begin{equation}
\|{D}_{k+1/2}\delta{\Psi}_k\|_{H^s(\Omega_T)}\leq C\delta^2\theta_k^{L(s)-1}\Delta_k,\label{98}
\end{equation}
where $L(s)=\max \{ (s+2-\alpha )_+ +8-2\alpha ,(s+1-\alpha)_++9-2\alpha ,s+6-2\alpha \}$.
\label{l7}
\end{lemma}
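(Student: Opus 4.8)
The plan is to factor $D_{k+1/2}\delta\Psi_k=b_k\,R_k\,\delta\Psi_k$, with $\delta\Psi_k=\chi(x_1)\delta\varphi_k$, $b_k:=(\partial_1(\Phi^a+\Psi_{k+1/2}))^{-1}$ and $R_k:=\partial_1\{\mathbb{L}(U^a+U_{k+1/2},\Psi^a+\Psi_{k+1/2})\}$, to bound the three factors separately, and to combine them by two applications of the Moser inequality \eqref{40}:
\[
\|D_{k+1/2}\delta\Psi_k\|_{H^s(\Omega_T)}\leq C\bigl(\|R_k\|_{H^s}\|\delta\Psi_k\|_{L_\infty}+\|R_k\|_{L_\infty}\|\delta\Psi_k\|_{H^s}+\|b_k\|_{H^s}\|R_k\|_{L_\infty}\|\delta\Psi_k\|_{L_\infty}\bigr)
\]
(all norms over $\Omega_T$). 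Since $|\partial_1(\Phi^a+\Psi_{k+1/2})|\geq 1/2$ for $T,\delta$ small, $b_k=F(\partial_1(\Psi^a+\Psi_{k+1/2}))$ with $F$ smooth and bounded together with its derivatives, so \eqref{41}, \eqref{83} and \eqref{66} give $\|b_k\|_{L_\infty}\leq 2$ and $\|b_k\|_{H^s}\leq C\theta_k^{(s+1-\alpha)_+}$, while point a) of $(H_{n-1})$ gives $\|\delta\Psi_k\|_{H^s}\leq C\delta\theta_k^{s-\alpha-1}\Delta_k$ and $\|\delta\Psi_k\|_{L_\infty}\leq C\delta\theta_k^{2-\alpha}\Delta_k$.

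The heart of the matter is a sharp bound for $R_k$: the crude estimate of $R_k$ through the Sobolev norms of the state is too lossy, since it produces too few negative powers of $\theta_k$, and one must use that the current iterate is almost a solution. Because ${\cal L}(0,0)=0$ and $\delta\Psi_k$ vanishes in the past, in the above inequality $R_k$ may be replaced by $\partial_1\{{\cal L}(U_{k+1/2},\Psi_{k+1/2})-f^a\}$; I would then decompose ${\cal L}(U_{k+1/2},\Psi_{k+1/2})-f^a$ into the residual ${\cal L}(U_k,\Psi_k)-f^a$ plus the contribution of $U_{k+1/2}-U_k$ and $\Psi_{k+1/2}-\Psi_k$ furnished by the fundamental theorem of calculus, writing $U_{k+1/2}-U_k=-(I-S_{\theta_k})U_k+(U_{k+1/2}-S_{\theta_k}U_k)$ and $\Psi_{k+1/2}-\Psi_k=-(I-S_{\theta_k})\Psi_k$. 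The residual is controlled in $H^{s+1}$ by point b) of $(H_{n-1})$, which is available for $s\leq\widetilde{\alpha}-3$ and yields $\|{\cal L}(U_k,\Psi_k)-f^a\|_{H^{s+1}}\leq 2\delta\theta_k^{s-\alpha}$; the linearized contribution is estimated by an operator bound for $\mathbb{L}'$ of the same type as Proposition \ref{p2}, which rests on the Moser and embedding inequalities and --- crucially --- on the gravity cancellation of Remark \ref{rcheck} together with $\rho^a-\epsilon_1\in H^{\mu+1}$ from Lemma \ref{l2} that keeps $\mathbb{L}$ inside Sobolev spaces, combined with \eqref{82} and \eqref{94}. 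This gives, for $s\in[3,\widetilde{\alpha}-3]$, $\|R_k\|_{H^s}\leq C\delta\theta_k^{s+3-\alpha}$ and hence (taking $s=3$) $\|R_k\|_{L_\infty}\leq C\|R_k\|_{H^3}\leq C\delta\theta_k^{6-\alpha}$; for the remaining value $s=\widetilde{\alpha}-2$, where point b) of $(H_{n-1})$ is no longer available, I would use the plain bound through the state norms, $\|R_k\|_{H^{\widetilde{\alpha}-2}}\leq C\delta\theta_k^{(\widetilde{\alpha}+1-\alpha)_+}=C\delta\theta_k^{4}$ (recall $\widetilde{\alpha}=\alpha+3$), while still using the $L_\infty$ bound above.

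Inserting these bounds into the displayed Moser inequality and using $\theta_k\Delta_k\leq 1/2$, the exponents of $\theta_k$ produced by the three terms are $s+5-2\alpha$, $s+5-2\alpha$ and $(s+1-\alpha)_++8-2\alpha$ when $s\leq\widetilde{\alpha}-3$, and at most $6-\alpha=L(\widetilde{\alpha}-2)-1$ when $s=\widetilde{\alpha}-2$; each of these is $\leq L(s)-1$ for $\alpha\geq 5$, so \eqref{98} follows. The main obstacle is the one flagged above --- one cannot use the naive estimate of $R_k$ and must route it through the smallness of the residual ${\cal L}(U_k,\Psi_k)-f^a$ supplied by the inductive hypothesis, which also forces the separate treatment of the top index $s=\widetilde{\alpha}-2$ --- whereas the remaining exponent bookkeeping is entirely parallel to that in Lemmas \ref{l4}--\ref{l6}.
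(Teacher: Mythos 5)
Your proposal is correct and follows essentially the same route as the paper: factor $D_{k+1/2}\delta\Psi_k = b_k R_k\,\delta\Psi_k$, apply the Moser calculus inequality (the paper's version (99) uses $H^3$ norms and $\|\varphi^a+\varphi_{k+1/2}\|_{H^s}$ where you use $L_\infty$ norms and $\|b_k\|_{H^s}$, giving the same or marginally looser exponents, all still $\leq L(s)-1$), then estimate $R_k$ by rewriting $\mathbb{L}(U^a+U_{k+1/2},\Psi^a+\Psi_{k+1/2})$ as the residual ${\cal L}(U_k,\Psi_k)-f^a$ plus a fundamental-theorem-of-calculus contribution of $\mathbb{L}'$, controlled respectively by point b) of $(H_{n-1})$ and a Proposition~\ref{p2}-type tame bound, with a separate plain bound at the top index $s=\tilde\alpha-2$. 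This is precisely the decomposition (100)--(102) of the paper's proof, so your key observation that the naive $R_k$ bound must be routed through the residual is exactly the idea the paper uses.
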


\begin{proof} The proof follows from the arguments as in \cite{Al,CS} (see also \cite{Tr}).
Using the Moser-type and embedding inequalities, we obtain
\begin{equation}
\begin{array}{l}
\|[{ D}_{k+1/2}\delta\Psi_k\|_{H^s(\Omega_T)}\leq C\Bigl\{\|\delta \varphi_k\|_{H^s(\partial\Omega_T)}\|{ R_k}\|_{H^3(\Omega_T)}\\[6pt]
\qquad +\|\delta \varphi_k\|_{H^3(\partial\Omega_T)}\bigl(\|{ R_k}\|_{H^s(\Omega_T)} +\|{ R_k}\|_{H^3(\Omega_T)}\|\varphi^a+\varphi_{k+1/2}\|_{H^s(\partial\Omega_T)} \bigr)\Bigr\}
\end{array}\label{99}
\end{equation}
(note that $\|\partial_1(\Psi^a+\Psi_{n+1/2})\|_{H^s(\Omega_T)}\leq C\|\varphi^a+\varphi_{k+1/2}\|_{H^s(\partial\Omega_T)}$).
To estimate ${ R}_k$ we utilize the decomposition
\[
\begin{array}{r}
\mathbb{L}({U}^a +{U}_{k+1/2} ,{\Psi}^a+{\Psi}_{k+1/2})={\cal L}({U}_{k} ,{\Psi}_{k})-{f}^a 
 +\mathbb{L}({U}^a +{U}_{k+1/2} ,{\Psi}^a+{\Psi}_{k+1/2})
\\[6pt]
 - \mathbb{L}({U}^a +{U}_{k} ,{\Psi}^a+{\Psi}_{k})
={\cal L}({U}_{k} ,{\Psi}_{k})-{f}^a
{\displaystyle +\int_0^1\mathbb{L}'\bigl({U}^a+{U}_{k} +\tau ({U}_{k+1/2}-{U}_{k}),  }
\\[12pt]
{\Psi}^a+{\Psi}_k +\tau({\Psi}_{k+1/2}-{\Psi}_k)\bigr) ({U}_{k+1/2}-{U}_{k},{\Psi}_{k+1/2}-{\Psi}_k) d\tau .
\end{array}
\]

Clearly,
\begin{equation}
\|{ R}\|_{H^s(\Omega_T)}\leq \|{\cal L}({U}_{k} ,{\Psi}_{k})-{f}^a\|_{H^s(\Omega_T)}+\sup_{\tau\in [0,1]}\|\mathbb{L}'(\ldots ) (\ldots )\|_{H^{s+1}(\Omega_T)}\label{100}
\end{equation}
(for short we drop the arguments of $\mathbb{L}'$). It follows from point $b)$ of ($H_{n-1}$) that
\begin{equation}
\|{\cal L}({U}_{k} ,{\Psi}_{k})-{f}^a\|_{H^{s+1}(\Omega_T)}\leq 2\delta\theta_k^{s-\alpha}
\label{101}
\end{equation}
for $s\in [3,\tilde{\alpha}-3]$.
We estimate $\mathbb{L}'$ similarly to $\mathbb{L}''$ (see Proposition \ref{p2}). One has
\[
\sup_{\tau\in [0,1]}{\nl ({U}^a+{U}_{k} +\tau ({U}_{k+1/2}-{U}_{k}),
\varphi^a+\varphi_k +\tau(\varphi_{k+1/2}-\varphi_k))\nr}_{3}\leq 2C_*
\]
for $\delta$ small enough. Then, omitting detailed calculations, we get the estimate
\[
\|\mathbb{L}'(\ldots ) (\ldots )\|_{H^{s+1}(\Omega_T)}\leq C\delta (\theta_k^{s+3-\alpha}+\theta_k^{(s+2-\alpha )_++5-\alpha})
\]
for $s\in [3,\tilde{\alpha}-3]$. This estimate, (\ref{100}), and  (\ref{101}) imply
\begin{equation}
\|{ R}\|_{H^{s}(\Omega_T)}\leq C\delta (\theta_k^{s+3-\alpha}+\theta_k^{(s+2-\alpha )_++5-\alpha})\label{102}
\end{equation}
for $s\in [3,\tilde{\alpha}-3]$. For $s=\tilde{\alpha}-2$ we estimate as follows:
\[
\begin{array}{l}
\|{ R}\|_{H^{s}(\Omega_T)}\leq \|\mathbb{L}({U}^a +{U}_{k+1/2} ,{\Psi}^a+{\Psi}_{k+1/2})\|_{H^{s+1}(\Omega_T)}\\[6pt]
\qquad\leq C {\nl ({U}^a+({U}_{k+1/2}-S_{\theta_n}{U}_{k})+S_{\theta_n}{U}_{k},
\varphi^a +S_{\theta_n}\varphi_{k})\nr}_{s+2}\leq C\delta\theta_k^{s+3-\alpha}.
\end{array}
\]
That is, we get estimate (\ref{102}) for $s\in [3,\tilde{\alpha}-2]$. Using then (\ref{99}),
we obtain (\ref{98}), provided that $\alpha \geq 5$. 
\end{proof}

\paragraph{Convergence of the iteration scheme.}
Lemmas \ref{l4}--\ref{l7} yield the estimate of ${e}_n$ and $\tilde{e}_n$ defined in (\ref{77}) as the sum of all the errors of the $k$th step.

\begin{lemma}
Let $\alpha \geq 5$. There exist $\delta >0$, $T>0$ sufficiently small, and $\theta_0 \geq 1$ sufficiently large, such that
for all $k=0,\ldots n-1$, and for all integer $s\in [3,\widetilde{\alpha}-2]$, one has
\begin{equation}
\|{e}_k\|_{H^s(\Omega_T)}+\|\tilde{e}_k\|_{H^s(\partial\Omega_T)}\leq C\delta^2\theta_k^{L(s)-1}\Delta_k,
\label{103}
\end{equation}
where $L(s)$ is defined in Lemma \ref{l7}.
\label{l8}
\end{lemma}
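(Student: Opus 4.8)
The plan is to estimate $e_k$ and $\tilde e_k$ summand by summand via the decomposition (\ref{77}), using Lemmas \ref{l4}--\ref{l7}, and then to observe that the exponent $L(s)$ of Lemma \ref{l7} dominates every exponent produced by the other error terms.

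First I would write, using (\ref{77}) and the triangle inequality,
\[
\|{e}_k\|_{H^s(\Omega_T)}\leq \|{e}_k'\|_{H^s(\Omega_T)}+\|{e}_k''\|_{H^s(\Omega_T)}+\|{e}_k'''\|_{H^s(\Omega_T)}+\|{D}_{k+1/2}\delta{\Psi}_k\|_{H^s(\Omega_T)},
\]
\[
\|\tilde{e}_k\|_{H^s(\partial\Omega_T)}\leq \|\tilde{e}_k'\|_{H^s(\partial\Omega_T)}+\|\tilde{e}_k''\|_{H^s(\partial\Omega_T)},
\]
the term $\tilde{e}_k'''$ being absent since $\tilde{e}_k'''=0$ by Lemma \ref{l6}. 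On the range $s\in[3,\widetilde{\alpha}-2]$ — the common part of the $s$-intervals of Lemmas \ref{l4}--\ref{l7}, with $\widetilde{\alpha}-2$ the smallest of the upper endpoints — Lemmas \ref{l4}, \ref{l5}, \ref{l6}, \ref{l7} bound the six summands above by $C\delta^2\theta_k^{L_1(s)-1}\Delta_k$, $C\delta^2\theta_k^{L_2(s)-1}\Delta_k$, $C\delta^2\theta_k^{L_3(s)-1}\Delta_k$ and $C\delta^2\theta_k^{L(s)-1}\Delta_k$ respectively, provided $\delta,T$ are small enough, $\theta_0$ large enough, and $\alpha\geq 5$ (inherited from Lemma \ref{l7}). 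Hence it suffices to prove
\[
L_1(s)\leq L_2(s)\leq L_3(s)\leq L(s)\qquad\text{for every integer }s\geq 3\text{ and every }\alpha\geq 4,
\]
and then to add the finitely many contributions, absorbing their number into $C$ (all exponents being $\leq L(s)$ and $\theta_k\geq 1$).

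These inequalities are an elementary comparison of piecewise-affine functions of $s$. With $a:=(s+1-\alpha)_+$ one has $L_1=\max\{a+4-2\alpha,\ s+2-2\alpha\}$, $L_2=\max\{a+6-2\alpha,\ s+5-2\alpha\}$, $L_3=\max\{a+8-2\alpha,\ s+5-2\alpha\}$ and $L=\max\{(s+2-\alpha)_++8-2\alpha,\ a+9-2\alpha,\ s+6-2\alpha\}$, so comparing the arguments of each $\max$ pairwise ($a+4-2\alpha<a+6-2\alpha<a+8-2\alpha<a+9-2\alpha$ and $s+2-2\alpha<s+5-2\alpha<s+6-2\alpha$) gives $L_1\leq L_2\leq L_3$ and $L_3\leq L$ at once. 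The only point requiring a separate word is the $s$-range of Lemma \ref{l5}, which is stated for $s\in[6,\widetilde{\alpha}-2]$: for the three values $s\in\{3,4,5\}$ I would re-run its proof verbatim, since that proof invokes only (\ref{66}), the inductive hypothesis $(H_{n-1})$, the smoothing bounds (\ref{82})--(\ref{83}), Proposition \ref{p2} and the trace theorem, all valid for any integer $s\geq 3$; moreover $\alpha=m+1\geq 7$ (and already $\alpha\geq 5$ forces the exceptional case $s+1=\alpha$ to occur only for $s\geq 4$), so for $s\leq 5$ one always lands in one of the two cases treated there, and (\ref{91})--(\ref{92}) persist on $[3,5]$.

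As for difficulty, there is essentially none beyond bookkeeping: the analytic content — pairing each factor of $\mathbb{L}''$, $\mathbb{B}''$ and $R_k$ with the appropriate smoothing/inductive estimate so that the resulting powers of $\theta_k$ do not exceed $L(s)$ — is already carried out in Lemmas \ref{l4}--\ref{l7}. For Lemma \ref{l8} the only things to watch are the domination chain $L_1\leq L_2\leq L_3\leq L$ displayed above and the minor range extension of Lemma \ref{l5}, after which (\ref{103}) follows by summation over the six terms.
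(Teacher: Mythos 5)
Your proposal is correct and follows exactly the route the paper intends: the paper gives no separate proof of this lemma, stating only that Lemmas \ref{l4}--\ref{l7} yield \eqref{103} via the decomposition \eqref{77}, which is precisely your triangle-inequality argument combined with the elementary domination $L_1(s)\leq L_2(s)\leq L_3(s)\leq L(s)$. Your extra remark about extending the $s$-range of Lemma \ref{l5} from $[6,\tilde{\alpha}-2]$ down to $s=3,4,5$ (its proof indeed uses only \eqref{66}, $(H_{n-1})$, \eqref{82}--\eqref{84} and Proposition \ref{p2}, all valid for $s\geq 3$) is a legitimate point of bookkeeping that the paper passes over silently.
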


\begin{remark}{\rm
In principle, we could try to use the advantage of the fact that in the tame estimate \eqref{38} we do not lose derivatives from the source term $f$ to the solution. To this end, in Lemma \ref{l8} we could estimate errors ${e}_n$ and $\tilde{e}_n$ separately. However, this does not reduce the number of derivatives lost from the initial data to the solution in the existence Theorem \ref{t2}. In fact, we can even use a roughened version of estimate \eqref{38}
in which we lose one derivative from $f$ to the solution.}\label{rough}
\end{remark}

Lemma \ref{l8} gives the estimate of the accumulated errors ${E}_n$ and $\widetilde{E}_n$.

\begin{lemma}
Let $\alpha \geq 7$.  There exist $\delta >0$, $T>0$ sufficiently small, and $\theta_0 \geq 1$ sufficiently large, such that
\begin{equation}
\|{E}_n\|_{H^{\alpha +2}(\Omega_T)}+\|\widetilde{E}_n\|_{H^{\alpha +2}(\partial\Omega_T)}\leq C\delta^2\theta_n,
\label{104}
\end{equation}
where $L(s)$ is defined in Lemma \ref{l7}.
\label{l9}
\end{lemma}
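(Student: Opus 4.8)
The plan is to bound $E_n$ and $\widetilde{E}_n$ simply by summing, over $k=0,\ldots ,n-1$, the single-step estimates for $e_k$ and $\tilde e_k$ supplied by Lemma \ref{l8} (i.e. by Lemmas \ref{l4}--\ref{l7}), exploiting that the target order $\alpha+2$ is high enough for the relevant exponent to be nonpositive once $\alpha\geq 7$. Recall from \eqref{77}--\eqref{78} that $E_n=\sum_{k=0}^{n-1}e_k$ and $\widetilde{E}_n=\sum_{k=0}^{n-1}\tilde e_k$, and that with the choice $\widetilde{\alpha}=\alpha+3$ made above one has $\alpha+2=\widetilde{\alpha}-1$.

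First I would establish the single-step bound at order $s=\alpha+2$, namely
\[
\|e_k\|_{H^{\alpha+2}(\Omega_T)}+\|\tilde e_k\|_{H^{\alpha+2}(\partial\Omega_T)}\leq C\delta^2\theta_k^{L(\alpha+2)-1}\Delta_k,\qquad k=0,\ldots ,n-1,
\]
with $L$ as in Lemma \ref{l7}. For the quadratic errors $e'_k,\tilde e'_k$ and the second substitution errors $e'''_k$ this is already contained in Lemmas \ref{l4} and \ref{l6}, whose range $[3,\widetilde{\alpha}-1]$ covers $\alpha+2$ (and $\tilde e'''_k=0$). For $e''_k,\tilde e''_k$ and for the last error term $D_{k+1/2}\delta\Psi_k$, whose estimates in Lemmas \ref{l5} and \ref{l7} were stated only up to $s=\widetilde{\alpha}-2$, I would re-run the very same computations at $s=\widetilde{\alpha}-1$: this is legitimate because every coefficient entering \eqref{89}--\eqref{90} and \eqref{99} is controlled with one spare derivative, the approximate solution through \eqref{66} (in $H^{\widetilde{\alpha}+4}$), the smoothed iterates through \eqref{83} (in $H^{\widetilde{\alpha}+4}$), the modified state through \eqref{94} (in $H^{\widetilde{\alpha}+3}$), and the increments $\delta U_k,\delta\varphi_k$ through point $a)$ of $(H_{n-1})$ (in $H^{\widetilde{\alpha}}$); carrying the proof of Lemma \ref{l7} through at $s=\widetilde{\alpha}-1$ reproduces the exponent $L(\widetilde{\alpha}-1)=L(\alpha+2)$.

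Then I would sum. A direct computation from the definition of $L$ gives $L(\alpha+2)=\max\{12-2\alpha,\,8-\alpha\}$, so $L(\alpha+2)\leq 1$ exactly when $\alpha\geq 7$; this is where the stronger hypothesis on $\alpha$ enters, in contrast with $\alpha\geq 4$ or $\alpha\geq 5$ in the earlier lemmas. Consequently $\theta_k^{L(\alpha+2)-1}\leq 1$ for every $k$, whence
\[
\|E_n\|_{H^{\alpha+2}(\Omega_T)}\leq\sum_{k=0}^{n-1}\|e_k\|_{H^{\alpha+2}(\Omega_T)}\leq C\delta^2\sum_{k=0}^{n-1}\Delta_k=C\delta^2(\theta_n-\theta_0)\leq C\delta^2\theta_n,
\]
and identically $\|\widetilde{E}_n\|_{H^{\alpha+2}(\partial\Omega_T)}\leq C\delta^2\theta_n$, which is \eqref{104}.

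The main obstacle is the middle step: one must verify that none of the single-step error estimates degenerates when pushed from order $\widetilde{\alpha}-2$ up to $\widetilde{\alpha}-1=\alpha+2$, i.e.\ that the regularity margins of the approximate solution, of the smoothed states and of the modified state really suffice, and then check that the resulting worst exponent $L(\alpha+2)$ does not exceed $1$ — which is precisely what forces $\alpha\geq 7$ here.
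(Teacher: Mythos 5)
Your proof is correct, and its core is the same as the paper's: bound $E_n,\widetilde E_n$ by summing the single-step errors, observe that the worst exponent $L(\alpha+2)=\max\{12-2\alpha,\,8-\alpha\}$ is $\leq 1$ precisely for $\alpha\geq 7$, and telescope $\sum_{k}\Delta_k=\theta_n-\theta_0\leq\theta_n$. Where you genuinely diverge is in how the admissible range of $s$ is handled. The paper's own proof of Lemma \ref{l9} simply applies \eqref{103} at $s=\alpha+2$ and, noting this needs $\alpha+2\in[3,\tilde\alpha-2]$, declares that ``the minimal possible $\tilde\alpha$ is $\alpha+4$'' — i.e.\ it silently overrides the earlier choice $\tilde\alpha=m+4=\alpha+3$ (an internal inconsistency of the paper) and runs the whole scheme with $\tilde\alpha=\alpha+4$, which is then consumed exactly by the tame estimate at $s=\tilde\alpha$ through \eqref{66}. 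You instead keep $\tilde\alpha=\alpha+3$, so that $\alpha+2=\tilde\alpha-1$ falls outside the stated ranges of Lemmas \ref{l5} and \ref{l7}, and you close the gap by pushing those two lemmas one order higher; this is legitimate, since every ingredient does have the one-derivative margin you invoke — the computation in the proof of Lemma \ref{l5} is in fact already carried out under the condition $s+1\leq\tilde\alpha$, and for Lemma \ref{l7} the crude bound on $R_k$ at $s=\tilde\alpha-1$ only needs control of the coefficients at order $s+2=\tilde\alpha+1$, available from \eqref{66}, \eqref{83} and \eqref{94}, while $\delta\varphi_k$ and $(I-S_{\theta_k})U_k$ are controlled up to $H^{\tilde\alpha}$ by point $a)$ of $(H_{n-1})$ and \eqref{82}. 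So your route costs an extra verification of two endpoint cases but economizes on $\tilde\alpha$ (hence on the regularity of the coefficients fed into the tame estimate), whereas the paper's route is shorter here at the price of fixing $\tilde\alpha=\alpha+4$ throughout; either bookkeeping yields \eqref{104}.
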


\begin{proof} One can check that $L(\alpha +2)\leq 1$ if $\alpha \geq 7$. It follows from (\ref{103}) that
\[
{\nl ({E}_n,\widetilde{E}_n)\nr}_{\alpha +2}\leq \sum_{k=0}^{n-1}{\nl ({e}_k,\widetilde{e}_k)\nr}_{\alpha +2}\leq \sum_{k=0}^{n-1}C\delta^2\Delta_k\leq C\delta^2\theta_n
\]
for $\alpha \geq 7$ and $\alpha +2\in [3,\tilde{\alpha}-2]$, i.e., $\tilde{\alpha}\geq \alpha +4$. The minimal possible
$\tilde{\alpha}$ is $\alpha +4$, i.e., our choice $\tilde{\alpha}= \alpha +4$ is suitable.
\end{proof}

We now derive the estimates of the source terms ${f}_n$ and ${g}_n$ defined in (\ref{79}).

\begin{lemma}
Let $\alpha \geq 7$.  There exist $\delta >0$, $T>0$ sufficiently small, and $\theta_0 \geq 1$ sufficiently large, such that for all integer $s\in [3,\widetilde{\alpha}+1]$, one has
\begin{align}
& \|{f}_n\|_{H^{s}(\Omega_T)}\leq  C\Delta_n\bigl\{ \theta_n^{s-\alpha -2}\left( \|{f}^a\|_{H^{\alpha +1}(\Omega_T)}+\delta^2\right)+\delta^2\theta_n^{L(s)-1} \bigr\},\label{105}\\[6pt]
 & \|{g}_n\|_{H^{s}(\partial\Omega_T)}\leq  C\delta^2\Delta_n\bigl( \theta_n^{L(s)-1}+\theta_n^{s-\alpha -2}\bigr).
\label{106}
\end{align}
\label{l10}
\end{lemma}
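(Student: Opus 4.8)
The plan is to read off closed formulas for $f_n$ and $g_n$ from the defining relations \eqref{79} by telescoping. Subtracting the relation at index $n-1$ from the one at index $n$ and using $E_n=E_{n-1}+e_{n-1}$, $\widetilde{E}_n=\widetilde{E}_{n-1}+\tilde{e}_{n-1}$, one obtains for $n\geq 1$
\[
f_n=(S_{\theta_n}-S_{\theta_{n-1}})(f^a-E_{n-1})-S_{\theta_n}e_{n-1},\qquad
g_n=-(S_{\theta_n}-S_{\theta_{n-1}})\widetilde{E}_{n-1}-S_{\theta_n}\tilde{e}_{n-1},
\]
while $f_0=S_{\theta_0}f^a$ and $g_0=0$. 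The case $n=0$ is immediate: $g_0=0$, and since $\theta_0$ is fixed, $\|S_{\theta_0}f^a\|_{H^s(\Omega_T)}\leq C(\theta_0,s)\|f^a\|_{H^{\alpha+1}(\Omega_T)}$ by \eqref{72}, which sits under the right-hand side of \eqref{105} once $\|f^a\|_{H^{\alpha+1}(\Omega_T)}$ — hence $T$, by \eqref{68} — is small enough. So the content is the step $n\geq 1$, and I would estimate the ``smoothing increment'' terms and the ``single smoothed error'' terms separately.

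For the smoothing increments I would write $S_{\theta_n}-S_{\theta_{n-1}}=\int_{\theta_{n-1}}^{\theta_n}\frac{d}{d\theta}S_\theta\,d\theta$ and apply \eqref{74}. Since $f^a\in H^{\alpha+1}(\Omega_T)$, choosing the smoothing exponent equal to $\alpha+1$ gives $\bigl\|\frac{d}{d\theta}S_\theta f^a\bigr\|_{H^s(\Omega_T)}\leq C\theta^{s-\alpha-2}\|f^a\|_{H^{\alpha+1}(\Omega_T)}$, and integrating over an interval of length $\leq C\Delta_n$ contained in $[\theta_{n-1},\theta_n]$ with $\theta_{n-1}\sim\theta_n$ yields $\|(S_{\theta_n}-S_{\theta_{n-1}})f^a\|_{H^s(\Omega_T)}\leq C\Delta_n\theta_n^{s-\alpha-2}\|f^a\|_{H^{\alpha+1}(\Omega_T)}$. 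For $E_{n-1}$ and $\widetilde{E}_{n-1}$ I would instead invoke Lemma~\ref{l9}, $\|E_{n-1}\|_{H^{\alpha+2}(\Omega_T)}+\|\widetilde{E}_{n-1}\|_{H^{\alpha+2}(\partial\Omega_T)}\leq C\delta^2\theta_{n-1}\leq C\delta^2\theta_n$, and apply \eqref{74} with smoothing exponent $\alpha+2$, which gives $\|(S_{\theta_n}-S_{\theta_{n-1}})E_{n-1}\|_{H^s(\Omega_T)}\leq C\Delta_n\theta_n^{s-\alpha-3}\cdot\delta^2\theta_n=C\delta^2\Delta_n\theta_n^{s-\alpha-2}$, and likewise on the boundary. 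These two pieces produce the term $C\Delta_n\theta_n^{s-\alpha-2}\bigl(\|f^a\|_{H^{\alpha+1}}+\delta^2\bigr)$ in \eqref{105} and the $\theta_n^{s-\alpha-2}$–part of \eqref{106}. This is exactly where the bound of Lemma~\ref{l9} in the norm $H^{\alpha+2}$ is used, which forces $\alpha+2\leq\tilde{\alpha}-2$.

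For the single smoothed errors $S_{\theta_n}e_{n-1}$, $S_{\theta_n}\tilde{e}_{n-1}$ I would use \eqref{72} together with Lemma~\ref{l8}. When $s\in[3,\tilde{\alpha}-2]$ one takes the embedding exponent equal to $s$, so $\|S_{\theta_n}e_{n-1}\|_{H^s(\Omega_T)}\leq C\|e_{n-1}\|_{H^s(\Omega_T)}\leq C\delta^2\theta_{n-1}^{L(s)-1}\Delta_{n-1}\leq C\delta^2\theta_n^{L(s)-1}\Delta_n$, using that $\theta_{n-1}\sim\theta_n$ and $\Delta_{n-1}\sim\Delta_n$. When $s\in\{\tilde{\alpha}-1,\tilde{\alpha},\tilde{\alpha}+1\}$, which lies outside the range of Lemma~\ref{l8}, one estimates through the exponent $\tilde{\alpha}-2$, paying a factor $\theta_n^{s-\tilde{\alpha}+2}$: $\|S_{\theta_n}e_{n-1}\|_{H^s(\Omega_T)}\leq C\theta_n^{s-\tilde{\alpha}+2}\|e_{n-1}\|_{H^{\tilde{\alpha}-2}(\Omega_T)}\leq C\delta^2\theta_n^{s-\tilde{\alpha}+2+L(\tilde{\alpha}-2)-1}\Delta_n$. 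Here one checks that for $\alpha\geq 7$ the exponent $L$ is affine with slope one on the relevant range (indeed $L(s)=s+6-2\alpha$ for $s\geq\alpha-1$, and $\tilde{\alpha}-2\geq\alpha-1$), whence $s-\tilde{\alpha}+2+L(\tilde{\alpha}-2)=L(s)$ and the loss is exactly compensated, giving again $C\delta^2\theta_n^{L(s)-1}\Delta_n$. The same computation on $\partial\Omega_T$ handles $S_{\theta_n}\tilde{e}_{n-1}$. Adding the contributions yields \eqref{105} and \eqref{106}.

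The main obstacle is the exponent bookkeeping carried out uniformly in $s\in[3,\tilde{\alpha}+1]$: one must simultaneously check that every chosen smoothing/embedding exponent is admissible (so that Lemmas~\ref{l8} and \ref{l9} apply with the regularities at hand, which is what pins down the relation $\alpha+2\leq\tilde\alpha-2$), and that each power of $\theta_n$ that appears actually fits under $\theta_n^{s-\alpha-2}$ or $\theta_n^{L(s)-1}$. The delicate point is the ``overflow'' range $s\in(\tilde{\alpha}-2,\tilde{\alpha}+1]$, where Lemma~\ref{l8} is not directly available and one must rely on the linear growth of $L$ to absorb the derivatives lost through $S_{\theta_n}$; this is also the place that makes the hypothesis $\alpha\geq 7$ necessary.
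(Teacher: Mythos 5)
Your proof is correct and follows essentially the same route as the paper's: the same telescoping identity for $f_n$ and $g_n$ from \eqref{79}, the smoothing-increment bounds via \eqref{74} applied to $f^a\in H^{\alpha+1}$ and to $E_{n-1},\widetilde{E}_{n-1}$ through Lemma \ref{l9}, the bound on $S_{\theta_n}e_{n-1}$, $S_{\theta_n}\tilde{e}_{n-1}$ via \eqref{72} and Lemma \ref{l8}, and the equivalences $\theta_{n-1}\sim\theta_n$, $\Delta_{n-1}\sim\Delta_n$. Your explicit handling of the range $s\in(\tilde{\alpha}-2,\tilde{\alpha}+1]$ (estimating through the exponent $\tilde{\alpha}-2$ and using that $L$ has slope one there) and of the case $n=0$ merely spells out steps the paper leaves implicit.
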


\begin{proof} It follows from (\ref{79}) that
\[
{f}_n =(S_{\theta_n}-S_{\theta_{n-1}}){f}^a-(S_{\theta_n}-S_{\theta_{n-1}}){E}_{n-1}-
S_{\theta_n}{e}_{n-1}.
\]
Using (\ref{72}), (\ref{74}), (\ref{103}), and (\ref{104}), we obtain the estimates
\[
\|(S_{\theta_n}-S_{\theta_{n-1}}){f}^a\|_{H^{s}(\Omega_T)}\leq C\theta_{n-1}^{s-\alpha -2}
\|{f}^a\|_{H^{\alpha +1}(\Omega_T)}\Delta_{n-1},
\]
\[
\|(S_{\theta_n}-S_{\theta_{n-1}}){E}_{n-1}\||_{H^{s}(\Omega_T)}\leq C\theta_{n-1}^{s-\alpha -3}\|{E}_{n-1}
\|_{H^{\alpha +2}(\Omega_T)}\Delta_{n-1}\leq C\delta^2\theta_{n-1}^{s-\alpha -2}\Delta_{n-1},
\]
\[
\|S_{\theta_n}{e}_{n-1}\|_{H^{s}(\Omega_T)}\leq C\delta^2\theta_n^{L(s)-1}\Delta_{n-1}.
\]
Using the inequalities $\theta_{n-1}\leq \theta_n\leq \sqrt{2}\theta_{n-1}$, $\theta_{n-1}\leq 3\theta_n$,
and $\Delta_{n-1}\leq 3\Delta_n$, from the above estimates we deduce (\ref{105}). Similarly, we get (\ref{106}).
\end{proof}

We are now in a position to obtain the estimate of the solution to problem (\ref{75}) by employing the tame estimate
(\ref{38}). Then the estimate of $(\delta U_n,\delta\varphi_n)$ follows from formula (\ref{76}).

\begin{lemma}
Let $\alpha \geq 7$.  There exist $\delta >0$, $T>0$ sufficiently small, and $\theta_0 \geq 1$ sufficiently large, such that for all integer $s\in [3,\widetilde{\alpha}]$, one has
\begin{equation}
\|\delta U_n\|_{H^{s}(\Omega_T)}+\|\delta \varphi_n\|_{H^{s}(\partial\Omega_T)}\leq  \delta\theta_n^{s-\alpha -1}\Delta_n.
\label{107}
\end{equation}
\label{l11}
\end{lemma}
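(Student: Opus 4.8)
The plan is to run the linear problem \eqref{75} through the tame estimate \eqref{38} of Theorem \ref{t4}, insert the bounds for $(f_n,g_n)$ and for the coefficients established in Lemmas \ref{l4}--\ref{l10} and Proposition \ref{p3}, and then check by a direct comparison of exponents that, for $\alpha\geq 7$, every term produced is dominated by $\delta\theta_n^{s-\alpha-1}\Delta_n$ after $\delta$ and $T$ are taken small and $\theta_0$ large. First I would verify that \eqref{38} applies to \eqref{75} with basic state $(\widehat{U},\hat{\varphi})=(U^a+U_{n+1/2},\varphi^a+\varphi_{n+1/2})$: Proposition \ref{p3} furnishes the constraint \eqref{20} and, for $T$ small, the strict inequalities \eqref{19} and \eqref{21}, while \eqref{18} is automatic since the state lies in $H^6(\Omega_T)\hookrightarrow W^2_\infty(\Omega_T)$. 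The smallness requirement \eqref{37} is the only point deserving a comment: because $U^a$ is supported in $t\in[-T,T]$, estimate \eqref{66} gives $\|U^a\|_{H^6(\Omega_T)}\to 0$ as $T\to 0$, and $\|U_{n+1/2}\|_{H^6(\Omega_T)}\leq C\delta$ follows from \eqref{93}, \eqref{94} and Lemma \ref{l3} (here $\alpha\geq 7>6$), so $\widehat{K}\leq K_0$ for $\delta,T$ small. Finally $U^a\in H^{\tilde\alpha+4}$ and $U_{n+1/2}\in H^{\tilde\alpha+3}$ by \eqref{94} and Lemma \ref{l3}, so the coefficients of \eqref{75} belong to $H^{\tilde\alpha+3}$ and \eqref{38} is available for every integer $s\in[3,\tilde\alpha]$. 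Since the data of \eqref{75} vanish in the past, so does $\delta\dot{U}_n$, and hence also $(\delta U_n,\delta\varphi_n)$ through \eqref{76}.

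Next I would substitute Lemma \ref{l10} for $f_n,g_n$, using $\|f^a\|_{H^{\alpha+1}(\Omega_T)}\leq\|f^a\|_{H^{m+7}(\Omega_T)}\leq\delta_0(T)$ from \eqref{68} (recall $\alpha+1=m+2\leq m+7$), and bound the coefficient factor in \eqref{38} by $\|\widehat{U}\|_{H^{s+3}(\Omega_T)}+\|\hat{\varphi}\|_{H^{s+3}(\partial\Omega_T)}\leq C_*+C\delta\theta_n^{(s+4-\alpha)_+}$ via \eqref{66}, \eqref{94} and Lemma \ref{l3}. This reduces \eqref{107} for the good unknown to checking that, for $s\in[3,\tilde\alpha]$, the exponents $s-\alpha-2$, $L(s)-1$, $L(s+1)-1$ coming from $f_n,g_n$, and the exponents $1-\alpha$ and $1-\alpha+(s+4-\alpha)_+$ coming from the coefficient factor, are all at most $s-\alpha-1$; reading off the definition of $L$ in Lemma \ref{l7}, all of these inequalities hold once $\alpha\geq 7$ (with $s\geq 3$), which is precisely the source of the hypothesis $m\geq 6$ in Theorem \ref{t2}. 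Collecting terms then gives
\[
\|\delta\dot{U}_n\|_{H^s(\Omega_T)}+\|\delta\varphi_n\|_{H^s(\partial\Omega_T)}\leq C(K_0)(1+C_*)\bigl(\delta_0(T)+\delta^2\bigr)\,\theta_n^{s-\alpha-1}\Delta_n ,
\]
and, choosing first $\delta$ small in terms of $C(K_0)$ and $C_*$, then $\theta_0$ large, then $T$ small enough that $\delta_0(T)\leq\delta^2$, the prefactor can be made as small as needed.

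To finish I would pass back to $\delta U_n$ via the good-unknown formula \eqref{76}, $\delta U_n=\delta\dot{U}_n+\frac{\delta\Psi_n}{\partial_1(\Phi^a+\Psi_{n+1/2})}\,\partial_1(\check{U}+U^a+U_{n+1/2})$. This substitution loses no derivatives: one estimates the product by the Moser inequality \eqref{40}, using $\|\delta\Psi_n\|_{H^s(\Omega_T)}\leq C\|\delta\varphi_n\|_{H^s(\partial\Omega_T)}$ together with \eqref{66}, \eqref{94} and Lemma \ref{l3} for the coefficients, the only care being that $\partial_1\check{U}=(2\epsilon,0,0,0,0)$ must be handled exactly as in Remark \ref{rcheck}, by splitting off the constant through $\frac{2\epsilon}{\partial_1(\Phi^a+\Psi_{n+1/2})}=2\epsilon-2\epsilon\,\frac{\partial_1(\Psi^a+\Psi_{n+1/2})}{\partial_1(\Phi^a+\Psi_{n+1/2})}$ with the second term in $H^s(\Omega_T)$. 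The outcome is that $\|\delta U_n\|_{H^s(\Omega_T)}$ is bounded by a fixed multiple of $\|\delta\dot{U}_n\|_{H^s(\Omega_T)}+\|\delta\varphi_n\|_{H^s(\partial\Omega_T)}$ plus terms of the same shape but smaller (these are controlled by the same exponent count together with the $H^3$ part of the bound above); since the prefactor $C(K_0)(1+C_*)(\delta_0(T)+\delta^2)$ of the previous display is at our disposal, we may make it smaller than $\delta/(2(C+1))$ with $C$ the fixed constant of this last step, and then $\|\delta U_n\|_{H^s(\Omega_T)}+\|\delta\varphi_n\|_{H^s(\partial\Omega_T)}\leq\delta\theta_n^{s-\alpha-1}\Delta_n$, which is \eqref{107}.

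The step I expect to be the main obstacle is not any individual estimate but this exponent bookkeeping: one has to follow the worst loss through each of Lemmas \ref{l4}--\ref{l7} (hence through $L_1,L_2,L_3,L$), through the modified-state estimate \eqref{94}, through the summation over $k$ in \eqref{79} (Lemma \ref{l10}), and through the coefficient factor of the tame estimate, and confirm that the total stays below $\theta_n^{s-\alpha-1}$. It is this chain of inequalities that fixes $\alpha=m+1\geq 7$ and $\tilde\alpha=\alpha+4$, and hence the regularity index $m+7$ for the initial data in Theorem \ref{t2}; everything else is a routine application of \eqref{38}, the Moser-type inequalities, and the freedom to shrink $\delta$ and $T$.
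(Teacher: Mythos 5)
Your proposal is correct and follows essentially the same route as the paper: apply the tame estimate \eqref{38} to problem \eqref{75}, insert the bounds of Lemma \ref{l10} for $(f_n,g_n)$ together with \eqref{66}, \eqref{83}, \eqref{94} for the coefficients, return from the good unknown to $\delta U_n$ via \eqref{76} and Moser-type inequalities, and close with the exponent inequalities \eqref{111} valid for $\alpha\geq 7$ after shrinking $\delta$, $T$ and taking $\theta_0$ large. The only small deviation is your claim that $\|U^a\|_{H^6(\Omega_T)}\to 0$ as $T\to 0$ (it does not, since the traces at $t=0$ are fixed); the paper instead verifies \eqref{37} by simply fixing $K_0=2C_*$, which is also available to you and does not change anything else in your argument.
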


\begin{proof}
Without loss of generality we can take the constant $K_0$ appearing in estimate (\ref{38}) that $K_0=2C_*$, where
$C_*$ is the constant from (\ref{66}). In order to apply Theorem \ref{t4}, by using (\ref{83}) and (\ref{94}), we check that
\[
\|{U}^a+{U}_{n+1/2}\|_{H^{6}(\Omega_T)}+\|\varphi^a+S_{\theta_n}\varphi_n\|_{H^{6}(\partial\Omega_T)}\leq 2C_*
\]
for $\alpha \geq 7$ and $\delta$ small enough. That is, assumption (\ref{37}) is satisfied for the coefficients of problem (\ref{75}). By applying the tame estimate (\ref{38}), for $T$ small enough one has
\begin{equation}
\begin{array}{l}
{\displaystyle
\|\delta\dot{{U}}_n\|_{H^s(\Omega_T)}+\|\delta \varphi_n\|_{H^{s}(\partial\Omega_T)}\leq C\Bigl\{
\|{f}_n\|_{H^{s}(\Omega_T)}+ \|{g}_n\|_{H^{s+1}(\partial\Omega_T)} }\\[9pt]
\qquad +\bigl( \|{f}_n\|_{H^{3}(\Omega_T)}+ \|{g}_n\|_{H^{4}(\partial\Omega_T)} \bigr)\bigl(
\|{U}^a +{U}_{n+1/2}\|_{H^{s+3}(\Omega_T)}+\|\varphi^a +S_{\theta_n}\varphi_n\|_{H^{s+3}(\partial\Omega_T)}\bigr)\Bigr\}.
\end{array}
\label{108}
\end{equation}

Using Moser-type inequalities, from formula (\ref{76}) we obtain
\[
\|\delta{U}_n\|_{H^s(\Omega_T)}\leq  \|\delta\dot{{U}}_n\|_{H^s(\Omega_T)} +C\bigl\{\|\delta \varphi_n\|_{H^{s}(\partial\Omega_T)}
+ \|\delta \varphi_n\|_{H^{3}(\partial\Omega_T)}\|\varphi^a +S_{\theta_n}\varphi_n\|_{H^{s}(\partial\Omega_T)}
\bigr\}.
\]
Then (\ref{108}) yields
\begin{equation}
\begin{array}{l}
{\displaystyle
\|\delta{{U}}_n\|_{H^s(\Omega_T)}+\|\delta \varphi_n\|_{H^{s}(\partial\Omega_T)}\leq C\Bigl\{
\|{f}_n\|_{H^{s}(\Omega_T)}+ \|{g}_n\|_{H^{s+1}(\partial\Omega_T)} }\\[9pt]
\qquad +\bigl( \|{f}_n\|_{H^{3}(\Omega_T)}+ \|{g}_n\|_{H^{4}(\partial\Omega_T)} \bigr)\bigl(
\|{U}^a +{U}_{n+1/2}\|_{H^{s+3}(\Omega_T)}+\|\varphi^a +S_{\theta_n}\varphi_n\|_{H^{s+3}(\partial\Omega_T)}\bigr)\Bigr\}
\end{array}
\label{109}
\end{equation}
for all integer $s\in [6,\widetilde{\alpha}]$. Below we can actually use a roughened version of \eqref{109} (see Remark
\ref{rough}). Applying Lemma \ref{l11}, (\ref{83}), and Proposition \ref{p3},
from (\ref{109}) we derive the estimate
\begin{equation}
\begin{array}{l}
\|\delta{{U}}_n\|_{H^s(\Omega_T)}+\|\delta \varphi_n\|_{H^{s}(\partial\Omega_T)}\leq C
\bigl\{ \theta_n^{s-\alpha -1}\left( \|{f}^a\|_{H^{\alpha+1}(\Omega_T)}+\delta^2\right)+\delta^2\theta_n^{L(s+1)-1} \bigr\}\Delta_n\\[9pt]
\qquad  +C\delta\Delta_n\bigr\{\theta_n^{2-\alpha}\left( \|{f}^a
\|_{H^{\alpha+1}(\Omega_T)}
+\delta^2\right)
+\delta^2\theta_n^{9-2\alpha}\bigr\}\bigl\{ C_*+\theta_n^{(s+3-\alpha )_+}+\theta_n^{s+4-\alpha}\bigr\}.
\end{array}
\label{110}
\end{equation}
Exactly as in \cite{CS}, we can check that
the inequalities
\begin{equation}
\begin{array}{l}
L(s+1)\leq s-\alpha,\quad (s +3-\alpha)_+ +2-\alpha \leq s-\alpha -1,\\[3pt]
(s +3-\alpha)_+ +9-2\alpha \leq s-\alpha -1,\\[3pt]
s+6-2\alpha \leq s-\alpha -1,\quad s+13-3\alpha \leq s-\alpha -1
\end{array}
\label{111}
\end{equation}
hold for $\alpha\geq 7$ and $s\in [3,\tilde{\alpha}]$.
Thus, (\ref{110}) and (\ref{68}) yield
\[
\|\delta{{U}}_n\|_{H^s(\Omega_T)}+\|\delta \varphi_n\|_{H^{s}(\partial\Omega_T)}\leq C\left( \delta_0(T)+\delta^2\right)
\theta_n^{s-\alpha -1}\Delta_n \leq \delta\theta_n^{s-\alpha -1}\Delta_n
\]
for $\delta$ and $T$ small enough. 
\end{proof}

\begin{remark}{\rm As we can see, Lemma \ref{l11} with $\tilde{\alpha}=\alpha+4$ is absolutely analogous to Lemma 16 in \cite{CS}. In this sense,
the ``gain of one derivative for the front'' in the tame estimate gives no advantage in the realization of the Nash-Moser method. This is caused by the fact that even if in point a) of ($H_{n-1}$) we had the $H^{s+1}$--norm
of $\delta\varphi_k$ we could never use this advantage before the proof of Lemma \ref{l11}.
}
\end{remark}

Inequality (\ref{107}) is point $a)$ of ($H_n$). It remains to prove points $b)$ and $c)$ of ($H_n$).

\begin{lemma}
Let $\alpha \geq 7$.  There exist $\delta >0$, $T>0$ sufficiently small, and $\theta_0 \geq 1$ sufficiently large, such that for all integer $s\in [3,\widetilde{\alpha}-2]$
\begin{equation}
\|{\cal L}({U}_n,{\Psi}_n)-{f}^a\|_{H^s(\Omega_T)}\leq 2\delta\theta_n^{s-\alpha -1}.
\label{193}
\end{equation}
Moreover, for all integer $s\in [4,{\alpha}]$ one has
\begin{equation}
\|{\cal B}({U}_n|_{x_1=0},\varphi _n)\|_{H^s(\partial\Omega_T)}\leq \delta\theta_n^{s-\alpha -1}.
\label{194}
\end{equation}
\label{l12}
\end{lemma}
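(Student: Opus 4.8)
The plan is to establish \eqref{193} and \eqref{194} by the usual telescoping argument of \cite{CS,Tr}: sum the error decompositions of the scheme, use \eqref{79} to eliminate the partial sums $\sum_{k<n}{f}_k$ and $\sum_{k<n}{g}_k$, and then bound what is left by Lemmas \ref{l8} and \ref{l9}, the smoothing estimates \eqref{72}--\eqref{73}, and \eqref{68}. First I would note that the chains of decompositions written just before \eqref{77}, together with the fact that $(\delta\dot{U}_k,\delta\varphi_k)$ solves \eqref{75}, give for each $k=0,\dots,n-1$
\[
{\cal L}(U_{k+1},{\Psi}_{k+1})-{\cal L}(U_{k},{\Psi}_{k})={f}_k+{e}_k,\qquad
{\cal B}(U_{k+1}|_{x_1=0},\varphi_{k+1})-{\cal B}(U_{k}|_{x_1=0},\varphi_{k})={g}_k+\tilde{e}_k .
\]
Summing over $k$, using that the iteration starts from $U_0=0$, $\varphi_0=0$ so that ${\cal L}(U_0,{\Psi}_0)=0$ and ${\cal B}(U_0|_{x_1=0},\varphi_0)=\mathbb{B}(U^a|_{x_1=0},\varphi^a)=0$ by Lemma \ref{l2}, and recalling \eqref{78}, one gets ${\cal L}(U_n,{\Psi}_n)=\sum_{k=0}^{n-1}{f}_k+{E}_n$ and ${\cal B}(U_n|_{x_1=0},\varphi_n)=\sum_{k=0}^{n-1}{g}_k+\widetilde{E}_n$.

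Next I would substitute \eqref{79} at index $n-1$, i.e. $\sum_{k=0}^{n-1}{f}_k=S_{\theta_{n-1}}({f}^a-{E}_{n-1})$ and $\sum_{k=0}^{n-1}{g}_k=-S_{\theta_{n-1}}\widetilde{E}_{n-1}$, and write ${E}_n-S_{\theta_{n-1}}{E}_{n-1}={e}_{n-1}+(I-S_{\theta_{n-1}}){E}_{n-1}$ (and similarly for $\widetilde{E}$), arriving at the working identities
\[
{\cal L}(U_n,{\Psi}_n)-{f}^a=(S_{\theta_{n-1}}-I){f}^a+{e}_{n-1}+(I-S_{\theta_{n-1}}){E}_{n-1},
\]
\[
{\cal B}(U_n|_{x_1=0},\varphi_n)=\tilde{e}_{n-1}+(I-S_{\theta_{n-1}})\widetilde{E}_{n-1}.
\]
It then remains to estimate three terms for \eqref{193} and two for \eqref{194}. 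For $(S_{\theta_{n-1}}-I){f}^a$ I would use \eqref{73} with $\|{f}^a\|_{H^{m+7}(\Omega_T)}\le\delta_0(T)$ from \eqref{68} (recall $m+7=\alpha+6$), obtaining $C\delta_0(T)\theta_{n-1}^{\,s-\alpha-6}\le C\delta_0(T)\theta_n^{\,s-\alpha-1}$ since $s-\alpha-6\le s-\alpha-1$ and $\theta_{n-1}\le\theta_n$. For ${e}_{n-1}$ I would use Lemma \ref{l8} and $\Delta_{n-1}\le\tfrac{1}{2}\theta_{n-1}^{-1}$ to get $\|{e}_{n-1}\|_{H^s(\Omega_T)}\le C\delta^2\theta_{n-1}^{\,L(s)-2}$, and check that $L(s)\le s-\alpha+1$ for $\alpha\ge7$ and $s\in[3,\tilde\alpha-2]$. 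For $(I-S_{\theta_{n-1}}){E}_{n-1}$ I would use \eqref{73} and the top-order bound $\|{E}_{n-1}\|_{H^{\alpha+2}(\Omega_T)}\le C\delta^2\theta_{n-1}$ of Lemma \ref{l9}, getting $C\theta_{n-1}^{\,s-\alpha-2}\|{E}_{n-1}\|_{H^{\alpha+2}(\Omega_T)}\le C\delta^2\theta_{n-1}^{\,s-\alpha-1}$, which is why we need $s\le\tilde\alpha-2=\alpha+2$. Adding the three pieces gives $\|{\cal L}(U_n,{\Psi}_n)-{f}^a\|_{H^s(\Omega_T)}\le C(\delta_0(T)+\delta^2)\theta_n^{\,s-\alpha-1}\le2\delta\theta_n^{\,s-\alpha-1}$ for $\delta$ and $T$ small, i.e. \eqref{193}. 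For \eqref{194} the same recipe applies: Lemma \ref{l8} bounds $\tilde{e}_{n-1}$ directly in $H^s(\partial\Omega_T)$, and \eqref{73} on the boundary with Lemma \ref{l9} bounds $(I-S_{\theta_{n-1}})\widetilde{E}_{n-1}$; no ${f}^a$-type term occurs because ${g}_0=0$, so one ends up with $\|{\cal B}(U_n|_{x_1=0},\varphi_n)\|_{H^s(\partial\Omega_T)}\le C\delta^2\theta_n^{\,s-\alpha-1}\le\delta\theta_n^{\,s-\alpha-1}$ for $\delta$ small and $s\in[4,\alpha]\subset[3,\tilde\alpha-2]$.

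The only delicate point is the exponent bookkeeping — verifying the inequality $L(s)\le s-\alpha+1$ and making sure $s$ never leaves the window in which the accumulated-error control of Lemma \ref{l9} (which itself forces $\alpha\ge7$ and the choice $\tilde\alpha=\alpha+4$) is available. There is no conceptual obstacle: this is exactly the routine step that closes the induction $(H_{n-1})\Rightarrow(H_n)$, carried out as in \cite{CS,Tr}, and afterwards one only has to verify the base case $(H_0)$ separately.
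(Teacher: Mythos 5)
Your proposal is correct and follows essentially the same route as the paper: the same decompositions ${\cal L}(U_n,\Psi_n)-f^a=(S_{\theta_{n-1}}-I)f^a+(I-S_{\theta_{n-1}})E_{n-1}+e_{n-1}$ and ${\cal B}(U_n|_{x_1=0},\varphi_n)=(I-S_{\theta_{n-1}})\widetilde{E}_{n-1}+\tilde{e}_{n-1}$ (which you derive by telescoping and \eqref{79}, while the paper simply states them), followed by the same three estimates via \eqref{72}--\eqref{73}, \eqref{68}, Lemma \ref{l8} with $L(s)\le s-\alpha+1$, and Lemma \ref{l9}. The only cosmetic difference is that you bound $(S_{\theta_{n-1}}-I)f^a$ in one stroke using the $H^{m+7}$ norm of $f^a$, whereas the paper splits into the ranges $s\le\alpha+1$ and $s\ge\alpha+1$ using the $H^{\alpha+1}$ norm; both are valid.
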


\begin{proof}
One can show that
\begin{equation}
{\cal L}({U}_n,{\Psi}_n)-{f}^a=(S_{\theta_{n-1}}-I){f}^a+(I- S_{\theta_{n-1}}){E}_{n-1}+{e}_{n-1}.\label{195}
\end{equation}
For $s\in [\alpha +1,\tilde{\alpha}-2]$, by using (\ref{72}), we obtain
\[
\|(I- S_{\theta_{n-1}}){f}^a\|_{H^s(\Omega_T)}\leq \theta_n^{s-\alpha -1}(C\|{f}^a\|_{H^{\alpha+1}(\Omega_T)} +\|{f}^a\|_{H^s(\Omega_T)})\leq
C\delta_0(T)\theta_n^{s-\alpha -1},
\]
while for $s\in [3,\alpha +1]$, applying (\ref{73}), we get
\[
\|(I- S_{\theta_{n-1}}){f}^a\|_{H^s(\Omega_T)}\leq C\theta_{n-1}^{s-\alpha -1}\|{f}^a\|_{H^{\alpha +1}(\Omega_T)}\leq
C\delta_0(T)\theta_n^{s-\alpha -1}.
\]
Lemma \ref{l9} and (\ref{73}) imply 
\[
\|(I- S_{\theta_{n-1}}){E}_{n-1}\|_{H^s(\Omega_T)}\leq C\theta_{n-1}^{s-\alpha -2}\|{E}_{n-1}\|_{H^{\alpha+2}(\Omega_T)}\leq
 C\delta^2\theta_{n}^{s-\alpha -1}
\]
for $3\leq s\leq \alpha+2 =\tilde{\alpha}-2$
It follows from (\ref{103}) that
\[
\|{e}_{n-1}\|_{H^s(\Omega_T)}\leq C\delta^2\theta_{n-1}^{L(s)-1}\Delta_{n-1}\leq C\delta^2\theta_{n}^{L(s)-2}\leq C\delta^2\theta_{n}^{s-\alpha -1}.
\]
From the above estimates and decomposition (\ref{195}), by choosing $T>0$ and $\delta>0$ sufficiently small, we obtain (\ref{193}). Similarly, by using the decomposition
\[
{\cal B}({U}_n|_{x_1=0},\varphi_n) =(I-S_{\theta_{n-1}})\widetilde{E}_{n-1}+\tilde{e}_{n-1},
\]
we can prove estimate (\ref{194}).
\end{proof}

As follows from Lemmas \ref{l11} and \ref{l12}, we have proved that $(H_{n-1})$ implies $(H_{n})$, provided that
$\alpha \geq 7$, $\tilde{\alpha}=\alpha +4$, the constant $\theta_0\geq 1$ is large enough, and $T>0$, $\delta >0$ are small enough. Fixing now the constants $\alpha$, $\delta$, and $\theta_0$, we prove $(H_0)$.

\begin{lemma}
If the time $T>0$ is sufficiently small, then $(H_0)$ is true.
\label{l13}
\end{lemma}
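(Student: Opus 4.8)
The plan is to cash in on the trivial initialization $U_0=0$, $\varphi_0=0$: two of the three parts of $(H_0)$ then hold essentially for free, and the remaining one follows from the tame estimate of Theorem \ref{t4} applied to the linear problem \eqref{75} with $n=0$, together with the smallness \eqref{68} of $f^a$.

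First I would unwind the $n=0$ step. Since $U_0=0$ and $\varphi_0=0$, Proposition \ref{p3} gives $\varphi_{1/2}=S_{\theta_0}\varphi_0=0$, $p_{1/2}=0$, $v_{j,1/2}=0$ for $j=2,3$ and $S_{1/2}=0$; moreover the function ${\cal G}$ entering the definition of $v_{1,1/2}$ vanishes, so $v_{1,1/2}={\cal R}_T{\cal G}=0$ and hence $U_{1/2}=0$. Thus in \eqref{75} with $n=0$ the basic state is exactly $(U^a,\varphi^a)$, and the source terms are $f_0=S_{\theta_0}f^a$, $g_0=0$. By Lemma \ref{l2} the state $(U^a,\varphi^a)$ satisfies \eqref{6} on $\overline{\Omega_T}$, the first boundary condition \eqref{20} (which is a part of \eqref{14}), and \eqref{16'}, while \eqref{66} gives \eqref{37} with, say, $\widehat K=C_*\le K_0=2C_*$ together with the $H^{s+3}$--regularity required by Theorem \ref{t4} for $s\le\tilde\alpha$. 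Points $b)$ and $c)$ are then immediate: since $U_0=\Psi_0=0$ we have ${\cal L}(U_0,\Psi_0)=\mathbb{L}(U^a,\Psi^a)-\mathbb{L}(U^a,\Psi^a)=0$, so $\|{\cal L}(U_0,\Psi_0)-f^a\|_{H^s(\Omega_T)}=\|f^a\|_{H^s(\Omega_T)}\le\delta_0(T)$ for $s\le\tilde\alpha-2$ by \eqref{68}, which is $\le 2\delta\theta_0^{s-\alpha-1}$ once $T$ is small (recall $\alpha$, $\delta$, $\theta_0$ are already fixed, so $2\delta\theta_0^{2-\alpha}$ is a fixed positive lower bound for the right-hand side over $s\ge3$); and ${\cal B}(U_0|_{x_1=0},\varphi_0)=\mathbb{B}(U^a|_{x_1=0},\varphi^a)=0$ because $(U^a,\varphi^a)$ satisfies \eqref{14} (Lemma \ref{l2}).

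For point $a)$ I would apply the tame estimate \eqref{38} to problem \eqref{75} with $n=0$. With $g_0=0$, for $s\in[3,\tilde\alpha]$ it gives
\[
\|\delta\dot U_0\|_{H^s(\Omega_T)}+\|\delta\varphi_0\|_{H^s(\partial\Omega_T)}\le C(K_0)\Bigl\{\|S_{\theta_0}f^a\|_{H^s(\Omega_T)}+\|S_{\theta_0}f^a\|_{H^3(\Omega_T)}\bigl(\|U^a\|_{H^{s+3}(\Omega_T)}+\|\varphi^a\|_{H^{s+3}(\partial\Omega_T)}\bigr)\Bigr\},
\]
and by \eqref{72} (used with exponent $0$, which is legitimate since $s\le\tilde\alpha<m+7$), \eqref{68}, and \eqref{66}, the right-hand side is $\le C\delta_0(T)$ with $C$ independent of $T$. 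Passing to the original difference through \eqref{76}, which for $n=0$ reads $\delta U_0=\delta\dot U_0+\frac{\delta\Psi_0}{\partial_1\Phi^a}\,\partial_1(\check U+U^a)$, and using that $\delta\Psi_0=\chi(x_1)\delta\varphi_0$ has compact support in $x_1$ (so the constant vector $\partial_1\check U$ is harmless), the Moser inequalities \eqref{40}, and \eqref{66}, I obtain $\|\delta U_0\|_{H^s(\Omega_T)}+\|\delta\varphi_0\|_{H^s(\partial\Omega_T)}\le C'\delta_0(T)$ for all $s\in[3,\tilde\alpha]$ with $C'$ a fixed constant. Since $\theta_0\ge1$, the target $\delta\theta_0^{s-\alpha-1}\Delta_0$ is minimized over $s\in[3,\tilde\alpha]$ at $s=3$, where it equals the fixed positive number $\delta\theta_0^{2-\alpha}\Delta_0$; choosing $T$ so small that $C'\delta_0(T)\le\delta\theta_0^{2-\alpha}\Delta_0$ then yields $a)$. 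Finally $(\delta U_0,\delta\varphi_0)$ vanishes in the past since it solves \eqref{75}, so the same holds for $(U_1,\varphi_1)$.

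The argument is essentially routine bookkeeping; the only point needing care is the logical ordering --- the constants $\alpha$, $\delta$, $\theta_0$ having been fixed (in Lemmas \ref{l11}--\ref{l12}) before $T$, the quantities $\Delta_0$, $\theta_0^{2-\alpha}$ and the like are fixed positive numbers, so the whole matter reduces to the fact that $\delta_0(T)\to0$ as $T\to0$ (which itself relies on the presence of gravity, Remark \ref{rcheck}) followed by taking $T$ sufficiently small.
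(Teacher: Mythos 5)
Your argument is correct and follows essentially the same route as the paper's proof: observe that $U_0=\varphi_0=0$ forces $(U_{1/2},\varphi_{1/2})=0$, so the $n=0$ linear problem has basic state $(U^a,\varphi^a)$ and data $(S_{\theta_0}f^a,0)$; then apply the tame estimate \eqref{38}, pass from $\delta\dot U_0$ to $\delta U_0$ via \eqref{76}, and absorb the resulting $C'\delta_0(T)$ by shrinking $T$, with $b)$ and $c)$ following immediately from ${\cal L}(0,0)=0$, ${\cal B}(0,0)=\mathbb{B}(U^a,\varphi^a)=0$ and \eqref{68}. You spell out points $b)$ and $c)$ and the handling of the $\partial_1\check U$ term more explicitly than the paper (which merely says "Likewise"), but the substance is identical.
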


\begin{proof}
We recall that $({U}_0,f_0)=0$. Then, by the definition of the approximate solution in Lemma \ref{l2} the state $({U}^a+{U}_0,\varphi^a+\varphi_0)=0$ satisfies already (\ref{6}), (\ref{14}), and (\ref{16'}). That is, it follows from the construction of Proposition \ref{p3} that
$({U}_{1/2},\varphi_{1/2})=0$. Consequently, $(\delta\dot{{U}}_0,\delta \varphi_0)$ solves the linear problem (\ref{26})--(\ref{28}) with the coefficients $(\widehat{{U}},\hat{\varphi})=({U}^a, \varphi^a)$ and the source terms
${f}=S_{\theta_0}{f}^a$ and ${g}=0$. Thanks to (\ref{66}) the assumption (\ref{37}) is satisfied
(recall that $K_0=2C_*$). Applying (\ref{38}), we get the estimate
\[
\|\delta\dot{{U}}_0\|_{H^s(\Omega_T)}+\|\delta \varphi_0\|_{H^s(\partial\Omega_T)}\leq C\|S_{\theta_0}{f}^a\|_{H^{s+1}(\Omega_T)}.
\]
Together with (\ref{69}) and formula (\ref{76}) this estimate yields
\[
\|\delta{{U}}_0\|_{H^s(\Omega_T)}+\|\delta \varphi_0\|_{H^s(\partial\Omega_T)}\leq C\|S_{\theta_0}{f}^a\|_{H^{s+1}(\Omega_T)}
\leq 
C\theta_0^{(s-\alpha)_+}\delta_0(T)\leq \delta\theta_0^{s-\alpha -1}\Delta_0
\]
for all integer $s\in [3,\tilde{\alpha}]$, provided that $T$ is sufficiently small. Likewise, points $b)$ and $c)$ of $(H_0)$ can be shown to be satisfied for a sufficiently short time $T>0$. \hspace{0.4cm}{\scriptsize $\Box$}
\end{proof}

\paragraph{The proof of Theorem \ref{t2}.}
We consider initial data $({U}_0,\varphi_0)\in H^{m+7}(\mathbb{R}^3_+)\times H^{m+7}(\mathbb{R}^2)$ satisfying all the assumptions of Theorem \ref{t2}. In particular, they satisfy the compatibility conditions up to order $\mu=m+7$ (see Definition \ref{d1}). Then, thanks to Lemmas \ref{l1} and \ref{l2} we can construct an approximate solution $({U}^{a},\varphi^a)\in H^{m +8}(\Omega_T)\times H^{m +8}(\partial\Omega_T)$ that satisfies (\ref{66}). As follows from Lemmas \ref{l11}--\ref{l13}, $(H_n)$ holds for all integer $n\geq 0$, provided that $\alpha \geq 7$, $\tilde{\alpha}=\alpha +4$, the constant $\theta_0\geq 1$ is large enough, and the time $T>0$ and the constant $\delta >0$ are small enough.  In particular, ($H_n$) implies
\[
\sum_{n=0}^{\infty}\left\{ \|\delta{U}_n\|_{H^m(\Omega_T)} +\|\delta \varphi_n\|_{H^m(\partial\Omega_T)}\right\} \leq \infty.
\]
Hence, the sequence $({U}_n,\varphi_n)$ converges in $H^{m}(\Omega_T)\times H^{m}(\partial\Omega_T)$ to some
limit $({U} ,\varphi )$. Recall that $m=\alpha -1 \geq 6$. Passing to the limit in (\ref{193}) and (\ref{194}) with
$s=m$, we obtain (\ref{69})--(\ref{71}). Consequently, ${U} :={U} +{U}^a$, $\varphi := \varphi +\varphi^a$ is a solution of problem \eqref{newL}, \eqref{14}, \eqref{15}. This completes the proof of Theorem \ref{t2}.

\section{Free boundary problem in relativistic gas dynamics: special and general relativity}
\label{s6}

Let us first write down a suitable symmetric form of the relativistic Euler equations. First of all, we note that for the set of covariant laws \eqref{9} we have the supplementary covariant law
\begin{equation}
\nabla_{\alpha} (\rho S u^{\alpha})=0
\label{6.1}
\end{equation}
that arises as a consequence of \eqref{9} and the first principle of thermodynamics. In the setting of special relativity \eqref{6.1} becomes the entropy conservation law 
\begin{equation}
\partial_t (\rho \Gamma S) +{\rm div}\,(\rho Su) =0.
\label{6.2}
\end{equation}
In principle, taking into account \eqref{6.2} and using Godunov's symmetrization method, we can rewrite system \eqref{SR1}--\eqref{SR3} for the unknown $U=(p,u,S)$ as a symmetric system for a new (canonical) unknown 
${\cal Q}$ and then return to the original unknown $U$ keeping the symmetry property:
\begin{equation}
A^0(U)\partial_tU+A^j(U)\partial_jU +Q(U)=0,
\label{6.3}
\end{equation}
where $A^{\alpha}=(A^{\alpha})^{\sf T}$, $\partial_j=\partial /\partial x^j$, and $Q(U)= -(0,-\rho{\cal G},0)$. This procedure is described in \cite{BThand} where the symmetric matrices $A^{\alpha}$ were written for the special case $u^2=u^3=0$. Such a procedure is absolutely algorithmic and always works, but it is however connected with very long calculations. Therefore,
here we prefer to symmetrize the conservation laws \eqref{SR1}--\eqref{SR3} by rewriting them in a suitable nonconservative form.

Equations \eqref{SR1} and \eqref{6.2} imply
\begin{equation}
\frac{{\rm d} S}{{\rm d} t} =0,
\label{6.4}
\end{equation}
where ${\rm d} /{\rm d} t =\partial_t + (v,\nabla )$ is the material derivative as for the non-relativistic case \eqref{4}. Using \eqref{6.4}, we first rewrite \eqref{SR1} in a nonconservative form. Combining then \eqref{SR2} and
\eqref{SR3} and employing again \eqref{6.4}, we finally get the relativistic counterpart of system \eqref{4}:
\begin{equation}
\begin{array}{l}
{\displaystyle 
\frac{\Gamma}{\rho c^2}\,\frac{{\rm d} p}{{\rm d} t} +(v,\partial_tu) +{\rm div}\, u =0,}\\[12pt]
{\displaystyle 
(\rho h\Gamma )\left ( \frac{{\rm d} u}{{\rm d} t} -v \left( v,\frac{{\rm d} u}{{\rm d} t} \right)\right)
+(\partial_t p)v +\nabla p =\rho{\cal G},}\\[12pt]
{\displaystyle 
\frac{{\rm d} S}{{\rm d} t} =0,}
\end{array}\label{6.5}
\end{equation}
where  $c=\left(p_\rho(\rho ,S)\right)^{1/2}$. System \eqref{6.5} being written in the 
quasilinear form \eqref{6.3} is already symmetric with 
\begin{equation}
A^0= \left( \begin{array}{ccc}
{\displaystyle \frac{\Gamma}{\rho c^2} } & v^{\sf T} & 0 \\  v & \rho h\Gamma \mathscr{B} & 0 \\
0 & 0 & 1 \end{array}
\right),\qquad  A^{j}= \left( \begin{array}{ccc}
{\displaystyle \frac{u^j}{\rho c^2} } & e_j^{\sf T} & 0 \\  e_j & \rho hu^j  \mathscr{B} & 0 \\
0 & 0 & v^j \end{array}
\right),\label{6.6}
\end{equation}
where 
$\mathscr{B}=(b_{ij}),\quad b_{ij}= \delta_{ij}-v^iv^j,\quad e_j= (\delta_{1j},\delta_{2j},\delta_{3j})$,
and $a^{\sf T}$ is the vector-row for a corresponding vector-column $a$ (recall also that $u^j=\Gamma v^j$).
The matrix $A_0>0$ provided that inequalities \eqref{6} are satisfied together with the {\it relativistic causality} condition 
\begin{equation}
0<c_s^2<1,
\label{6.7}
\end{equation}
where $c_s$ is the relativistic speed of sound, $c_s^2=c^2/h$. Of course, \eqref{6.7} will be an additional restriction on the initial data in a counterpart of Theorem \ref{t2}.

Now, for system \eqref{6.3}, \eqref{6.6} in the domain \eqref{10} endowed with the boundary conditions \eqref{11} we can 
literally repeat arguments of Sections \ref{s3}--\ref{s5}. The only important point is that the boundary matrix ${\cal A}_1$ on the boundary $x_1=0$ for system \eqref{29} written now for matrices \eqref{6.6} and $V=(\dot{p},\dot{u}_n,\dot{u}^2,\dot{u}^3,\dot{S})$ coincides with the matrix ${\cal A}_1|_{x_1=0}$ in \eqref{30}, where
\begin{equation}
\dot{u}_n:=\widehat{\Gamma}\dot{v}_n,\quad \dot{v}_n=\dot{v}^1-\dot{v}^2\partial_2\widehat{\Psi}-\dot{v}^3\partial_3\widehat{\Psi},\quad
\widehat{\Gamma}=(1+|\hat{u}|^2)^{1/2},\quad \hat{v}=\hat{u}/\widehat{\Gamma},\label{.1}
\end{equation}
$\dot{U}=(\dot{p},\dot{u},\dot{S})$ is the ``good unknown'',  $\widehat{U}=(\hat{p},\hat{u},\widehat{S})$ is the basic state, and $\dot{v}=(\dot{v}^1,\dot{v}^2,\dot{v}^3)$ is defined from the formula
\begin{equation}
\dot{u}=\widehat{\Gamma}\dot{v}+\widehat{\Gamma}\hat{u}(\hat{u},\dot{v})
\label{.2}
\end{equation}
suggested by the relation between the perturbations $\delta u$ and $\delta v$. 

Indeed, we easily compute:
\[
\widetilde{A}_1(\widehat{U},\widehat{\Psi})=\frac{1}{\partial_1\widehat{\Phi}}
\left(
\begin{array}{ccccc} 
{\displaystyle \frac{\widehat{\Gamma}\hat{\mathfrak{f}}}{\hat{\rho} \hat{c}^2}}& \hat{a}^{\sf T}&0\\[6pt]
 \hat{a} & \hat{\rho} \hat{h}\widehat{\Gamma} \hat{\mathfrak{f}}\widehat{\mathscr{B}}  &0\\
0&0&\hat{\mathfrak{f}}
\end{array}\right),
\]
where
\[
\hat{a}= (1-\hat{v}_1\partial_t\widehat{\Psi}, -\partial_2\widehat{\Psi} -\hat{v}_2\partial_t\widehat{\Psi}, -\partial_3\widehat{\Psi}-\hat{v}_3\partial_t\widehat{\Psi}),\quad \hat{\mathfrak{f}}=\hat{v}^1-\hat{v}^2\partial_2\widehat{\Psi}-\hat{v}^3\partial_3\widehat{\Psi}
-\partial_t\widehat{\Psi},
\]
and $\widehat{\mathscr{B}}$ is the matrix $\mathscr{B}$ calculated for the basic state. Taking into account \eqref{20},
\eqref{.1}, and \eqref{.2}, we have $\hat{\mathfrak{f}}|_{x_1=0}=0$ and
\[
(\partial_1\widehat{\Phi})\,( \widetilde{A}_1(\widehat{U},\widehat{\Psi})\dot{U},\dot{U} )|_{x_1=0}=2\left.\dot{p}|_{x_1=0}\left(
\dot{u}_1-\dot{u}^2\partial_2\widehat{\Psi}-\dot{u}^3\partial_3\widehat{\Psi}-(\hat{v},\dot{u})\partial_t\hat{\varphi}\right)\right|_{x_1=0}
\]
\[
=2\left.\dot{p}|_{x_1=0}\left( \widehat{\Gamma}\dot{v}_n+(\hat{u},\dot{v})\partial_t\hat{\varphi}(\widehat{\Gamma}^2-1-|\hat{u}|^2)\right)\right|_{x_1=0}=2(\dot{p}\dot{u}_n)|_{x_1=0}=({\cal A}_{(1)}V|_{x_1=0},V|_{x_1=0})
\]
(the matrix ${\cal A}_{(1)}$ was defined in \eqref{30}).
Then
\[
( \widetilde{A}_1(\widehat{U},\widehat{\Psi})\dot{U},\dot{U} )|_{x_1=0}=
( \widetilde{A}_1(\widehat{U},\widehat{\Psi})JV,JV )|_{x_1=0}=( J^{\sf T}\widetilde{A}_1(\widehat{U},\widehat{\Psi})JV,V )|_{x_1=0}=
({\cal A}_1V,V)|_{x_1=0},
\]
where the matrix ${\cal A}_1|_{x_1=0}$ is the same as in Section \ref{s3} and the transition matrix $J$ can be easily written down.
Thus, we obtain the local-in-time existence (and uniqueness) theorem for the relativistic version of problem (\ref{13})--(\ref{15}) (in the framework of special relativity) in the form of Theorem \ref{t2}. Clearly, we should also
supplement conditions \eqref{6} with \eqref{6.7} while writing assumptions on the initial data. It means that the initial data should satisfy 
\[
\inf_{x\in\mathbb{R}^3_+}\left\{ \rho (p_0,S_0), \rho_p (p_0,S_0), c_s^2 (p_0,S_0), 1- c_s^2 (p_0,S_0)\right\}  >0,
\]
where
\[
c_s^2 (p_0,S_0)= \frac{1}{\rho_p (p_0,S_0)h(p_0,S_0)},\qquad h(p_0,S_0)=1+e(\rho (p_0,S_0),S_0) +\frac{p_0}{\rho (p_0,S_0)}.
\]

Let us now briefly discuss the case of general relativity. The metric $g$ appearing in the relativistic Euler equations \eqref{GR} should satisfy the Einstein equations $G_{\alpha\beta}= \kappa T_{\alpha\beta}$. As in \cite{FR}, following Rendall \cite{Rend} and introducing 
\[
g_{\alpha\beta\gamma}:=\partial_{\gamma}g_{\alpha\beta},
\]
we write the Einstein equations in {\it harmonic coordinates} as
\begin{equation}
\begin{array}{l}
-g^{00}\partial_t g_{\alpha\beta 0}-2g^{0i}\partial_i g_{\alpha\beta 0}-g^{ij}\partial_i g_{\alpha\beta j}+
2H_{\alpha\beta}(g_{\gamma\delta},g_{\gamma\delta\sigma} )=\kappa (2T_{\alpha\beta}-T^{\gamma}_{\gamma}g_{\alpha\beta}),\\
g^{ij}\partial_tg_{\alpha\beta i}-g^{ij}\partial_i g_{\alpha\beta i}=0,\\ \partial_tg_{\alpha\beta}-g_{\alpha\beta 0}=0.
\end{array}\label{6.8}
\end{equation}
System \eqref{6.8} written in the compact form
\begin{equation}
B^0(W) \partial_tW+B^j(W)\partial_jW + Q(W,U)=0\label{6.9}
\end{equation}
is symmetric for the vector $W$ whose components are $g_{\alpha\beta}$ and $g_{\alpha\beta\gamma}$. Recall that $U=(p,u,S)$. The symmetric system \eqref{6.9} is hyperbolic if $g^{00}<0$ and $(g^{ij})>0$.

Regarding the relativistic Euler equations \eqref{GR}, it is enough to symmetrize them for a fixed constant metric $g$.
This was done by Rendall \cite{Rend} for isentropic fluids. In the general case we can however just repeat arguments from \cite{Rend} by taking into account the entropy law \eqref{6.2} which has form \eqref{6.4} for constant metrics.
Roughly speaking, the calculations in \cite{Rend} are just a ``tensor'' variant of our simple calculations towards obtaining the nonconservative form \eqref{6.5}. With reference to \cite{Rend}, we write equations \eqref{GR} for a fixed constant metric $g$ in the symmetric form \eqref{6.3}, \eqref{6.6} with
\[
\mathscr{B}=(b_{ij}),\quad b_{ij}=g_{ij}+g_{0i}v^j+g_{0j}v^i+g_{00}v^iv^j=g_{ij}+g_{0i}\frac{u^j}{u^0}+g_{0j}\frac{u^i}{u^0}+g_{00}
\frac{u^iu^j}{(u^0)^2}.
\]
For a non-fixed metric $g$ the balance laws \eqref{GR} are written as the symmetric system 
\begin{equation}
A^0(U)\partial_tU+A^j(U)\partial_jU +B(U,W)=0.
\label{6.10}
\end{equation}
It is worth noting that for system \eqref{6.10} for any fixed (and not necessarily constant) metric we can prove a counterpart of Theorem \ref{t2} under suitable assumptions on $W$.

Now we consider the free boundary problem for the symmetric hyperbolic system \eqref{6.10}, \eqref{6.9} with the boundary conditions \eqref{11}. However, in the setting of general relativity it is actually an interface problem because we should consider system \eqref{6.9} for the metric variables not only in the domain $\Omega (t)$ but also in the vacuum region $\mathbb{R}^3\backslash \Omega(t)= \{ x^1 <\varphi (t,x^2,x^3)\}$. As was shown in \cite{Isr}, the jump conditions on an interface $\Sigma (t)$ written  for the Einstein tensor are satisfied if the metric $g$ is smooth
on this interface, i.e,
\begin{equation}
[W]=W^+-W^-=0 \qquad \mbox{on}\quad \Sigma (t).
\label{6.11}
\end{equation}
In our case $W^+$ and $W^-$ are the metric variables in the fluid domain $\Omega (t)$ and the vacuum region $\mathbb{R}^3\backslash \Omega(t)$ respectively. Constraints on the initial data under which condition \eqref{6.11} is not only sufficient but also necessary for the fulfillment of the jump conditions for the Einstein tensor are discussed in \cite{FR} and connected with the notion of so-called {\it natural coordinates} \cite{Isr}. That is, as for shock waves in general relativity studied in \cite{FR}, we will treat our problem in {\it harmonic natural coordinates}.

Thus, we have the symmetric hyperbolic systems
\begin{align}
& A^0(U)\partial_tU+A^j(U)\partial_jU +B(U,W^+)=0\quad \mbox{in}\ \Omega (t),\label{6.12}\\
& B^0(W^+) \partial_tW^++B^j(W^+)\partial_jW^+ + Q(W^+,U)=0\quad \mbox{in}\ \Omega (t),\label{6.13}\\
& B^0(W^-) \partial_tW^-+B^j(W^+)\partial_jW^- + Q(W^-,0)=0\quad \mbox{in}\ \mathbb{R}^3\backslash\Omega (t)\label{6.14}
\end{align}
endowed with the boundary conditions \eqref{11} and \eqref{6.11} on a time-like hypersurface $\Sigma (t)=\{x^1=\varphi (t,x^2,x^3)\}$. Here \eqref{6.14} is the symmetric form of the vacuum Einstein equations.
We reduce problem \eqref{6.12}--\eqref{6.14}, \eqref{11}, \eqref{6.11} to the fixed domain $\mathbb{R}^3_+$ by straightening the free surface $\Sigma$:
\[
\widetilde{U}(t,x ):= U (t,\Phi^+ (t, x),x'),\quad \widetilde{W}^{\pm}:=W^{\pm} (t,\Phi^{\pm} (t, x),x')
\]
\[
\Phi^{\pm}(t,x ):= \pm x^1+\Psi^{\pm}(t,x ),\quad \Psi^{\pm}(t,x ):= \chi (\pm x_1)\varphi (t,x'),\quad x'=(x^2,x^2)
\]
(the cut-off function $\chi (x_1)$ was described in the beginning of Section \ref{s3}). 

Regarding further arguments towards the proof of the local-in-time existence theorem for the reduced problem in the domain $\mathbb{R}^3_+$, we give here only a {\it rough scheme} or even an idea of this proof and postpone detailed arguments to a future work. The main idea is the following. The existence of solutions of problem \eqref{6.12}, \eqref{11} reduced to the fixed domain $\mathbb{R}^3_+$ is proved by Nash-Moser iterations for any fixed metric $g$. The boundary conditions 
\eqref{6.11} are linear and, therefore, we do not need introduce source terms for them in the linearized problem. 
Moreover, for the linearized problem these boundary conditions are {\it dissipative}. Though, they are not strictly dissipative, but the crucial point is that they are homogeneous. Hence, we can prove the existence of solutions to the reduced problem for \eqref{6.13}, \eqref{6.14}, \eqref{6.11} in $[0,T]\times\mathbb{R}^3_+$ by the classical fixed-point argument for any fixed fluid unknown $U$. Then, the existence of solutions to the whole problem \eqref{6.12}--\eqref{6.14}, \eqref{11}, \eqref{6.11}
reduced to the fixed domain $\mathbb{R}^3_+$ is proved by Nash-Moser iterations for the ``fluid'' part of the problem
whereas at each Nash-Moser iteration step the metric $g$ is found as a solution of the problem whose linear version has maximally dissipative boundary conditions. More presicely, at each $(n+1)$th iteration step before solving the linear problem for $\delta\dot{U}_n$  with $W^+=W^+_n$ we find $W^{\pm}_n$ as a unique solution of the corresponding problem for $W^{\pm}$ with $U=U_n$ and $\varphi=\varphi_n$ taken from the $n$th iteration step. 
At last, we note that the constraints \cite{FR} on the initial data connected with the introduction of natural coordinates  are not needed to be satisfied at each Nash-Moser iteration step and we may therefore not care about them.

\paragraph{Acknowledgements}
The principal part of this work was done during the short stay of the author at the the Department of Mathematics 
and Statistics of the University of Konstanz. The author gratefully thanks Heinrich Freist\"uhler
for his kind hospitality and many helpful discussions during this visit.

\end{document}